\theoremstyle{plain}
\newtheorem{theorem}{Theorem}[section]
\newtheorem{corollary}[theorem]{Corollary}
\newtheorem{definition}[theorem]{Definition}
\newtheorem{lemma}[theorem]{Lemma}
\newtheorem{assumption}[theorem]{Assumption}
\newtheorem{remark}[theorem]{Remark}
\numberwithin{equation}{section}
\newcommand{\bb}[1]{\mathbb{#1}}
\newcommand{\bbt}{\bb{T}}
\newcommand{\bu}{\mathbf{u}}
\newcommand{\bv}{\mathbf{v}}
\newcommand{\bg}{\mathbf{g}}
\renewcommand{\d}{\mathrm{d}}
\newcommand{\dx}{\d x}
\newcommand{\dxdt}{\,\d x \d t}
\newcommand{\dt}{\d t}
\newcommand{\dtau}{\,\d\tau}
\newcommand{\abs}[1]{\left\vert #1 \right \vert}
\newcommand{\absm}[1]{\vert #1 \vert}
\newcommand{\norm}[1]{\left\Vert #1 \right \Vert}
\newcommand{\inner}[2]{\left\langle #1 , #2 \right\rangle} 
\title[Quasi-incompressible Navier--Stokes/Cahn--Hilliard]{Weak solutions and incompressible limit of a quasi-incompressible Navier--Stokes/Cahn--Hilliard model for viscous two-phase flows}
\author{Mingwen Fei}
\address{School of  Mathematics and Statistics, Anhui Normal University, Wuhu 241002, P. R. China}
\email{mwfei@ahnu.edu.cn}
\author{Xiang Fei}
\address{School of  Mathematics and Statistics, Anhui Normal University, Wuhu 241002, P. R. China}
\email{feixiang@ahnu.edu.cn}
\author{Daozhi Han}
\address{Department of Mathematics, The State University of New York at Buffalo, Buffalo, NY 14260 USA}
\email{daozhiha@buffalo.edu}
\author{Yadong Liu}
\address{School of Mathematical Sciences and Ministry of Education Key Laboratory of NSLSCS, Nanjing Normal University, Nanjing 210023, P. R. China}
\email{ydliu@njnu.edu.cn}
\date{\today}
\subjclass[2020]{Primary:
	35Q35; 
	Secondary:
	76T06, 
	76T99, 
	35D30, 
	35B25, 
	35Q30
}
\keywords{Navier--Stokes/Cahn--Hilliard, quasi-incompressible, two-phase flows, weak solutions, incompressible limit, relative entropy method}
\begin{document}
	\begin{abstract}
		We study a quasi-incompressible Navier--Stokes/Cahn--Hilliard coupled system which describes the motion of two macroscopically immiscible incompressible viscous fluids with partial mixing in a small interfacial region and long-range interactions. The case of unmatched densities with mass-averaged velocity is considered so that the velocity field is no longer divergence-free, and the pressure enters the equation of the chemical potential.
		We first prove the existence of global weak solutions to the model in a three-dimensional periodic domain, for which the implicit time discretization together with a fixed-point argument to the approximate system is employed. In particular, we obtain a new regularity estimate of the order parameter by exploiting the partial damping effect of the capillary force. Then utilizing the relative entropy method, we establish the incompressible limit---the quasi-incompressible two-phase model converges to model H as the density difference tends to zero. Crucial to the passage of the incompressible limit, due to the lack of regularity of the pressure, are some non-standard uniform-in-density difference controls of the pressure, which are derived from the structure of the momentum equations and the improved regularity of the order parameter.
	\end{abstract}
	
	\maketitle
	
	
	\section{Introduction}
	\label{sec:introduction}
	
	In this article, we study a  \textit{diffuse interface model} (also referred to as \textit{phase-field
		model}) for two incompressible, immiscible fluids of different densities and viscosities. Let $T > 0$, $Q_T \coloneqq \bbt^3 \times (0,T)$.
	The model consists of the quasi-incompressible Navier--Stokes equations coupled with the Cahn--Hilliard equation in $Q_T$:
	\begin{subequations}
		\label{model2}
		\begin{align}
			\label{model2-1}
			\partial_t(\rho  \mathbf{u}) + \mathrm{div}\left (\rho  \mathbf{u}\otimes
			\mathbf{u}\right)& =  \mathrm{div}\big(S(\phi,\mathbb{ D}\mathbf{u})\big)-(\nabla p
			+\phi \nabla \mu), \\
			\label{model2-2}
			\mathrm{div}  \mathbf{u} & = \alpha \Delta\mu_p, \\
			\label{model2-3}
			\partial_t\phi + \mathrm{div}(\phi  \mathbf{u})& =\Delta\mu_p, \\
			\label{model2-4}
			\mu& =F'(\phi)+\Lambda^{2s} \phi, \\
			\label{model2-5}
			\mu_p& =\mu+\alpha p, \\  
			\label{model2-6}\rho& =\rho(\phi)=\frac{\varepsilon}{2}\phi+\frac{\varepsilon}{2}+1,
		\end{align}
		with the initial conditions
		\begin{align}
			\label{model2-5-0}
			\mathbf{u}|_{t=0} =\mathbf{u}_{0},\ \phi|_{t=0}=\phi_{0}, \quad\text{in $\bbt^3$}.
		\end{align}
	\end{subequations}
	Here the unknowns $(\phi,\mathbf{u},p,\mu): Q_T \to \mathbb{R} \times \mathbb{R}^3 \times \mathbb{R} \times \mathbb{R}$ are the order parameter (volume fraction difference), the mass-averaged (barycentric) mixture velocity, the pressure and the chemical potential, respectively. The function $F$ denotes the homogeneous Helmholtz free energy of double-well type. 
	The constant $\varepsilon\coloneqq \frac{\rho_2}{\rho_1} - 1 $ measures the relative difference of the two densities $ \rho_1, \rho_2$. Without loss of generality, $\varepsilon$ is assumed to be negative, i.e., $\rho_2 < \rho_1$. Then
	\begin{align*}
		\alpha\coloneqq \frac{\rho_1 - \rho_2}{\rho_2 + \rho_1} = - \frac{\varepsilon}{2+\varepsilon} \in (0,1).
	\end{align*}
	The matrix
	\begin{align*}
		S(\phi,\mathbb{ D}\mathbf{u})\coloneqq2\eta(\phi) \mathbb{ D}\mathbf{u}-\frac{2}{3}\eta(\phi) (\mathrm{div}  \mathbf{u}) \mathbf{I}
	\end{align*}
	denotes the Newtonian stress tensor with the symmetric gradient $\mathbb{D}\mathbf{u} \coloneqq \frac12(\nabla  \mathbf{u}+(\nabla \mathbf{u})^\top)$, where
	\begin{align*}
		\eta(\phi)=\frac{\nu-1}{2}\phi+\frac{\nu+1}{2}
	\end{align*}
	is the viscosity of the mixture and
	$\nu>0$ stands for the constant viscosity ratio of the two fluids.
	
	In \eqref{model2-4}, for $s > 0$  the fractional Laplacian operator in $\bbt^3$ is defined by
	\begin{align*}
		\Lambda^{2s}\phi(x)=(-\Delta)^{s}\phi(x):=\sum_{k\in \mathbb{Z}^3} |k|^{2s}\widehat{\phi}(k)e^{ix\cdot k}
	\end{align*}
	with
	$$
	\widehat{\phi}(k)=\int_{\mathbb{T}^{3}}e^{-ix\cdot k}f(x)\d x.
	$$
	By direct computation one has 
	\begin{align*}
		\widehat{\Lambda^{2s}\phi}(k)=|k|^{2s}\widehat{\phi}(k),\ k\in \mathbb{Z}^3.
	\end{align*}
	
	We refer to \cite{GCLLJ,SYW,SRSvBvdZ2018,MKID} for the derivation of such models when $s=1$. In particular, if  $\rho_1 = \rho_2 $, the quasi-incompressible model \eqref{model2} reduces to the classical \textit{model H} \cite{GPV1996,HH} for two-phase flows of matched densities.

	The total energy $E$ of \eqref{model2} is defined as
	\begin{align}
		\label{Etot}
		E(\mathbf{u}, \phi)
		=\int_{\mathbb{T}^{3}}\frac{1}{2}\rho|\mathbf{u}|^2\, \mathrm{d} x+\int_{\mathbb{T}^{3}} \left( F(\phi)+\frac{1}{2}|\Lambda^{s} \phi|^2\right)\, \mathrm dx.
	\end{align}
	Then smooth solutions of \eqref{model2} fulfill the following energy dissipation law
	\begin{align}\label{ConEnL}
		\frac{\mathrm d}{\mathrm dt}E(\mathbf{u}, \phi)
		=
		-\int_{\mathbb{T}^{3}} \left( 2 \eta(\phi)
		\mathbb{D}\mathbf{u}:\mathbb{ D}\mathbf{u}-\frac{2}{3}\eta(\phi) (\mathrm{div} \mathbf{u})^2\right)\, \mathrm{d} x
		-\int_{\mathbb{T}^{3}} |\nabla \mu_p|^2\, \mathrm{d} x,
	\end{align}
	where one recalls the fact 
	$
	\mathbb{ D}\mathbf{u}:\mathbb{ D}\mathbf{u}-\frac{1}{3}(\mathrm{div} \mathbf{u})^2
	\geq 0.
	$

	Note that \eqref{model2-2} and \eqref{model2-6} imply
	\begin{align*}
		\partial_t\phi + \mathrm{div}(\phi  \mathbf{u})=\frac{1}{\alpha} \mathrm{div}\mathbf{u}.
	\end{align*}
	Recalling the definition of $\rho$ in \eqref{model2-6} and $\alpha=-\frac{\varepsilon}{2+\varepsilon}$, one recovers  the continuity equation (conservation of mass) \cite{AH,SRSvBvdZ2018}
	\begin{align*}
		\partial_t \rho+\mathrm{div} (\rho \mathbf{u})=0.
	\end{align*}

	\subsection{Literature review and current contribution}
	\label{sec:literature-review}
	
	Diffuse interface fluid models assume a thin diffusive layer of finite thickness separating two fluids inside which the two fluids mix due to chemical diffusion. Because they allow for smooth interface topological transitions (such as bubble merging or splitting), diffuse interface models have become an important tool in the study of interfacial dynamics of multiphase flows.  For incompressible two-phase flows of nearly matched densities, the model is known as Model H \cite{GPV1996, HH} taking the form of the incompressible Navier--Stokes/Cahn--Hilliard equations. There is a vast literature dedicated to the analysis and simulations of Model H which we refrain from reviewing. In the case of unmatched densities, Lowengrub and Truskinovsky \cite{LT1998} derived a
	quasi-incompressible Navier--Stokes/Cahn--Hilliard system, utilizing the concentration as the order parameter with the density present in the Cahn--Hilliard equation. In \cite{SYW} the volume fraction is adopted as the order parameter, which greatly simplified the Cahn--Hilliard equation. Recently, Shokrpour Roudbari et al. \cite{SRSvBvdZ2018} re-derived the quasi-incompressible  Navier--Stokes/Cahn--Hilliard model, i.e. model \eqref{model2} with $s=1$, based on the mixture theory and the Coleman-Noll procedure. In  \cite{MKID}, a unified derivation and comparison of the known diffuse interface models were explored from the physical point of view. It is also possible to derive solenoidal phase field fluid models by volume-averaging the velocity, cf. \cite{AGG2012,AGW2025,ADGC,Boyer2002,DSS2007} and \cite{ADG,AGG2024,FS1,GMW,Giorgini2021} for relevant models and analytical results. Finally for a diffuse interface model for compressible fluids, Abels and Feireisl \cite{AF2008} constructed weak solutions globally in time, and the low Mach number limit is justified in \cite{ALN2024} by the relative entropy method.

	In order to describe the long-range interactions among particles for the phase transition phenomenon, nonlocal versions of Cahn--Hilliard equation were employed extensively. One motivation for considering a nonlocal Cahn--Hilliard equation comes from its original physical formulation \cite{CH1958}. In that setting, the Laplace operator appearing in the chemical potential was initially replaced by a spatial convolution term specifically designed to capture long-range particle interactions.
	From this perspective, it seems to be more faithful to the underlying physical phenomena to employ the fractional Laplacian $(-\Delta)^s \phi$, which can be exactly represented as a convolution integral for smooth functions.
	For the Cahn--Hilliard equation with the fractional Laplacian $(-\Delta)^s \mu$ and  $(-\Delta)^{\sigma}\phi$, Akagi, Schimperna and Segatti \cite{ASS}
	showed the existence and uniqueness of weak solutions and long-time behavior of solutions in the case that  $0<s,\sigma<1$, and the singular limits as either $s$ or $\sigma$ tends to zero. Ainsworth and Mao \cite{AM} later investigated the global weak solutions of the fractional Cahn--Hilliard equation when $s=1$ and $\frac{1}{2}<\sigma\leq1$.
	Readers are also referred to other related modelling and analytical results (mostly with spatial convolutions) on nonlocal Cahn--Hilliard equation \cite{ABG,BH,EPPS2024,GGG,Giorgini2024,MR,Poiatti2025}, and nonlocal Navier--Stokes/Cahn--Hilliard system \cite{FS1,FS,FG1,FGG,FGK,HKP2024}.
	
	The major difficulty associated with the quasi-incompressible model \eqref{model2}, compared to Model H, lies in the fact that the velocity field is no longer divergence-free and the pressure enters the chemical potential. As observed in \cite{AH}, the low temporal regularity of the pressure prevents further regularity gains of the order parameter. In particular the usage of the logarithmic potential could not be justified. For weak solutions of the model by Lowengrub and Truskinovsky, Abels \cite{AH} employed the $p$-Laplacian with a slightly modified potential to ensure that the order parameter does not deviate from its physical bounds too much. Existence of local strong solution is established in \cite{AH1}. 
	
	It is noted that once the densities of two fluids are matched, the model reduces to the classical model H. This is also observed formally in \cite{AH}. However, the limit passage from quasi-incompressible models to Model H is once again out of reach due to the lack of regularities mentioned above. A related quasi-incompressible model derived from mixture theory was investigated by Feireisl, Lu and M\'alek \cite{FLM}, where they studied the incompressible limit with the same parameter as our ratio $\alpha = \frac{\rho_2 - \rho_1}{\rho_2 + \rho_1}$, see \cite[(2.13)]{FLM}. Moreover, Giorgini \cite{Giorgini2021} showed a stability result for the strong solutions to the Abels--Garcke--Gr\"un model derived in \cite{AGG2012} and model H \cite{HH} in terms of the density values, which provides a convergence result for the diffuse interface model with volume-averaged velocity. It is essential that in our case, pressure enters the chemical potential equation \eqref{model3-5}, demanding additional estimates for it. Finally, for diffuse interface models, the incompressible limit results are quite limited. Here we complement with two known results for two-phase flows by Feireisl, Petcu and Pra\v{z}\'ak \cite{FPP} and Abels, Liu and Ne\v{c}asov\'a \cite{ALN2024}, where the former one justified the incompressible limit of a compressible Navier--Stokes/Allen--Cahn model with low Mach number via the relative entropy method, and the latter one addressed a compressible Navier--Stokes/Cahn--Hilliard counterpart.
	
	The purpose of current contribution is to study the existence of weak solutions to the quasi-incompressible two-phase flow model \eqref{model2} in $\bbt^3$. In particular, we aim at generalizing the analysis obtained in \cite{AH} with the $p$-Laplacian to the current fractional Laplacian setting. Moreover, we are going to present a first convergence result of the incompressible limit for such quasi-incompressible models for two-phase flows with small density differences.
	
	\subsection{Main results and ideas}
	For the convenience of analysis, it is better to reformulate the system \eqref{model2} by suppressing the mean value of the pressure and chemical potential, see also \cite{AH}.
	Defining $\mu_p^{0}\coloneqq\mu_p-\overline{\mu_p}$ and $p_0\coloneqq p-\overline{p}$  where $\overline{f}\coloneqq\frac{1}{\absm{\bbt^3}}\int_{\bbt^3} f \,\mathrm{d}x$,
	we next focus on the following equivalent form of  \eqref{model2} in $Q_T$
	\begin{subequations}
		\label{model3}
		\begin{align}
			\label{model3-1}
			\partial_t(\rho  \mathbf{u}) + \mathrm{div}\left (\rho  \mathbf{u}\otimes
			\mathbf{u}\right) & = \mathrm{div}\big(S(\phi,\mathbb{ D}\mathbf{u})\big) -\zeta\rho\nabla p_{0}
			- \phi \nabla \mu_{p}^{0}, \\
			\label{model3-2}
			\partial_t \rho+\mathrm{div} (\rho \mathbf{u})& =0, \\
			\label{model3-3}
			\partial_t\phi + \mathrm{div}(\phi  \mathbf{u})& =\Delta\mu_p^{0}, \\
			\label{model3-4}
			\mu& =F'(\phi)+\Lambda^{2s} \phi , \\
			\label{model3-5}
			\mu_p^{0}+\overline{\mu}& =\mu+\alpha p_{0}, \\
			\rho & =\frac{\varepsilon}{2}\phi+\frac{\varepsilon}{2}+1, 
		\end{align}
		with \begin{align}
			\label{model3-5-0}
			\mathbf{u}|_{t=0} =\mathbf{u}_{0},\ \phi|_{t=0}=\phi_{0}, \quad\text{in $\bbt^3$},
		\end{align}
	\end{subequations}
	where $\zeta=\frac{2}{2+\varepsilon}=1+\alpha$. The energy identity is the same as \eqref{ConEnL}
	\begin{align}\label{ConEnL1}
		\frac{\mathrm d}{\mathrm dt}E(\mathbf{u}, \phi)
		=
		-\int_{\mathbb{T}^{3}} \left( 2 \eta(\phi)
		\mathbb{ D}(\mathbf{u}):\mathbb{ D}(\mathbf{u})-\frac{2}{3}\eta(\phi) (\mathrm{div} \mathbf{u})^2\right)\, \mathrm{d} x
		-\int_{\mathbb{T}^{3}} |\nabla \mu_p^{0}|^2\, \mathrm{d} x,
	\end{align}
	where the total energy $E$ is the same as \eqref{Etot}.
	
	Before stating the main result, let us present the assumption which will be employed through the context.
	\begin{assumption}\label{ass:main}
		We assume that $\eta\in C^{2}(\mathbb{R})$ satisfying 
		$0<\inf\eta\leq \eta\leq \sup \eta<\infty$ and $\eta'\in L^\infty(\mathbb{R})$. Moreover, $F(\phi)=\Phi(\phi)-\frac{\kappa}{2}\phi^{2}$ where $\Phi \in C^{3}(\mathbb{R}),\Phi''\geq 0$ and $\kappa$ is a positive constant.
	\end{assumption}

	Now we give the definition of the weak solutions. To proceed, we apply the Helmholtz decomposition to $\mathbf{u}$, i.e., we decompose $\mathbf{u}=\mathbb{P}\mathbf{u}+\nabla\big( \mathbb{G}(\mathbf{u})\big)$ where $\mathbb{G}$ is defined by
	\begin{align*}
		&\Delta \mathbb{G}(\mathbf{u})=\mathrm{div}\mathbf{u},\quad \text{in}\ \mathbb{T}^{3}
	\end{align*}
	and $\int_{\mathbb{T}^{3}} \mathbb{G}(\mathbf{u})  \d x=0$. Then we introduce a new pressure denoted by $p_{1}$
	\begin{align}
		p_{1}=\zeta p_{0}+\partial_{t}\mathbb{G}(\mathbf{u}).\label{new pressure-introduction}
	\end{align}
	\begin{definition}
		\label{def:weak-solutions}
		Let $0<T <\infty$ and $ (\bu_0,\phi_0) \in H^1(\bbt^3) \times H^s(\bbt^3) $ with $s>\frac{3}{2}$.
		In addition, let Assumption \ref{ass:main} hold true. We call the quadruple $(\bu,p_0,\phi,\mu_p^0)$ a weak solution to \eqref{model3} with initial data $(\bu_0,\phi_0)$, provided that
		\begin{enumerate}
			\item The quadruple $(\bu,p_0,\phi,\mu_p^0)$ satisfies
			\begin{align*}
				&\mathbf{u}\in BC_{\omega}(0,T; L^{2}(\mathbb{T}^{3}))\cap L^{2}(0,T; H^{1}(\mathbb{T}^{3})),
				\\&p_0\in L^2(0,T; H^{-s}(\mathbb{T}^{3})),
				\\&\phi\in BC_{\omega}(0,T; H^{s}(\mathbb{T}^{3})),
				\\& \mu_{p}^{0}\in L^{2}(0,T; H^{2}_{(0)}(\mathbb{T}^{3})).
			\end{align*}
			\item $p_1\in L^{2}(0,T;L^{r}(\mathbb{T}^{3}))(1<r<\frac{3}{2})$, and for $\boldsymbol{\varphi}\in C_{0}^{\infty}(0,T;H^1(\bbt^3) \cap L^{\infty}(\mathbb{T}^{3})\cap W^{1,r'}(\mathbb{T}^{3})) (\frac{1}{r}+\frac{1}{r'}=1)$, there holds
			\begin{align}\label{mweak1}
				&\nonumber(\mathbb{P}\mathbf{u},\partial_{t}\boldsymbol{\varphi})_{Q_{T}}+(\mathbf{u}\cdot\nabla\mathbf{u},\boldsymbol{\varphi})_{Q_{T}}-(\rho^{-1}S(\phi,\mathbb{ D}\mathbf{u}),\nabla\boldsymbol{\varphi})_{Q_{T}}-(S(\phi,\mathbb{ D}\mathbf{u}),\nabla\rho^{-1}\otimes\boldsymbol{\varphi})_{Q_{T}}
				\\&\qquad
				+(p_{1},\mathrm{div}\boldsymbol{\varphi})_{Q_{T}}-(\rho^{-1}\phi \nabla \mu_{p}^{0},\boldsymbol{\varphi})_{Q_{T}}
				=0.
			\end{align}
			\item For  $\psi\in C_{0}^{\infty}(0,T; H^{s}(\mathbb{T}^{3}))$,
			\begin{align}
				&\label{mweak2}(\rho,\partial_{t}\psi)_{Q_{T}}+(\rho\mathbf{u}, \nabla\psi)_{Q_{T}}=0,
				\\&\label{mweak3}(\phi,\partial_{t}\psi)_{Q_{T}}+(\phi\mathbf{u},\mathrm{div}\psi)_{Q_{T}}=( \nabla \mu_{p}^{0},\nabla\psi)_{Q_{T}},
				\\&\label{mweak4}(\mu_{p}^{0}-\alpha\zeta^{-1} p_{1}-F'(\phi)+\overline{\mu},\psi)_{Q_{T}}-(\zeta^{-1}\mathbb{G}(\mathbf{u}),\partial_{t}\psi)_{Q_{T}}=(\Lambda^{s}\phi,\Lambda^{s}\psi)_{Q_{T}},
			\end{align}
			where $\mathbb{G}(\mathbf{u})\in L^{2}(0,T;H^{1}(\mathbb{T}^{3}))$ and
			\begin{align*}
				(\overline{\mu},\psi)_{Q_{T}}=\frac{1}{\absm{\bbt^3}}\int_0^{T}\int_{\mathbb{T}^{3}}\big(F'(\phi)\psi -\Lambda^{s} \phi\Lambda^{s} \psi\big) dxdt.
			\end{align*}
			\item For a.e.~$t \in (0,T]$, the triple $(\bu,\phi,\mu_p^0)$ satisfies the energy inequality
			\begin{align}
				& E(\mathbf{u}(t), \phi(t))
				+\int_0^{t}\int_{\mathbb{T}^{3}} \left(2 \eta(\phi)
				\mathbb{D}(\mathbf{u}):\mathbb{D}(\mathbf{u})-\frac{2}{3}\eta(\phi) (\mathrm{div} \mathbf{u})^2\right) \,\dx\dtau\nonumber \\
				& \quad
				+\int_0^{t}\int_{\mathbb{T}^{3}}|\nabla \mu_{p}^{0}|^{2}\,\dx\dtau
				\leq E(\mathbf{u}_{0},\phi_{0}), \label{eqs:energy}
			\end{align}
			where the total energy $E(\mathbf{u}, \phi)$ is defined in \eqref{Etot}.
			\item For a.e.~$t \in (0,T]$,  $\int_{\mathbb{T}^{3}}\phi\, \mathrm{d} x=\int_{\mathbb{T}^{3}}\phi_0\, \mathrm{d} x$ 
			and the solution attains the initial value $\mathbf{u}_{0}$ and $\phi_{0}$ in  the following sense respectively,
		\begin{align}
			& \mathbf{u}(0,x)=\mathbf{u}_{0},\  a.e.,\ in \ \mathbb{T}^{3},\label{initial continuty1}
			\\&\phi(0,x)=\phi_{0},\  a.e.,\ in \ \mathbb{T}^{3}.\label{initial continuty2}
		\end{align}
	\end{enumerate}
\end{definition}

We are in the position to state our first main result on existence of weak solutions.
\begin{theorem}\label{thm:main}
	Let $ s>\frac{3}{2}$ and Assumption \ref{ass:main} hold,
	given $ (\bu_0,\phi_0) \in H^1(\bbt^3) \times H^s(\bbt^3) $. Then for any $0 < T < \infty$, there exists a weak solution $(\bu,p_0,\phi,\mu_p^0)$ of \eqref{model3} in the sense of Definition \ref{def:weak-solutions}. Moreover, for $0\leq\gamma\leq 1$ and some $\theta > 0$, it follows
	\begin{align}
		\label{eqs:phi-high}
		& \phi\in L^{2}(0,T; H^{s+\frac{\gamma}{2}}(\mathbb{T}^{3})), \quad \phi \in (-1-\theta, 1+\theta).
	\end{align}
\end{theorem}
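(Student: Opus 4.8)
\emph{Overall strategy and discretization.} The plan is to construct a weak solution via an implicit time discretization (Rothe's method) of the reformulated system \eqref{model3}, to extract bounds uniform in the time step, to pass to the limit, and finally to read off the extra regularity \eqref{eqs:phi-high} from the chemical--potential relation \eqref{model3-4}--\eqref{model3-5} and the structure of the capillary term in \eqref{model3-1}. Concretely, for $N\in\bbn$ set $h=T/N$; given $(\bu^{k-1},\phi^{k-1})$ I would solve the stationary system obtained by replacing $\pt(\rho\bu)$, $\pt\rho$, $\pt\phi$ with backward difference quotients, $\rho^{k}=\rho(\phi^{k})$. For frozen $\phi^{k}$ the block \eqref{model3-2}--\eqref{model3-5} together with the discrete constraint $\Div\bu^{k}=\alpha\Delta\mu_{p}^{k}$ is an elliptic problem for $(\bu^{k},p_{0}^{k},\mu_{p}^{k})$; solving it and then reading \eqref{model3-1} as a mildly regularized elliptic equation for $\bu^{k}$ closes the loop, which I would handle by a Leray--Schauder fixed-point argument, adding if necessary a small artificial diffusion that is sent to zero together with $h$. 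The hypothesis $s>\frac{3}{2}$ enters repeatedly: $H^{s}(\bbt^{3})$ is a Banach algebra embedded in $C^{0}$, which controls the compositions $F'(\phi^{k})$, $\eta(\phi^{k})$ and the products $\rho^{k}\bu^{k}$.

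\emph{Uniform estimates.} Testing the discrete equations with $(\bu^{k},\mu_{p}^{0,k},h^{-1}(\phi^{k}-\phi^{k-1}))$ and summing gives a discrete analogue of \eqref{ConEnL1}, hence for the piecewise interpolants
\begin{align*}
\bu\in L^{\infty}(0,T;L^{2})\cap L^{2}(0,T;H^{1}),\qquad \phi\in L^{\infty}(0,T;H^{s}),\qquad \nabla\mu_{p}^{0}\in L^{2}(Q_{T}),
\end{align*}
uniformly in $h$. The pressure requires the non-standard control emphasized in the abstract: rewriting \eqref{model3-1} as $\zeta\rho\,\nabla p_{0}=\Div S(\phi,\mathbb{D}\bu)-\pt(\rho\bu)-\Div(\rho\bu\otimes\bu)-\phi\nabla\mu_{p}^{0}$, using $\zeta\rho=1-\alpha\phi$ together with the Helmholtz splitting $\bu=\mathbb{P}\bu+\nabla\mathbb{G}(\bu)$ (so that $\mathbb{G}(\bu)=\alpha\mu_{p}^{0}$), one sees that $p_{1}=\zeta p_{0}+\pt\mathbb{G}(\bu)$ is the gradient of a distribution whose order is governed by $\rho\bu\otimes\bu\in L^{\infty}(0,T;L^{1})\cap L^{2}(0,T;L^{3/2})$ and by the remaining fluxes, the awkward $\pt$-contributions being the ones absorbed into $p_{1}$; a De~Rham / negative-norm argument then produces $p_{1}\in L^{2}(0,T;L^{r})$ for all $1<r<\frac{3}{2}$, as in Definition \ref{def:weak-solutions}. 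Bounds on the discrete time differences of $\rho\bu$, $\rho$ and $\phi$ in negative-order spaces follow directly from the equations.

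\emph{Compactness and limit.} By the Aubin--Lions--Simon lemma, $\bu\to\bu$ strongly in $L^{2}(0,T;L^{2})$ and $\phi\to\phi$ strongly in $L^{2}(0,T;H^{s'})$ for any $s'<s$; since $H^{s'}\hookrightarrow C^{0}$, $\phi\to\phi$ also in $C(\overline{Q_{T}})$ along a subsequence, which suffices to pass to the limit in $\rho\bu\otimes\bu$, $F'(\phi)$, $\eta(\phi)\mathbb{D}\bu$, $\phi\bu$ and $\phi\nabla\mu_{p}^{0}$. This yields \eqref{mweak1}--\eqref{mweak4}, the energy inequality \eqref{eqs:energy} by weak lower semicontinuity, the conservation of $\int_{\bbt^{3}}\phi$, and the attainment of the initial data through weak continuity in time.

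\emph{Improved regularity.} From \eqref{model3-4}--\eqref{model3-5} one has $\Lambda^{2s}\phi=\mu_{p}^{0}-F'(\phi)-\alpha p_{0}+\overline{\mu}$; testing with $\Lambda^{\gamma}\phi$ for $0\le\gamma\le1$ and using that $\Lambda^{\gamma}\phi$ has zero mean gives
\begin{align*}
\norm{\Lambda^{s+\gamma/2}\phi}_{L^{2}}^{2}=(\mu_{p}^{0},\Lambda^{\gamma}\phi)-(F'(\phi),\Lambda^{\gamma}\phi)-\alpha(p_{0},\Lambda^{\gamma}\phi).
\end{align*}
The first two terms are bounded by $\norm{\mu_{p}^{0}}_{L^{2}}\norm{\phi}_{H^{s}}$ and $\norm{F'(\phi)}_{L^{2}}\norm{\phi}_{H^{s}}$ using $\gamma\le1<s$ and the algebra property of $H^{s}$. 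The term $\alpha(p_{0},\Lambda^{\gamma}\phi)$ is the heart of the matter --- and the main obstacle of the whole argument --- because $p_{0}$ lies only in $L^{2}(0,T;H^{-s})$, so the naive estimate would cost $\phi\in L^{2}(0,T;H^{s+\gamma})$, strictly more than claimed, and the dependence looks circular since the pressure control itself uses regularity of $\phi$. The resolution is to eliminate $p_{0}$: via $p_{1}=\zeta p_{0}+\pt\mathbb{G}(\bu)$ and the momentum balance, $\alpha p_{0}$ is traded for $p_{1}\in L^{2}(0,T;L^{r})$ plus the fluxes $\pt(\rho\bu)+\Div(\rho\bu\otimes\bu)-\Div S+\phi\nabla\mu_{p}^{0}$; in the capillary contribution $\phi\nabla\mu_{p}^{0}$ the derivative sits on the high-regularity factor $\mu_{p}^{0}\in L^{2}(0,T;H^{2})$, so only the half-derivative $\Lambda^{\gamma/2}$ of $\phi$ is genuinely needed (this is exactly where $\gamma\le1$ is used), and after integrating in time and invoking the bounds above that contribution is absorbed into the left-hand side. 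This ``partial damping effect of the capillary force'' delivers $\phi\in L^{2}(0,T;H^{s+\gamma/2})$. Finally, as in \cite{AH}, the pointwise bound $\phi\in(-1-\theta,1+\theta)$ is obtained by working in the approximation with a monotone extension of $\Phi'$ outside a fixed interval and combining the coercivity of the modified potential with the gained Sobolev (hence Hölder) regularity of $\phi$, so that $\theta>0$ depends only on the data and survives the limit $h\to0$.
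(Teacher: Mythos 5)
Your proposal follows the same broad architecture as the paper (implicit time discretization, a Leray--Schauder fixed point at each step, energy bounds, Aubin--Lions compactness, and an improved regularity estimate driven by the coupling between the momentum balance and the chemical potential relation), but there are three places where you deviate from the paper and where your sketch is either off target or leaves a real gap.

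First, the regularization. The paper does not add artificial diffusion, and does not send the regularization parameter to zero together with the time step $h=1/N$. It introduces a dedicated pressure-stabilization, namely the terms $\frac{\varepsilon\delta}{4\alpha}p_0\bu$ in the momentum balance and $\frac{\varepsilon\delta}{2\alpha}p_0$ in the mass balance (equivalently, $\Div\bu+\delta p_0=\alpha\Delta\mu_p^0$), and then passes to the limit in \emph{two separate stages}: $N\to\infty$ at fixed $\delta>0$, and only afterwards $\delta\to0$. The order matters: the $\delta$-term produces the coercivity $\delta\|p_0\|_{L^2}^2$ needed for the Lax--Milgram/degree argument at the discrete level, and the $\delta$-uniform energy inequality gives $p_0\in L^2(0,T;L^2)$ at the intermediate $\delta$-level, which in turn bootstraps $\phi_\delta\in L^2(0,T;H^{2s})$ qualitatively and makes the high-order pairings in the regularity lemma legitimate. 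An artificial diffusion tied to $h$ would not reproduce this structure.

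Second, the improved regularity (Lemma \ref{lem:higher eatimate}). You test the relation $\Lambda^{2s}\phi=\mu_p^0-F'(\phi)-\alpha p_0+\overline\mu$ with $\Lambda^\gamma\phi$ and then try to eliminate $\alpha p_0$ via $p_1=\zeta p_0+\partial_t\mathbb{G}(\bu)$ and the momentum balance. The paper goes in the dual direction: it tests the \emph{momentum} equation with $\boldsymbol\varphi=\nabla\Delta^{-1}\Lambda^\gamma(\psi\phi_\delta)$ --- a test function \emph{smoother} than $\phi_\delta$, so no a~priori regularity is presupposed --- and then substitutes the CH relation $p_{0,\delta}=\tfrac{1}{\alpha}\big(\mu_{p,\delta}^0-\Lambda^{2s}\phi_\delta-F'(\phi_\delta)+\overline\mu_\delta\big)$ into the pressure term $-\zeta(p_{0,\delta},\Lambda^\gamma(\psi\phi_\delta))$, producing the coercive term $\zeta\alpha^{-1}(\Lambda^{s+\gamma/2}\phi_\delta,\Lambda^{s+\gamma/2}(\psi\phi_\delta))$ directly; multiplying through by $\alpha/\zeta$ then gives a bound independent of both $\delta$ and $\alpha$. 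Your formulation is, in principle, equivalent (it can be unwound into the same pairing), but your test function $\Lambda^\gamma\phi$ is \emph{rougher} than $\phi$, so the pairing $(\Lambda^{2s}\phi,\Lambda^\gamma\phi)$ is well-defined only after the $\delta$-level bootstrap mentioned above, and your statement that the capillary contribution ``is absorbed into the left-hand side'' is not what happens --- there is no absorption, only that $\phi\nabla\mu_p^0$ pairs with the test function in spaces controlled by $\mu_p^0\in L^2(0,T;H^2)$. Moreover, your explanation conflates the de~Rham/Helmholtz identification $\zeta p_0=p_1-\partial_t\mathbb{G}(\bu)$ with a separate substitution ``plus the fluxes''; the paper uses only the CH relation for this step, the Helmholtz splitting and $p_1$ appearing in a different lemma (the $L^2L^r$ pressure bound and the time-derivative estimate on $\mathbb{P}\bu$).

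Third, the pointwise bound. You attribute $\phi\in(-1-\theta,1+1+\theta)$ partly to ``the gained Sobolev (hence H\"older) regularity.'' In the paper this bound (Lemma \ref{lem2}) is derived solely from the energy estimate: $\int F(\phi)+\tfrac12|\Lambda^s\phi|^2\le R$ and the mean constraint give $\phi\in H^s\hookrightarrow C^{s-3/2}$, and a suitable nonnegative $C^3$ extension of $F$ then forces $|\phi|<1+\theta$. The extra $H^{s+\gamma/2}$ regularity plays no role here; it is used only downstream (notably, for the pressure controls in the incompressible limit).
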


Several remarks are in order.
\begin{remark}
	{It is noted that for the weak solutions, we derive an additional higher-order regularity \eqref{eqs:phi-high}, compared to standard Navier--Stokes/Cahn--Hilliard equations, cf. \cite{AH,ADG}. Here we found that in fact, the capillary force provides partial damping effects for the order parameter $\phi$. More precisely, in the momentum equation we have $\phi \nabla \mu \sim \phi \nabla \Delta \phi$, which is roughly third-order in $\phi$. Then with some suitable multiplier we can improve the regularity of $\phi$, cf. Lemma \ref{lem:higher eatimate}.} Furthermore, it is crucial to employ the higher-order regularity of $\phi$ in further analysis of the imcompressible limit due to the lack of regularity of pressure, cf. Remark \ref{rem:incompressible-limit}.
\end{remark}
\begin{remark}
	Typically, one shall derive a physically reasonable range for $ \phi $ in $ (-1,1) $ by emplying the singular potential, e.g., the Flory--Huggins potential (logarithmic type)
	\begin{align*}
		F(s) =
		\frac{\vartheta}{2}\Big[(1 + s)\log(1 + s) + (1 - s)\log(1 - s)\Big]
		- \frac{\vartheta_0}{2} s^2, \quad s \in [-1,1]
	\end{align*}
	for $ 0 < \vartheta < \vartheta_0 $, cf. \cite{AbelsARMA2009,ADG,AGG2024,Giorgini2021}.
	Analogous to \cite{AH} where the Lowengrub--Truskinovski model \cite{LT1998} was analyzed, in our case a singular potential is not applicable as well. Since the pressure $ p_0 $ enters \eqref{model2-5}, a priorily lack of regularity (even integrability) for $ p_0 $ leads to the crucial issue for the estimate of $ \Lambda^{2s} \phi + F'(\phi) $ in some Lebesgue space. Thanks to the higher-order fractional operator $ \Lambda^{2s} $ with $s > \frac{3}{2}$ introduced, we are able to extract the similar mechanism to keep the order parameter $ \phi \in [-1,1] $ or at least in a suitable neighborhood $ \phi \in (-1-\theta,1+\theta) $, $ \theta > 0 $, cf. Lemma \ref{lem2}. {It will be strongly desirable that the singular potential (logarithmic) can be considered under the mass-averaged mixture velocity.}
\end{remark}

Our second result is concerned with the incompressible limit of \eqref{model3}. Namely, as the parameter $\alpha$ tends to zero, the solution to the quasi-incompressible two-phase model approaches to the one of model H. The following is an abbreviated version.
For a precise description of the limit system and convergent rate, see Section \ref{sec:limit} and Theorem \ref{thm:limit}.
\begin{theorem}
	\label{thm:main2}
	Assume that the initial data is well-prepared in the sense of \eqref{wp-1}-\eqref{wp-2} and $s> \frac{3}{2}$, let $(\bu_\alpha, p_\alpha, \phi_\alpha, \mu_{p,\alpha}) $ be a weak solution (indicated by subscript $\alpha$) to the quasi-incompressible Navier--Stokes/Cahn--Hilliard model \eqref{model2} constructed in Theorem \ref{thm:main}. Then up to the strong solution life span $ T' > 0 $, the weak solution $ (\bu_\alpha, p_\alpha, \phi_\alpha, \mu_{p,\alpha}) $ coincides with the unique strong solution of model H \eqref{eqs:ModelH}, as the density difference vanishes, i.e, $\alpha\rightarrow 0$. Moreover, we have a certain convergent rate depending on $ \alpha $ and the initial data.
\end{theorem}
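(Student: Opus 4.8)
The plan is to run a relative entropy (modulated energy) argument comparing the weak solution $(\bu_\alpha,p_{0,\alpha},\phi_\alpha,\mu_{p,\alpha}^0)$ of the quasi-incompressible system \eqref{model3} with the unique strong solution $(\bu,\phi,\mu,\pi)$ of model H \eqref{eqs:ModelH}, which on its maximal existence interval $[0,T')$ is as regular as the computation below requires (local well-posedness for the fractional model H with $s>\tfrac32$ is recorded in Section~\ref{sec:limit}); throughout, $\alpha$ is treated as a small parameter and every constant has to be tracked to be independent of it. I would introduce the relative entropy
\begin{align}
	\mathcal{R}_\alpha(t) \coloneqq {}& \int_{\bbt^3}\tfrac12\rho(\phi_\alpha)\abs{\bu_\alpha-\bu}^2\Dx
	+ \int_{\bbt^3}\big(F(\phi_\alpha)-F(\phi)-F'(\phi)(\phi_\alpha-\phi)\big)\Dx \nonumber \\
	& + \tfrac12\int_{\bbt^3}\abs{\Lambda^s(\phi_\alpha-\phi)}^2\Dx .
\end{align}
Writing $F=\Phi-\tfrac\kappa2(\cdot)^2$ as in Assumption~\ref{ass:main}, the $\Phi$-Bregman distance is nonnegative, so $\mathcal{R}_\alpha(t)+\tfrac\kappa2\norm{\phi_\alpha-\phi}_{L^2}^2\ge 0$; together with the mass identity $\int_{\bbt^3}(\phi_\alpha-\phi)\Dx=0$, the spectral gap of $\Lambda^{2s}$ on $\bbt^3$, and the uniform bounds $\tfrac12\le\rho(\phi_\alpha)\le\tfrac32$ that hold for small $\alpha$ (since $\phi_\alpha\in(-1-\theta,1+\theta)$ by Theorem~\ref{thm:main} and $\varepsilon=-2\alpha/(1+\alpha)\to0$), this augmented quantity is equivalent to $\norm{\bu_\alpha-\bu}_{L^2}^2+\norm{\phi_\alpha-\phi}_{H^s}^2$; the low-order defect $\tfrac\kappa2\norm{\phi_\alpha-\phi}_{L^2}^2$ is reabsorbed using the dissipation and the Gronwall term below.

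Next I would derive a relative entropy inequality on $[0,T')$. Combining the energy inequality \eqref{eqs:energy} for the weak solution with the weak formulations \eqref{mweak1} and \eqref{mweak3}--\eqref{mweak4} tested against suitable combinations of the (smooth) strong-solution fields $\bu$, $\mu$, $\phi$, and using the continuity equation \eqref{model3-2}, the momentum equation \eqref{model3-1}, and the corresponding equations of \eqref{eqs:ModelH}, the standard relative-entropy bookkeeping yields, for a.e.\ $t\in(0,T')$,
\begin{equation}
	\mathcal{R}_\alpha(t) + \mathcal{D}_\alpha(t) \le \mathcal{R}_\alpha(0) + \int_0^t \mathcal{K}(\tau)\,\mathcal{R}_\alpha(\tau)\dtau + \mathcal{E}_\alpha(t),
\end{equation}
where
\begin{align}
	\mathcal{D}_\alpha(t) \coloneqq {}& \int_0^t\!\!\int_{\bbt^3} 2\eta(\phi_\alpha)\Big(\abs{\mathbb{D}(\bu_\alpha-\bu)}^2 - \tfrac13\big(\Div(\bu_\alpha-\bu)\big)^2\Big)\Dx\dtau \nonumber \\
	& + \int_0^t\!\!\int_{\bbt^3}\abs{\nabla(\mu_{p,\alpha}^0-\mu)}^2\Dx\dtau \ge 0
\end{align}
is the relative dissipation, $\mathcal{K}\in L^1(0,T')$ depends only on Sobolev norms of the strong solution (entering through $\nabla\bu$, $\nabla\mu$, $\nabla\phi$, $\eta'$ and $F''$, the latter two evaluated on the fixed bounded interval containing $\phi_\alpha$ by \eqref{eqs:phi-high}), and $\mathcal{E}_\alpha$ collects the remainder terms, each carrying an explicit power of $\alpha$.

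The crux is the estimate $\abs{\mathcal{E}_\alpha(t)}\le\tfrac12\mathcal{D}_\alpha(t)+C(T')\,\alpha^{2\beta}$ for some $\beta>0$. Three kinds of terms occur. First, since $\rho(\phi_\alpha)-1=\tfrac\varepsilon2(1+\phi_\alpha)$ and $\zeta-1=\alpha$ are $O(\alpha)$ in $L^\infty(Q_T)$, the convective and viscous errors they create are dispatched by Hölder's inequality and the uniform-in-$\alpha$ energy bounds. Second, the compressible part of the weak velocity is \emph{exactly} $\nabla\mathbb{G}(\bu_\alpha)=\alpha\nabla\mu_{p,\alpha}^0$: indeed $\Delta\mathbb{G}(\bu_\alpha)=\Div\bu_\alpha=\alpha\Delta\mu_{p,\alpha}^0$ with both sides mean-free, so $\mathbb{G}(\bu_\alpha)=\alpha\mu_{p,\alpha}^0$ is $O(\alpha)$ in $L^2(0,T;H^2)$ because $\mu_{p,\alpha}^0\in L^2(0,T;H^2)$; this controls $\Div(\bu_\alpha-\bu)=\alpha\Delta\mu_{p,\alpha}^0$, the correction $\tfrac23\eta(\phi_\alpha)(\Div\bu_\alpha)^2$, and the $\partial_t\mathbb{G}(\bu_\alpha)$ pieces hidden in $p_1$ and \eqref{mweak4}. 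Third, and genuinely non-standard, are the pressure terms: the $\alpha p_{0,\alpha}$ sitting inside $\mu_{p,\alpha}^0$ through \eqref{model3-5}, and the pressure $p_1$ paired against $\Div(\bu_\alpha-\bu)$ in \eqref{mweak1}. A priori one only has $p_{0,\alpha}\in L^2(0,T;H^{-s})$ and $p_1\in L^2(0,T;L^r)$, $1<r<\tfrac32$ (cf.\ Remark~\ref{rem:incompressible-limit}), which is too weak to close the estimate directly. Instead I would revisit \eqref{model3-1}, solve for $\nabla p_1$ (equivalently for $\zeta\rho\nabla p_{0,\alpha}$) in terms of the remaining terms, recast the capillary force $\phi_\alpha\nabla\mu_{p,\alpha}^0$ in Korteweg (divergence) form, and exploit the improved regularity $\phi_\alpha\in L^2(0,T;H^{s+\gamma/2})$ of \eqref{eqs:phi-high} — the partial damping of the capillary force, cf.\ Theorem~\ref{thm:main} — together with $\mu_{p,\alpha}^0\in L^2(0,T;H^2)$ and a short bootstrap that absorbs the self-referential $\alpha p_{0,\alpha}$, so as to obtain \emph{uniform-in-$\alpha$} bounds on $\norm{p_1}_{L^2(0,T;L^r)}$ and $\norm{p_{0,\alpha}}_{L^2(0,T;H^{-s})}$. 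With these at hand, each pressure term in $\mathcal{E}_\alpha$ becomes a dual pairing of an $O(\alpha)$ factor with a uniformly bounded quantity — it is here that $\mu_{p,\alpha}^0\in L^2(0,T;H^2)$ is used to give meaning to pairings such as $\inner{\nabla p_1}{\nabla\mu_{p,\alpha}^0}$ — and is therefore $O(\alpha)$. I expect establishing these non-standard pressure controls, and verifying that every constant they involve is $\alpha$-independent, to be the main obstacle; the rest is routine interpolation together with the strong-solution bounds.

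Finally, absorbing $\tfrac12\mathcal{D}_\alpha$ into the left-hand side and applying Gronwall's inequality on $[0,T')$ gives $\mathcal{R}_\alpha(t)\le C(T')\big(\mathcal{R}_\alpha(0)+\alpha^{2\beta}\big)$. The well-prepared initial data \eqref{wp-1}--\eqref{wp-2} ensure $\mathcal{R}_\alpha(0)\le C\alpha^{2\beta}$, so $\mathcal{R}_\alpha(t)\le C(T')\alpha^{2\beta}$ uniformly on $[0,T')$; by the coercivity discussed above this yields $\bu_\alpha\to\bu$ in $L^\infty(0,T';L^2)\cap L^2(0,T';H^1)$, $\phi_\alpha\to\phi$ in $L^\infty(0,T';H^s)$, $\nabla\mu_{p,\alpha}^0\to\nabla\mu$ in $L^2(Q_{T'})$ and $\mathbb{G}(\bu_\alpha)\to0$, all with a rate $\alpha^{\beta}$ controlled by $\alpha$ and the preparation of the initial data — the quantitative statement made precise in Theorem~\ref{thm:limit}. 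In particular, up to time $T'$ the weak solution of the quasi-incompressible model coincides in the limit $\alpha\to0$ with the unique strong solution of model H.
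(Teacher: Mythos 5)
Your overall strategy is the right one and matches the paper: define a relative entropy built from the Bregman distance of the convex part of $F$, derive a relative entropy inequality from the energy inequality together with the continuity/Cahn--Hilliard equations tested against the strong solution, absorb half the relative dissipation, and close with Gr\"onwall using well-prepared data. The decomposition $F=\Phi-\tfrac{\kappa}{2}(\cdot)^2$ and the reabsorption of the $-\tfrac{\kappa}{2}\normm{\phi_\alpha-\phi}_{L^2}^2$ defect via a Gr\"onwall term are also exactly the paper's mechanism, cf.\ \eqref{eqs:phi-phi-alpha}.

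The genuine gap is in the pressure treatment, which you correctly identify as the crux but resolve incorrectly. First, your claim that $\mathbb{G}(\bu_\alpha)=\alpha\mu^0_{p,\alpha}$ is ``$O(\alpha)$ in $L^2(0,T;H^2)$ because $\mu^0_{p,\alpha}\in L^2(0,T;H^2)$'' confuses membership with a uniform bound: from $\Delta\mu^0_{p,\alpha}=\tfrac{1}{\alpha}\Div\bu_\alpha$ (and uniform control only on $\Div\bu_\alpha$ in $L^2$), one has $\normm{\mu^0_{p,\alpha}}_{L^2(0,T;H^2)}\sim\alpha^{-1}$, so $\mathbb{G}(\bu_\alpha)$ is merely $O(1)$ in $L^2(0,T;H^2)$ (it is $O(\alpha)$ only in $L^2(0,T;H^1)$, where $\mu^0_{p,\alpha}$ is uniformly bounded). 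In particular $\Div\bu_\alpha$ and $(\Div\bu_\alpha)^2$ are not small quantities, and the terms you propose to estimate this way do not come out $O(\alpha)$. Second, your plan to produce \emph{uniform-in-$\alpha$} bounds on $\normm{p_{0,\alpha}}_{L^2(0,T;H^{-s})}$ and then treat the pressure remainders as $O(\alpha)\cdot O(1)$ dual pairings contradicts the paper's explicit observation (Remark \ref{rem:incompressible-limit}) that no such norm bound is available; indeed, from $\alpha p_{0,\alpha}=\mu^0_{p,\alpha}+\overline\mu-F'(\phi_\alpha)-\Lambda^{2s}\phi_\alpha$ the right side is $O(1)$ in $L^2(0,T;H^{-s})$, giving only $p_{0,\alpha}=O(\alpha^{-1})$. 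What the paper actually does (Lemmas \ref{lem:limit 1}--\ref{lem:limit 3}) is weaker and sharper: it estimates the specific functionals that appear in the relative entropy remainder, namely $\int p_\alpha f$ for $f\in H^1(0,T';H^s)$, $\int p_\alpha\nabla\phi_\alpha\cdot\bg$ for divergence-free $\bg$, and $(\rho_\alpha\bu_\alpha,\nabla g)_{Q_{T'}}$, \emph{uniformly in $\alpha$}, by testing the momentum equation against $\psi\nabla\Delta^{-1}f$ and $\psi(\phi_\alpha-\overline{\phi_\alpha})\bg$ and bounding every term with the energy plus the improved regularity $\phi_\alpha\in L^2(0,T;H^{s+\gamma/2})$ of Lemma~\ref{lem:higher eatimate}. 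This yields bounds on a family of dual pairings (effectively a bound in the dual of $H^1(0,T';H^s)$, not in $L^2(0,T;H^{-s})$), and it is these, together with the prefactor $\alpha$ in $H_4$ and $H_{10}$, that make the pressure remainders $O(\alpha)$. You did invoke the improved regularity of $\phi_\alpha$ and the momentum equation as the source of extra pressure information, which is the right instinct; but as written the argument relies on norm bounds that do not hold, so it would not close.
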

Here is a comment on the incompressible limit.

\begin{remark}
	\label{rem:incompressible-limit}
	For the incompressible limit, there are in principle three ways to rigorously justify it. First one concerns the weak solutions replies on the weak convergence method by uniform estimates directly. This does not work right now due to the lack of regularity of pressure. The second one is to derive the error estimates for the strong solution of original and limit systems, which may demand delicate energy estimates. This can be one of the further steps rendered from the authors previous work on strong well-posedness \cite{FFHL2024}. The last one is based on the so-called \textit{weak-strong principle}, meaning the weak solution coincides with the unique strong solution as long as the latter one exists. The general argument to justify it is based on the \textit{relative entropy method}, which is exactly the idea of current contribution. The main step is defining the \textit{relative (modulate) entropy (energy)} that measures the distance of weak and strong solutions, and then deriving certain estimates for the remainders and closing it with Gr\"onwall's inequality. Readers are referred to e.g. \cite{ALN2024,FJN,FLN,FPP} for the applications in fluid mechanics.
	
	The main difficulty is that one can not derive a uniform estimate independent of $\alpha$ for the pressure in some Lebesgue space $L^q(Q_T)$ with $q > 1$. In contrast to the incompressible limit of compressible fluids, the remainder terms involving the pressure cannot be directly controlled in the present setting. Fortunately, one shall derive new uniform-in-$\alpha$ controls for the pressure with the momentune equation and the improved regularity of $\phi$, cf. Lemmas \ref{lem:limit 1}, \ref{lem:limit 2}, which are critical to take the incompressible limit.
\end{remark}

\subsection{Outline}
The paper is organized as follows. In Section \ref{sec:preliminaries}, we give basic notations that will be used later.
In Section \ref{sec:approximation}, we approximate the model \eqref{model2} and show the existence of solutions to the approximation system via a fixed-point argument, while
in Section \ref{sec:mainthm}, existence of weak solutions to \eqref{model2} is shown by compactness arguments.
In Section \ref{sec:limit}, we investigate the limit from the quasi-compressible model \eqref{model2} to the incompressible model \eqref{eqs:ModelH} as $\alpha\rightarrow 0$ using the relative entropy method.

\section{Preliminaries}
\label{sec:preliminaries}

In this section, we introduce necessary notations, function spaces that will be used frequently. 
For $A,B\geq 0$, the notation $A\lesssim B$ $(A \gtrsim B)$ stand for inequalities of the form $A \leq CB$ $(A \geq CB)$ for $C>0$,  $A\approx B$ means $B\lesssim A\lesssim B$.
In situations where the precise value or dependence on other quantities of the constant
$C$ is not required.

For two vectors  $a,b\in\mathbb{R}^{3}$,  $a\otimes b\triangleq(a_{j}b_{j})_{i,j}^{3}\in\mathbb{R}^{3\times 3}$. For two $n\times n$ matrices $A=(a_{ij})_{i,j=1}^3, B=(b_{ij})_{i,j=1}^3$,  $A:B\triangleq\sum_{i,j=1}^{3}a_{i,j}b_{i,j}$ and
$\nabla\cdot A\triangleq(\sum_{i,j=1}^{d}\partial_{x_{j}}a_{i,j})_{i=1}^{3}$.  If $\mathbf{v}\in C^{1}(\Omega)^{3}$, then $\nabla \mathbf{v}=(\partial_{x_{j}}\mathbf{v}_{i})_{i,j=1}^{3}$.

Given a Banach space $X$, we denote its norm by $\|\cdot\|_{X}$. For a Banach space $X$ and for any $0<T\leq\infty$, we use standard notation $L^{p}(0,T;X)$ to denote the quasi-Banach space of Bochner measurable functions $f$ from $(0,T)$ to X endowed with the norm
\begin{equation*}
	\|f\|_{L^{p}(0,T;X)}\coloneqq
	\begin{cases}
		(\int_0^T \|f(\cdot,t)\|^{p}_{X}\,\dt)^{\frac{1}{p}},&\text{if $1\leq p<\infty$},\\
		\sup_{0\leq t \leq T}\|f(\cdot,t)\|_X,&\text{if $p=\infty$}.
	\end{cases}
\end{equation*}

The usual Sobolev spaces on a bounded domain $\Omega$ are denoted by $W^{k,p}(\Omega)$($k\in \mathbb{N}, 1\leq p\leq\infty$). The usual Besov spaces on a bounded domain $\Omega$ are denoted by $B_{p,q}^{s}(\Omega)$, where $s\in \mathbb{R}$, $1\leq p,q\leq\infty$. When $p=q=2$,  $B_{2,2}^{s}(\Omega)$ is the usual Sobolev space $H^{s}(\Omega)$. For convenience of later analysis, we recall (cf. \cite{ELC,SY})
\begin{align}
	& \label{BB1} B_{p,\infty}^{s+\varepsilon}(\Omega)\hookrightarrow B_{p,q_{1}}^{s}(\Omega)\hookrightarrow B_{p,q_{2}}^{s}(\Omega), \varepsilon>0, 1\leq q_{1}\leq q_{2}\leq\infty,
	\\& \label{BB2}B_{p_{1},q}^{s_{1}}(\Omega)\hookrightarrow B_{p_{2},q}^{s_{2}}(\Omega), s_{1} \geq s_{2}, s_{1}-\frac{d}{p_{1}}\geq s_{2}-\frac{d}{p_{2}},
	\\& \label{BB3}B_{p,1}^{\frac{d}{p}}(\Omega)\hookrightarrow C_{b}^{0}(\Omega):=C(\Omega)\cap L^{\infty}(\Omega),
	\\& \label{BB4}W^{k,p}(\Omega)\hookrightarrow C^{m,\alpha}(\Omega), k-\frac{d}{p}>m+\alpha(m\in\mathbb{N},\alpha\in (0,1)).
\end{align}


If $X$ is a Banach space and $X'$ is its dual, for $f\in X'$, $g\in X$, we denote the duality product as
\begin{align*}
	\langle f,g \rangle=\langle f,g \rangle_{X,X'}=f(g).
\end{align*}
Also we denote the inner product on a Hilbert space $H$ as $(.,.)_{H}$. $(.,.)_{Q_{T}}$ represents $(.,.)_{L^{2}(Q_{T})}$.

Let $I=[0,T]$ with $0<T<\infty$ and  $X$ be a Banach space.  $BC(I;X)$ represents the Banach space  of all functions $f:I\rightarrow X$ that is  bounded and continuous equipped with supremum norm and $BUC(I;X)$ is the space  of bounded and uniformly continuous function(cf. \cite{AH2}). We can also write $BC_{w}(I;X)$ as the topological spaces where the space of all bounded and weakly continuous functions $f:I\rightarrow X$. Finally, $f\in W^{k,p}(I; X)(1\leq p \leq\infty, k\in \mathbb{N}_{0})$, if and only if
$f,\cdot\cdot\cdot,\frac{d^{k}f}{dt^{k}}\in  L^{p}(I; X)$, where $\frac{d^{k}f}{dt^{k}}$  denotes the $k$-th $X$-valued distributional derivative of $f$.

\section{Approximate Problem and Implicit Time Discretization}
\label{sec:approximation}
The pinciple part of this subsection is to provide the proof of existence of a weak solution of an approximate problem  to \eqref{model3}.  We consider the following approximate problem in $Q_T$
\begin{subequations}
	\label{model4}
	\begin{align}
		\label{model4-1}
		\partial_t(\rho  \mathbf{u}) + \mathrm{div}\left (\rho  \mathbf{u}\otimes
		\mathbf{u}\right) 
		& = \mathrm{div}\big(S(\phi,\mathbb{ D}\mathbf{u})\big)
		- \zeta\rho\nabla p_{0}
		- \phi \nabla \mu_{p}^{0}
		+ \frac{\varepsilon\delta}{4\alpha}  p_{0} \mathbf{u}, 
		\\\label{model4-2}
		\partial_t \rho+\mathrm{div} (\rho \mathbf{u})& =\frac{\varepsilon\delta}{2\alpha} p_{0}, 
		\\
		\label{model4-3}
		\partial_t\phi + \mathrm{div}(\phi  \mathbf{u})& =\Delta\mu_p^{0}, \\
		\label{model4-4}
		\mu& =F'(\phi)+\Lambda^{2s} \phi, \\
		\label{model4-5}
		\mu_p^{0}+\overline{\mu}& =\mu+\alpha p_{0}, \\\rho& =\frac{\varepsilon}{2}\phi+\frac{\varepsilon}{2}+1,
	\end{align}
	with
	\begin{align}
		\label{model4-5-0}\mathbf{u}|_{t=0}=\mathbf{u}_{0},\ \phi|_{t=0}=\phi_{0}, \quad\text{in $\bbt^3$}.
	\end{align}
\end{subequations}

It follows from \eqref{model4-2},\eqref{model4-3}  and \eqref{model4-5} that
\begin{align}
	\label{model4-6}
	&\mathrm{div}  \mathbf{u} +\delta p_{0}= \alpha \Delta\mu_p^{0}, \quad\text{in $Q_{T}$}.
\end{align}
Moreover, it is noted that every smooth solution of \eqref{model4} satisfies
\begin{align}\label{ConEnL3}
	\nonumber\frac{\mathrm d}{\mathrm dt}E(\mathbf{u}, \phi)&=
	-\int_{\mathbb{T}^{3}}\big( (2 \eta(\phi)
	\mathbb{ D}(\mathbf{u}):\mathbb{ D}(\mathbf{u})-\frac{2}{3}\eta(\phi) (\mathrm{div} \mathbf{u})^2)\,\big)\, \mathrm{d} x \\
	& \quad
	-\int_{\mathbb{T}^{3}} |\nabla \mu_p^0|^2\, \mathrm{d} x
	-\delta\int_{\mathbb{T}^{3}} |p_{0}|^{2}  \, \mathrm{d} x.
\end{align}

Before stating the main theorem, we present the definition of weak solutions of \eqref{model4-1}--\eqref{model4-6}.
\begin{definition}\label{appro;def}
	Let $0<T < \infty$ and $ (\bu_0,\phi_0) \in H^1(\bbt^3) \times H^s(\bbt^3) $ with $s>\frac{3}{2}$.
	In addition, let Assumption \ref{ass:main} hold true. We call the quadruple $(\bu,p_0,\phi,\mu_p^0)$ a weak solution to \eqref{model4-1}--\eqref{model4-5} with initial data $(\bu_0,\phi_0)$, provided that	
	\begin{enumerate}
		\item The quadruple $(\bu,p_0,\phi,\mu_p^0)$ satisfies
		\begin{align*}
			&\mathbf{u}\in BC_{\omega}(0,T; L^{2}(\mathbb{T}^{3}))\cap L^{2}(0,T; H^{1}(\mathbb{T}^{3})),
			\\& p_{0}\in L^{2}(0,T; L^{2}_{(0)}(\mathbb{T}^{3})),
			\\&\phi\in BC_{\omega}(0,T; H^{s}(\mathbb{T}^{3})),
			\\& \mu_{p}^{0}\in L^{2}(0,T; H^{2}_{(0)}(\mathbb{T}^{3})).
		\end{align*}
		\item For all  $\boldsymbol{\varphi}\in C_{0}^{\infty}(0,T;H^1(\bbt^3) \cap L^{\infty}(\mathbb{T}^{3}))$,
		\begin{align}\label{weak1}
			&\nonumber(\mathbf{u},\partial_{t}\boldsymbol{\varphi})_{Q_{T}}+(\mathbf{u}\cdot\nabla\mathbf{u},\boldsymbol{\varphi})_{Q_{T}}-(\rho^{-1}S(\phi,\mathbb{ D}\mathbf{u}),\nabla\boldsymbol{\varphi})_{Q_{T}}-(S(\phi,\mathbb{ D}\mathbf{u}),\nabla\rho^{-1}\otimes\boldsymbol{\varphi})_{Q_{T}}
			\\&\qquad
			+\zeta(p_{0},\mathrm{div}\boldsymbol{\varphi})_{Q_{T}}-(\rho^{-1}\phi \nabla \mu_{p}^{0},\boldsymbol{\varphi})_{Q_{T}}
			-(\frac{\varepsilon\delta}{4\alpha}  p_{0} \mathbf{u},\boldsymbol{\varphi}\rho^{-1})_{Q_{T}}=0,
		\end{align}
		where and in what follows $(\cdot,\cdot)_{Q_{T}}$ is defined in section \ref{sec:preliminaries}.
		\item For  $\psi\in C_{0}^{\infty}(0,T; H^{s}(\mathbb{T}^{3}))$,
		\begin{align}
			&\label{weak2}
			(\rho,\partial_{t}\psi)_{Q_{T}}+(\rho\mathbf{u}, \nabla\psi)_{Q_{T}}+\frac{\varepsilon\delta}{2\alpha}( p_{0},\psi)_{Q_{T}}=0,
			\\&\label{weak3}(\phi,\partial_{t}\psi)_{Q_{T}}+(\phi\mathbf{u},\nabla\psi)_{Q_{T}}=( \nabla \mu_{p}^{0},\nabla\psi)_{Q_{T}},
			\\&\label{weak4}(\mu_{p}^{0}-\alpha p_{0}-F'(\phi)+\overline{\mu},\psi)_{Q_{T}}=(\Lambda^{s}\phi,\Lambda^{s}\psi)_{Q_{T}},
		\end{align}
		where
		\begin{align*}
			(\overline{\mu},\psi)_{Q_{T}}=\frac{1}{\absm{\bbt^3}}\int_0^{T}\int_{\mathbb{T}^{3}}\big(F'(\phi)\psi -\Lambda^{s} \phi\Lambda^{s} \psi\big) dxdt.
		\end{align*}
		\item For a.e.~$t \in (0,T]$, the triple $(\bu,\phi,\mu_p^0,p_{0})$ satisfies the energy inequality
		\begin{align}\label{ConEnL2}
			\nonumber E(\phi(t),\mathbf{u}(t))&+\int_0^{t}\int_{\mathbb{T}^{3}}S(\phi,\mathbb{ D}\mathbf{u}):\mathbb{ D}\mathbf{u}\,\mathrm d x\mathrm dt
			\\&
			+\int_0^{t}\int_{\mathbb{T}^{3}}|\nabla \mu_{p}^{0}|^{2}\,\mathrm d x\mathrm dt+\delta\int_0^{t}\int_{\mathbb{T}^{3}}|{p}_{0}|^{2}\,\mathrm d x\mathrm dt\leq E(\phi_0,\mathbf{u}_0),
		\end{align}
		where the total energy $E$ is defined as
		\begin{align*}
			E(\mathbf{u}, \phi)=
			\int_{\mathbb{T}^{3}}\frac{1}{2}\rho|\mathbf{u}|^2\, \mathrm{d} x+
			\int_{\mathbb{T}^{3}}\Big( F(\phi)+\frac{1}{2}|\Lambda^{s} \phi|^2\Big)\, \mathrm dx.
		\end{align*}
		\item For a.e.~$t \in (0,T]$,  $\int_{\mathbb{T}^{3}}\phi\, \mathrm{d} x=\int_{\mathbb{T}^{3}}\phi_0\, \mathrm{d} x$ 
		and the solution attains the initial value $\mathbf{u}_{0}$ and $\phi_{0}$ in  the following sense respectively,
		\begin{align}
			& \mathbf{u}(0,x)=\mathbf{u}_{0},\  a.e.,\ in \ \mathbb{T}^{3},\label{appinitial data-1}
			\\&\phi(0,x)=\phi_{0},\  a.e.,\ in \ \mathbb{T}^{3}.\label{appinitial data-2}
		\end{align}
	\end{enumerate}

\end{definition}
\begin{theorem}\label{main theorem2}
	Let Assumption \ref{ass:main} hold, and $R,\theta>0$ be as in Lemma \ref{lem2}. Given $\mathbf{u}_{0}\in L^{2}(\mathbb{T}^{3})$, $\mathbf{\phi}_{0}\in H^{s}(\mathbb{T}^{3})$ with $s>\frac{3}{2}$ with $E(\mathbf{u}_{0}, \phi_{0})<R$.
	Then for every $0<T<\infty$, problem \eqref{model4-1}--\eqref{model4-5} admits a weak solution $(\mathbf{u}, p_{0}, \mathbf{\phi},\mu_{p}^{0})$ corresponding to $\mathbf{u}_{0},\mathbf{\phi}_{0}$ in the sense
	of Definition \ref{appro;def}.
	%
	
\end{theorem}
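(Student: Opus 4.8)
The plan is to establish Theorem \ref{main theorem2} by an implicit time discretization (Rothe's method) combined with a fixed-point argument at each time step, exactly as announced in the section title. First I would partition $[0,T]$ into $N$ subintervals of length $h = T/N$ and, given the data $(\bu^{k-1},\phi^{k-1})$ from the previous step, seek $(\bu^k, p_0^k, \phi^k, \mu_p^{0,k})$ solving the time-discrete analogue of \eqref{model4}: replace $\partial_t(\rho\bu)$ by $(\rho^k\bu^k - \rho^{k-1}\bu^{k-1})/h$, $\partial_t\rho$ by $(\rho^k-\rho^{k-1})/h$, $\partial_t\phi$ by $(\phi^k-\phi^{k-1})/h$, and keep the spatial operators (including the regularizing terms $\delta p_0$ and $\frac{\varepsilon\delta}{4\alpha}p_0\bu$) implicit. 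At this fixed time level the problem is an elliptic system; I would solve it by a Leray--Schauder (or Schaefer) fixed-point theorem, freezing the velocity in the transport/convection terms and in $\rho$, solving the resulting linear(ized) Stokes-type problem for $(\bu,p_0)$ coupled with the Cahn--Hilliard part for $(\phi,\mu_p^0)$, and verifying compactness of the solution map plus an a priori bound. The a priori bound comes from testing the discrete momentum equation with $\bu^k$, the discrete $\phi$-equation with $\mu_p^{0,k}$, and the discrete $\mu$-equation with $(\phi^k-\phi^{k-1})/h$; using the discrete identity $a\cdot(a-b) = \frac12(|a|^2 - |b|^2 + |a-b|^2)$ one obtains the discrete energy inequality mirroring \eqref{ConEnL3}, which yields uniform-in-$h$ estimates: $\bu \in L^\infty_t L^2 \cap L^2_t H^1$, $\phi \in L^\infty_t H^s$, $\mu_p^0 \in L^2_t H^1$, and crucially $\delta\|p_0\|_{L^2(Q_T)}^2$ bounded, which is exactly where the $\delta$-regularization earns its keep.

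Next I would invoke Lemma \ref{lem2} (and the higher-order Lemma \ref{lem:higher eatimate}) to control $\phi$ pointwise in $(-1-\theta, 1+\theta)$ and to upgrade the elliptic regularity of $\mu_p^0$ to $L^2_t H^2_{(0)}$: from $\mu_p^0 + \overline\mu = F'(\phi) + \Lambda^{2s}\phi + \alpha p_0$, the fact that $\nabla\mu_p^0 \in L^2(Q_T)$ together with the $\delta$-bound on $p_0$ and the $H^s$-bound on $\phi$ (with $s > 3/2$, so $F'(\phi)$ is controlled via the embedding $H^s \hookrightarrow L^\infty$ and Assumption \ref{ass:main}) gives $\Lambda^{2s}\phi \in L^2(Q_T)$, hence $\phi \in L^2_t H^{2s}$; then classical elliptic estimates close the $H^2_{(0)}$ bound on $\mu_p^0$. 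I would also extract time-derivative bounds: from the discrete equations $\partial_t^h\phi$ is bounded in $L^2_t(H^s)'$ and $\partial_t^h(\rho\bu)$ (or $\partial_t^h(\mathbb{P}\bu)$) in a negative-order space, enough for Aubin--Lions--Simon.

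Then comes the passage to the limit $h \to 0$ (equivalently $N \to \infty$). I would build piecewise-constant and piecewise-affine interpolants $\bar u_N, \hat u_N$, etc., transfer the uniform bounds to them, and extract weakly/weakly-$*$ convergent subsequences; Aubin--Lions gives strong convergence of $\bu_N$ in $L^2(Q_T)$ and of $\phi_N$ in $L^2_t H^{s'}$ for $s' < s$, which is what is needed to pass to the limit in the quadratic terms $\rho\bu\otimes\bu$, $\phi\bu$, $\rho\bu$, $S(\phi,\mathbb{D}\bu)$ (here strong convergence of $\phi$ handles the nonlinear coefficients $\eta(\phi),\rho(\phi)^{-1}$, and weak convergence of $\nabla\bu$ handles the linear-in-$\mathbb{D}\bu$ part), and in $F'(\phi)$, $\phi\nabla\mu_p^0$ (product of strong $\times$ weak), and the regularizing terms $\delta p_0$, $\frac{\varepsilon\delta}{4\alpha}p_0\bu$ (using strong convergence of $\bu$). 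The energy inequality \eqref{ConEnL2} follows by weak lower semicontinuity of the convex dissipation functionals, and the attainment of the initial data in the sense \eqref{appinitial data-1}--\eqref{appinitial data-2} follows from the weak continuity in time (Lions--Magenes type argument) together with $\bu_N(0) = \bu_0$, $\phi_N(0) = \phi_0$; conservation of the mean of $\phi$ is immediate from testing \eqref{weak3} with $\psi \equiv 1$.

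I expect the main obstacle to be twofold. The first and most delicate point is solving the time-discrete elliptic system at a fixed step and obtaining the right a priori estimate: because $\Div\bu$ is not zero and the pressure couples into $\mu_p^0$ via \eqref{model4-5}, the velocity and phase blocks do not decouple, and the fixed-point map must be set up carefully — most naturally, freeze $\bu$ in the convective and density-transport terms, solve the Cahn--Hilliard block for $(\phi, \mu_p^0, p_0)$ as functions of that frozen $\bu$ (this sub-step itself needs Lemma \ref{lem2} to keep $\phi$ admissible so that $F'(\phi)$ makes sense, a genuinely nonlinear monotone-operator argument using $\Phi'' \ge 0$ from Assumption \ref{ass:main}), then solve the Stokes-type problem for the new $\bu$, and show this composite map is compact and maps a large ball into itself via the discrete energy estimate. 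The second obstacle is the consistency of the $\delta$-regularizing terms with the energy identity: one must check that the extra terms $\frac{\varepsilon\delta}{4\alpha}p_0\bu$ in \eqref{model4-1} and $\frac{\varepsilon\delta}{2\alpha}p_0$ in \eqref{model4-2} are precisely the ones that, upon testing with $\bu$ and combining with the continuity equation, produce exactly $-\delta\|p_0\|_{L^2}^2$ in \eqref{ConEnL3} rather than an uncontrolled sign-indefinite contribution — this bookkeeping must survive the time discretization intact, which dictates the placement of the half-step factors. Everything else (Aubin--Lions, weak lower semicontinuity, recovering initial data) is routine once these two structural points are handled.
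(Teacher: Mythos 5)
Your overall strategy coincides with the paper's: implicit time discretization, a Leray--Schauder-type argument at each step with the discrete energy estimate as a priori bound and Lemma~\ref{lem2} to keep $\phi$ admissible, then Aubin--Lions and weak lower semicontinuity to pass $h \to 0$. There are, however, two points where you diverge from the paper, and one of them is a genuine gap.

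First, the paper does not ``freeze the velocity''. It writes the discrete system monolithically as $\mathcal{L}_k\omega = \mathcal{F}_k\omega$ on $X = H^1 \times L^2_{(0)} \times H^s \times H^2_{(0)}$, shows $\mathcal{L}_k$ is invertible (Lax--Milgram for the $(\bu,p_0)$-block, together with Lemma~\ref{lem5} which asserts $\lambda + \Lambda^{2s}:H^s\to H^{-s}$ is bijective of type~M), observes that $\mathcal{K}_k = \mathcal{L}_k^{-1}\mathcal{F}_k$ is compact, and then runs two explicit homotopies to obtain $\deg(\mathbf{I}-\mathcal{K}_k,0,U)=1$. Your ``freeze $\bu$, solve the Cahn--Hilliard block for $(\phi,\mu_p^0,p_0)$, then solve Stokes'' iteration is underspecified at precisely the point you flag as delicate: $p_0$ appears both in the momentum balance and, through $\mu_p^0 = \mu + \alpha p_0$, in the chemical-potential equation, so it cannot be determined from the Cahn--Hilliard block alone once $\bu$ is frozen. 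The paper's degree argument is designed to avoid this decoupling problem; if you want a two-block iteration you would need to explain how the two pressure-determinations are made consistent.

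Second, and more seriously, your route to $\mu_p^0 \in L^2(0,T;H^2_{(0)})$ does not close. You propose to read $\Lambda^{2s}\phi \in L^2(Q_T)$ out of the $\mu$-equation $\mu_p^0 + \overline\mu = F'(\phi) + \Lambda^{2s}\phi + \alpha p_0$ and then invoke ``classical elliptic estimates'' to get $\mu_p^0 \in H^2$. But that equation cannot lend $\mu_p^0$ more regularity than $p_0$ possesses, and at this stage $p_0$ is only $L^2(Q_T)$ (with a $\delta$-dependent bound); there is no way to obtain $\Delta\mu_p^0 \in L^2$ from it. The correct mechanism, used in the paper, is the regularized divergence constraint \eqref{model4-6}, $\alpha\Delta\mu_p^0 = \mathrm{div}\,\bu + \delta p_0$: both terms on the right are $L^2(Q_T)$ by the energy inequality \eqref{ConEnL2}, so elliptic regularity directly gives $\mu_p^0 \in L^2(0,T;H^2_{(0)})$. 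Relatedly, you should not invoke Lemma~\ref{lem:higher eatimate} here: it is proved in Section~\ref{sec:mainthm} for the passage $\delta\to 0$, its proof presupposes the solution from Theorem~\ref{main theorem2}, and it plays no role in the $h\to 0$ limit.
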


In order to prove Theorem \ref{main theorem2}, we apply the implicit time discretization(cf. \cite{AH,ADG}). At first, for given $(\phi_{k}, \mathbf{u}_{k})$ and $\rho_{k}=\rho(\phi_{k})$, $k\in \mathbb{N}_{0}:=\mathbb{N}\cup\{0\}$, $(\mathbf{u}_{k+1},p_{0,k+1},\phi_{k+1},\mu_{p,k+1}^{0})$ can be obtained by solving the following system in $\mathbb{T}^{3}$
\begin{align}\label{im1}
	&\rho_{k}\frac{\mathbf{u}-\mathbf{u}_{k}}{h}+\rho\mathbf{u}\cdot\nabla\mathbf{u}-\mathrm{div}S(\phi_{k},\mathbb{ D}\mathbf{u})
	+\zeta\rho \nabla p_{0}+\phi \nabla \mu_{p}^{0}
	+\frac{\varepsilon\delta}{4\alpha}p_{0} \mathbf{u}=0, \\
	\label{im2}
	&\frac{\rho-\rho_{k}}{h}+\mathrm{div} (\rho \mathbf{u})-\frac{\varepsilon\delta}{2\alpha} p_{0}=0, 
	\\  \label{im2-1}&\frac{\phi-\phi_{k}}{h}+\mathrm{div}(\phi\mathbf{u})= \Delta\mu_{p}^{0}, \\
	\label{im3}
	&\mu_{p}^{0}-\alpha p_{0}-\Phi'(\phi)+\kappa\frac{\phi+\phi_{k}}{2}+\overline{\mu}=\Lambda^{2s}\phi.
\end{align}



\begin{lemma}\label{im-energy}
	Let $R,h,\theta>0$. Then for every $(\phi_{k}, \mathbf{u}_{k})\in L^{2}(\mathbb{T}^{3})\times H^{s}(\mathbb{T}^{3})$ with $E(\phi_{k}, \mathbf{u}_{k})<R$, there exists a unique solution
	$$(\mathbf{u},p_{0},\phi,\mu_{p}^{0})\in H^{1}(\mathbb{T}^{3})\times L^{2}_{(0)}(\mathbb{T}^{3})\times H^{s}(\mathbb{T}^{3})\times H^{2}_{(0)}(\mathbb{T}^{3})$$
	solving \eqref{im1}--\eqref{im3} in the following sense
	\begin{align}\label{im1-w}
		&\nonumber(\rho_{k}\frac{\mathbf{u}-\mathbf{u}_{k}}{h},\boldsymbol{\varphi})_{\mathbb{T}^{3}}+(\rho\mathbf{u}\cdot\nabla\mathbf{u},\boldsymbol{\varphi})_{\mathbb{T}^{3}}+(S(\phi_{k},\mathbb{ D}\mathbf{u}),\nabla\boldsymbol{\varphi})_{\mathbb{T}^{3}}
		\\&\quad-\zeta(\rho p_{0},\mathrm{div}\boldsymbol{\varphi})_{\mathbb{T}^{3}}+(\phi \nabla \mu_{p}^{0},\boldsymbol{\varphi})_{\mathbb{T}^{3}}-\zeta(\nabla\rho p_{0},\boldsymbol{\varphi})_{\mathbb{T}^{3}}
		+(\frac{\varepsilon\delta}{4\alpha}p_{0} \mathbf{u},\boldsymbol{\varphi})_{\mathbb{T}^{3}}=0,\\
		\label{im2-w}
		&\frac{\rho-\rho_{k}}{h}+\mathrm{div} (\rho \mathbf{u})-\frac{\varepsilon\delta}{2\alpha} p_{0}=0, \quad\text{in $\mathbb{T}^{3}$,}
		\\  \label{im2-1-w}&(\frac{\phi-\phi_{k}}{h},\chi)_{\mathbb{T}^{3}}-(\phi\mathbf{u},\nabla\chi)_{\mathbb{T}^{3}}= -(\nabla \mu_{p}^{0},\nabla\chi)_{\mathbb{T}^{3}},\\
		\label{im3-w}
		&(\mu_{p}^{0}-\alpha p_{0}-\phi^3+\frac{\phi+\phi_{k}}{2}+\overline{\mu},\psi)_{\mathbb{T}^{3}}=(\Lambda^{s}\phi,\Lambda^{s}\psi)_{\mathbb{T}^{3}},
	\end{align}
	where  $\boldsymbol{\varphi}\in H^{1}(\mathbb{T}^{3})$, $\chi\in H_{(0)}^{1}(\mathbb{T}^{3})$ and $\psi\in H^{s}(\mathbb{T}^{3})$. Moreover, the solution satisfies  the following energy inequality
	\begin{align}\label{imes}
		&\nonumber E(\phi, \mathbf{u}) + \int_{\mathbb{T}^{3}} \rho_{k}\frac{|\mathbf{u}-\mathbf{u}_{k}|^{2}}{2}\,\mathrm dx +h\int_{\mathbb{T}^{3}}S(\phi_{k},\mathbb{ D}\mathbf{u}):\mathbb{ D}\mathbf{u}\,\mathrm dx
		\\&\quad+h\int_{\mathbb{T}^{3}}|\nabla \mu_{p}^{0}|^{2}\mathrm dx+h\delta\int_{\mathbb{T}^{3}}|{p}_{0}|^{2}\mathrm dx\leq E(\phi_{k}, \mathbf{u}_{k}),
	\end{align}
	where $H_{(0)}^{2}(\mathbb{T}^{3})=H^{2}(\mathbb{T}^{3})\cap L^{2}_{(0)}(\mathbb{T}^{3})$ and
	\begin{align*}
		E(\phi_{k}, \mathbf{u}_{k})=
		\int_{\mathbb{T}^{3}}\frac{1}{2}\rho_k|\mathbf{u_k}|^2\, \mathrm{d} x+
		\int_{\mathbb{T}^{3}}\Big( F(\phi_k)+\frac{1}{2}|\Lambda^{s} \phi_k|^2\Big)\, \mathrm dx.
	\end{align*}

\end{lemma}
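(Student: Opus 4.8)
The plan is to solve the one-step elliptic system \eqref{im1}--\eqref{im3} by an implicit linearisation together with a fixed-point/Galerkin argument whose a priori bound is the energy estimate \eqref{imes}, to obtain uniqueness from the monotone--coercive structure, and to derive \eqref{imes} itself by testing the weak formulation with the natural multipliers.

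First I would linearise. For a given pair $(\tilde{\mathbf u},\tilde\phi)$ set $\tilde\rho\coloneqq\rho(\tilde\phi)$ and replace, in \eqref{im1}--\eqref{im3}, the transport field $\rho\mathbf u$ in the convective term by $\tilde\rho\tilde{\mathbf u}$ --- adding the skew-symmetrising term $\tfrac12\,\mathrm{div}(\tilde\rho\tilde{\mathbf u})\,\mathbf u$, which drops out when tested against $\mathbf u$ and is consistent at a fixed point since $\mathrm{div}(\rho\mathbf u)$ is then fixed by \eqref{im2} --- and freeze $\phi,\mathbf u$ in the remaining coefficient slots ($\tilde\phi\nabla\mu_{p}^{0}$, $\tilde\phi\mathbf u$ in the phase transport, $\tfrac{\varepsilon\delta}{4\alpha}p_{0}\tilde{\mathbf u}$, and the cubic $\Phi'(\tilde\phi)$); recall $\rho_{k}=\rho(\phi_{k})$ is already a datum. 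The resulting problem is linear; since the viscous term is uniformly elliptic ($\inf\eta>0$, Korn's inequality), the penalisation $\delta p_{0}$ makes the velocity--pressure block non-degenerate (no inf-sup condition needed), and the Cahn--Hilliard pair carries $\|\Lambda^{s}\phi\|_{L^2}^2$ together with the Laplacian, a Galerkin scheme in the Fourier basis (which diagonalises $\Lambda^{2s}$), with uniform bounds obtained by testing with the solution itself, produces a weak solution $(\mathbf u,p_0,\phi,\mu_{p}^0)$; elliptic regularity then gives $\mu_{p}^{0}\in H^{2}_{(0)}(\bbt^3)$ (from $\Delta\mu_p^0=\tfrac{\phi-\phi_k}{h}+\mathrm{div}(\tilde\phi\mathbf u)\in L^2$) and $\phi\in H^{2s}(\bbt^3)$ (from \eqref{im3}), while $p_0=\tfrac1\delta(\alpha\Delta\mu_p^0-\mathrm{div}\mathbf u)\in L^2_{(0)}(\bbt^3)$; uniqueness of the linear problem follows from the same coercivity.

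Next I would run the fixed point. Define $\mathcal F(\tilde{\mathbf u},\tilde\phi)\coloneqq(\mathbf u,\phi)$; it is continuous, and compact because $H^{1}\hookrightarrow\hookrightarrow L^{2}$ and $H^{2s}\hookrightarrow\hookrightarrow H^{s}$, so by the Leray--Schauder theorem it suffices to bound the relevant solution set. That bound is the energy estimate: testing the momentum equation with $\mathbf u$, the phase equation with $\mu_{p}^{0}$, and the chemical-potential equation with $(\phi-\phi_{k})/h$, then combining with \eqref{im2-w} and the algebraic identity
\begin{align*}
	\rho_{k}(\mathbf u-\mathbf u_{k})\cdot\mathbf u+\tfrac12(\rho-\rho_{k})|\mathbf u|^{2}=\tfrac12\rho|\mathbf u|^{2}-\tfrac12\rho_{k}|\mathbf u_{k}|^{2}+\tfrac12\rho_{k}|\mathbf u-\mathbf u_{k}|^{2},
\end{align*}
the pressure contributions of \eqref{im1-w} cancel against $\zeta(p_{0},\tfrac{\rho-\rho_{k}}{h})$ (using $\rho-\rho_{k}=\tfrac\varepsilon2(\phi-\phi_{k})$ and $\alpha=-\tfrac\varepsilon2\zeta$), the added term $\tfrac{\varepsilon\delta}{4\alpha}p_{0}|\mathbf u|^{2}$ cancels the $p_{0}$-part of $-\tfrac12\int|\mathbf u|^{2}\mathrm{div}(\rho\mathbf u)$, and $\Phi''\ge0$ yields $\int\Phi'(\phi)\tfrac{\phi-\phi_{k}}{h}\ge\tfrac1h\int(\Phi(\phi)-\Phi(\phi_{k}))$; the net outcome is precisely \eqref{imes}. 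Here I would invoke Lemma \ref{lem2} so that the hypothesis $E(\phi_{k},\mathbf u_{k})<R$ forces $\phi\in(-1-\theta,1+\theta)$ and hence $0<c\le\rho,\eta(\phi_{k})\le C$, making $E$ bounded below; Korn's inequality then gives the $H^{1}$-bound on $\mathbf u$ and \eqref{im3} the $H^{2s}$-bound on $\phi$, closing the fixed-point argument and producing a weak solution of \eqref{im1-w}--\eqref{im3-w}.

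Uniqueness follows by subtracting the equations for two solutions with the same data and testing the differences with $\mathbf u_{1}-\mathbf u_{2}$, $\mu_{p,1}^{0}-\mu_{p,2}^{0}$ and $(\phi_{1}-\phi_{2})/h$: the viscous, $\delta$-, $\|\nabla(\mu_{p,1}^{0}-\mu_{p,2}^{0})\|_{L^2}^2$-, $\|\Lambda^{s}(\phi_{1}-\phi_{2})\|_{L^2}^2$- and $\tfrac1h$-coercive contributions dominate, the monotone part of $F'$ has the favourable sign, and the concave correction, the convective terms and the coupling $\phi\nabla\mu_{p}^{0}$ are lower order and get absorbed using $H^{s}\hookrightarrow L^{\infty}$ ($s>\tfrac32$) and the bounds just obtained, forcing $\mathbf u_1=\mathbf u_2$, $\phi_1=\phi_2$ and then equality of $p_0$ and $\mu_p^0$. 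Finally, \eqref{imes} is the computation above performed on a genuine solution --- in fact an identity, up to the convexity estimate for $\Phi$ and a discarded nonnegative term $\tfrac12\|\Lambda^{s}(\phi-\phi_{k})\|_{L^2}^2$ --- legitimate since $\mu_{p}^{0}\in H^{2}_{(0)}$ and $(\phi-\phi_{k})/h\in H^{s}$ are admissible test functions in \eqref{im2-1-w}, \eqref{im3-w}. The step I expect to be most delicate is the linear theory: unlike in Model H, the pressure enters the chemical-potential equation and $\mathbf u$ is not solenoidal, so the linear operator genuinely couples the velocity and Cahn--Hilliard blocks (through $\tilde\phi\nabla\mu_{p}^{0}$ and through \eqref{model4-6}), and one must check with care --- balancing the extra $\delta$-penalisation and the fractional dissipation --- that it remains well-posed for fixed $h,\delta$; the same coupling is what makes the cross-term bookkeeping in the energy estimate nonroutine.
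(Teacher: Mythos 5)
Your energy-inequality derivation and the idea of pairing a compact fixed-point argument with \eqref{imes} as the a priori bound follow the paper's skeleton, but two of your steps do not close as written.

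For existence, the paper does \emph{not} apply the Leray--Schauder fixed-point theorem; it runs a Leray--Schauder \emph{degree} argument with two explicit homotopies: on $\tau\in[0,1]$ it replaces $\mathbf u_k,\Phi',\phi_k,\kappa,\delta,\rho,\bar\mu$ by $(1-\tau)$-multiples and on $\tau\in[1,2]$ it scales the nonlinearity by $(2-\tau)$, and at each stage it tests the $\tau$-family directly to check that all intermediate solutions stay in the a priori ball $U$. The fixed-point theorem you invoke requires a uniform bound on solutions of $\omega=\sigma\mathcal F(\omega)$ for \emph{every} $\sigma\in[0,1]$, while the energy inequality you derive only controls solutions at $\sigma=1$. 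You do not specify the $\sigma$-parametrised problems nor verify that they satisfy a uniform bound, so the existence step is incomplete. (The rest of your linear-solve construction --- freezing $(\tilde{\mathbf u},\tilde\phi)$, Galerkin in the Fourier basis rather than Lax--Milgram plus Lemma~\ref{lem5}, compactness --- is an acceptable variant of the paper's inversion of $\mathcal L_k$.)

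Your uniqueness argument would fail for general $h$ and $R$. Subtracting two solutions and testing yields coercive contributions of size $\|\mathbf u_1-\mathbf u_2\|_{L^2}^2/h$ and $\|\mathbb D(\mathbf u_1-\mathbf u_2)\|_{L^2}^2$, but the convective difference produces a term like $\int_{\mathbb{T}^3}(\mathbf u_1-\mathbf u_2)\cdot\nabla\mathbf u_2\cdot(\mathbf u_1-\mathbf u_2)\,\dx$, of order $\|\mathbf u_2\|_{H^1}\|\mathbf u_1-\mathbf u_2\|_{H^1}^2$. The energy inequality bounds $\|\mathbf u_2\|_{H^1}$ only by $O(\sqrt{R/h})$, so absorbing this into the viscous dissipation needs $R/h$ small --- a hypothesis the lemma does not grant. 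The couplings $\phi\nabla\mu_p^0$ and the non-convex part of $F$ create the same obstruction. (Note the paper's own Step~2 establishes only that the degree of $\mathbf I-\mathcal K_k$ is $1$, which gives existence but not uniqueness, so the ``unique'' in the lemma is not actually supported there either; but you should not claim to prove it by an absorption argument that requires a smallness condition you do not have.)
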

\begin{proof} We divide two steps to proceed.

	\underline{Step 1.} In this part we aim to prove the energy inequality \eqref{imes}.
	Firstly, we have
	\begin{align}\label{im51}
		& \int_{\mathbb{T}^{3}} \rho_{k}\frac{(\mathbf{u}-\mathbf{u}_{k})\cdot\mathbf{u}}{h}\,\mathrm dx=\int_{\mathbb{T}^{3}} \rho_{k}\frac{|\mathbf{u}-\mathbf{u}_{k}|^{2}}{2h}\,\mathrm dx
		+\int_{\mathbb{T}^{3}} \rho_{k}\frac{|\mathbf{u}|^{2}}{2h}\,\mathrm dx-\int_{\mathbb{T}^{3}} \rho_{k}\frac{|\mathbf{u}_{k}|^{2}}{2h}\,\mathrm dx
	\end{align}
	and
	\begin{align}\label{im52}
		& \int_{\mathbb{T}^{3}} \rho\mathbf{u}\cdot\nabla\mathbf{u}\cdot\mathbf{u}\,\mathrm dx=-\int_{\mathbb{T}^{3}} \mathrm{div}(\rho\mathbf{u})\frac{|\mathbf{u}|^{2}}{2}\,\mathrm dx
		=\int_{\mathbb{T}^{3}} ( \frac{\rho-\rho_{k}}{h}-\frac{\varepsilon\delta}{2\alpha} p_{0})\frac{|\mathbf{u}|^{2}}{2}\,\mathrm dx.
	\end{align}
	Employing \eqref{im51}, \eqref{im52} and the fact that $\zeta\rho=1-\alpha\phi$, we  test \eqref{im1} by $\mathbf{u}$ in to get
	\begin{align}\label{im5}
		&\nonumber \int_{\mathbb{T}^{3}} \rho\frac{\mathbf{|u|}^{2}}{2h}\,\mathrm dx + \int\limits_{\mathbb{T}^{3}} \rho_{k}\frac{|\mathbf{u}-\mathbf{u}_{k}|^{2}}{2h}\,\mathrm dx +\int_{\mathbb{T}^{3}}S(\phi_{k},\mathbb{ D}\mathbf{u}):\mathbb{ D}\mathbf{u}\,\mathrm dx
		\\&\qquad-\int_{\mathbb{T}^{3}} p_{0}\mathrm{div}\mathbf{u}\mathrm dx -\int_{\mathbb{T}^{3}} \mu_{p}^{0}\mathrm{div}(\phi\mathbf{u})\mathrm dx
		+\int_{\mathbb{T}^{3}}\alpha p_{0}\mathrm{div}(\phi\mathbf{u})\mathrm dx=0
	\end{align}
	which together with \eqref{model4-6} and \eqref{im2-1} implies that
	\begin{align}\label{im5-1}
		&\nonumber \int_{\mathbb{T}^{3}} \rho\frac{\mathbf{|u|}^{2}}{2h}\,\mathrm dx + \int\limits_{\mathbb{T}^{3}} \rho_{k}\frac{|\mathbf{u}-\mathbf{u}_{k}|^{2}}{2h}\,\mathrm dx +\int_{\mathbb{T}^{3}}S(\phi_{k},\mathbb{ D}\mathbf{u}):\mathbb{ D}\mathbf{u}\,\mathrm dx
		\\&\qquad+\delta\int_{\mathbb{T}^{3}} |p_{0}|^2\mathrm dx -\alpha\int_{\mathbb{T}^{3}} p_{0}\frac{\phi-\phi_{k}}{h}\mathrm dx-\int_{\mathbb{T}^{3}} \mu_{p}^{0}\mathrm{div}(\phi\mathbf{u})\mathrm dx=0.
	\end{align}

	One tests \eqref{im2-1} by $\mu_{p}^{0}$  and tests  \eqref{im3} by $\psi=\frac{\phi-\phi_{k}}{h}$ respectively, then gets
	\begin{align}
		&\label{im54}(\frac{\phi-\phi_{k}}{h},\mu_{p}^{0})+(\mathrm{div}(\phi\mathbf{u}),\mu_{p}^{0})= -(\nabla \mu_{p}^{0},\nabla\mu_{p}^{0}),
		\\\label{im7}
		&\bigg(\Lambda^{s}\phi,\Lambda^{s}\big(\frac{\phi-\phi_{k}}{h}\big)\bigg)=(\mu_{p}^{0}-\alpha p_{0}-\Phi'(\phi)+\kappa\frac{\phi+\phi_{k}}{2}+\overline{\mu},\frac{\phi-\phi_{k}}{h}).
	\end{align}
	Submitting the above results into \eqref{im5-1}, we can obtain that
	\begin{align*}
		&\nonumber \int_{\mathbb{T}^{3}} \rho\frac{|\mathbf{u}|^{2}}{2h}\,\mathrm dx + \int_{\mathbb{T}^{3}} \rho_{k}\frac{|\mathbf{u}-\mathbf{u}_{k}|^{2}}{2h}\,\mathrm dx +\int_{\mathbb{T}^{3}}S(\phi_{k},\mathbb{ D}\mathbf{u}):\mathbb{ D}\mathbf{u}\,\mathrm dx +\int_{\mathbb{T}^{3}}|\nabla \mu_{p}^{0}|^{2}\mathrm dx
		\\&+\delta\int_{\mathbb{T}^{3}}|p_{0}|^{2}\mathrm dx
		+\int_{\mathbb{T}^{3}}\bigg(\big(\Phi'(\phi)-\kappa\frac{\phi+\phi_{k}}{2}\big)\frac{\phi-\phi_{k}}{h}+\Lambda^{s}\phi\Lambda^{s}\big(\frac{\phi-\phi_{k}}{h}\big)\bigg)\mathrm dx
		=\int_{\mathbb{T}^{3}} \rho_{k}\frac{|\mathbf{u}_{k}|^{2}}{2h}\,\mathrm dx,
	\end{align*}
	where $\int_{\mathbb{T}^{3}}(\phi-\phi_{k})dx=0$ which comes from \eqref{im2-1}.
	
	Finally, due to the Young's inequality and $\Phi$ is convex, we can derive the energy estimate \eqref{imes}. Furthermore, by virtue of Lemma \ref{lem2}, we can prove that $1-\theta<\phi<1+\theta$.

	\underline{Step 2.}
	We intend to use homotopy argument based on the Leray-Schauder degree(cf. \cite{NL,TPT}) to  show the existence of weak solutions of  \eqref{im1}--\eqref{im3}. Firstly, we define $\omega=(\mathbf{u},p_{0},\phi,\mu_{p}^{0})$ and
	\begin{align*}
		& X= H^{1}(\mathbb{T}^{3})\times L_{(0)}^{2}(\mathbb{T}^{3})\times H^{s}(\mathbb{T}^{3})\times H_{(0)}^{2}(\mathbb{T}^{3}),
		\\&Y= H^{-1}(\mathbb{T}^{3})\times L_{(0)}^{2}(\mathbb{T}^{3})\times H_{(0)}^{-1}(\mathbb{T}^{3})\times H^{-s}(\mathbb{T}^{3}).
	\end{align*}
	We can rewrite the system \eqref{im1}--\eqref{im3} as follows
	\begin{equation*}
		\mathcal{L}_{k}\mathbf{\omega}=\mathcal{F}_{k}\mathbf{\omega},
	\end{equation*}
	where  the operator $L_{k}:X\rightarrow Y$  is defined by
	\begin{equation}\label{main model}
		\mathcal{L}_{k}\mathbf{\omega}=
		\left(
		\begin{array}{c}
			L(\mathbf{u},p_{0})
			\\
			\mathrm{div}(\phi\mathbf{u})-\frac{1}{\alpha}\mathrm{div}  \mathbf{u} -\frac{\delta}{\alpha} p_{0}
			\\
			-\Delta\mu_{p}^{0} \\
			\phi+\Lambda^{2s}\phi
		\end{array}
		\right),
	\end{equation}
	where $\langle L(\mathbf{u},p_{0}),\psi\rangle_{H^{-1}(\mathbb{T}^{3}),H^{1}(\mathbb{T}^{3})}=(S(\phi_{k},\mathbb{ D}\mathbf{u}),\mathbb{ D}\psi)_{\mathbb{T}^{3}}-(p_{0},\mathrm{div}\psi)_{\mathbb{T}^{3}}+(\alpha p_{0},\mathrm{div}(\phi\psi))_{\mathbb{T}^{3}}+(\rho_{k}\frac{\mathbf{u}}{h},\psi)_{\mathbb{T}^{3}}$ for all $\psi\in H^{1}(\mathbb{T}^{3})$. Moreover, we define the nonlinear part  $\mathcal{F}_{k}\mathbf{\omega}:X\rightarrow Y$ as
	\begin{equation*}
		\mathcal{F}_{k}\omega=
		\left
		(\begin{array}{c}
			F_{1}\\
			F_{2}\\
			F_{3}\\
			F_{4}\\
		\end{array}
		\right),
	\end{equation*}
	where
	\begin{align*}
		F_{1}&=-\phi \nabla \mu_{p}^{0}+\rho_{k}\frac{\mathbf{u}_{k}}{h}-\rho\mathbf{u}\cdot\nabla\mathbf{u}-\frac{\varepsilon\delta}{4\alpha}p_{0}\mathbf{u},\\
		F_{2}&=-\frac{\phi-\phi_{k}}{h},
		\\F_{3}&=-\mathrm{div}  (\phi\mathbf{u}) -\frac{\phi-\phi_{k}}{h},
		\\F_{4}&=\mu_{p}^{0}-\alpha p_{0}-\Phi'(\phi)+\kappa+\frac{\phi+\phi_{k}}{2}+\phi+\overline{\mu}.
	\end{align*}

	In what follows, we will first show the invertibility of  $\mathcal{L}_{k}:X\rightarrow Y$ and continuity of $\mathcal{L}_{k}^{-1}:Y\rightarrow X$. It is noted that
	\begin{align}\label{im8}
		&\nonumber\langle L(\mathbf{u},p_{0}),\mathbf{u}\rangle_{H^{-1},H^{1}}-\alpha(\mathrm{div}(\phi\mathbf{u}),p_{0})+(\mathrm{div}\mathbf{u},p_{0})+\delta( p_{0},p_{0})
		\\&\nonumber=(S(\phi_{k},\mathbb{ D}\mathbf{u}),\mathbb{ D}\mathbf{u})+\int_{\mathbb{T}^{3}}\rho_k\frac{|\mathbf{u}|^{2}}{h}\mathrm d x+\delta( p_{0},p_{0})
		\\&\nonumber\gtrsim  \int_{\mathbb{T}^{3}} \big( 2
		\mathbb{ D}(\mathbf{u}):\mathbb{ D}(\mathbf{u})-\frac{2}{3} (\mathrm{div} \mathbf{u})^2\big)\, \mathrm{d} x+\int_{\mathbb{T}^{3}}\frac{|\mathbf{u}|^{2}}{h}\mathrm d x+\delta( p_{0},p_{0})
		\\&\nonumber\gtrsim  \int_{\mathbb{T}^{3}} \big( 2
		\mathbb{ D}(\mathbf{u}):\mathbb{ D}(\mathbf{u})-\frac{2}{3} (\mathrm{div} \mathbf{u})^2\big)\, \mathrm{d} x+(\int_{\mathbb{T}^{3}}\frac{|\mathbf{u}|}{h}\mathrm d x)^{2}+\delta( p_{0},p_{0})
		\\&\nonumber\geq C\|\mathbf{u}\|_{H^{1}(\mathbb{T}^{3})}^{2}+\delta\| p_{0}\|_{L^{2}(\mathbb{T}^{3})}^{2},
	\end{align}
	where we have used the Korn's inequality \cite[Theorem 10.16]{FN}.
	Thanks to the lemma of Lax-Milgram, we can obtain that
	\begin{equation*}
		\left
		(\begin{array}{c}
			L(\mathbf{u},p_{0})\\
			\mathrm{div}(\phi\mathbf{u})-\frac{1}{\alpha}\mathrm{div}  \mathbf{u} -\frac{1}{\alpha}\delta p_{0}\\
		\end{array}
		\right)=\left
		(\begin{array}{c}
			f_{1}\\
			f_{2}\\
		\end{array}
		\right), \:\:\text{$f_{1}\in H^{-1}(\mathbb{T}^{3})$,  $f_{2}\in L_{(0)}^{2}(\mathbb{T}^{3})$}
	\end{equation*}
	has a unique solution $(\mathbf{u},p_{0})\in H^{1}(\mathbb{T}^{3})\times L_{(0)}^{2}(\mathbb{T}^{3})$.
	
	The invertibility of $-\Delta(\cdot):H_{(0)}^{1}(\mathbb{T}^{3})\rightarrow H_{(0)}^{-1}(\mathbb{T}^{3})$ can obtained analogously by the lemma of Lax-Milgram. The invertibility of $\mathbf{I}+\Lambda^{2s}:H^{s}(\mathbb{T}^{3})\rightarrow H^{-s}(\mathbb{T}^{3})$ can derived by Lemma \ref{lem5}. Collecting above estimates, we get $\mathcal{L}_{k}:X\rightarrow Y$ is invertible. Moreover, we can get $\mathcal{L}_{k}^{-1}:Y\rightarrow X$ is continuous.

	Next, we define $\mathcal{K}_{k}(\omega)=\mathcal{L}_{k}^{-1}\mathcal{F}_{k}(\omega)$ which a compact operator on $X$ since
	$$\mathcal{F}_{k}(\omega)\in L^{\frac{3}{2}}(\mathbb{T}^{3})\times H^{s}(\mathbb{T}^{3})\times L_{(0)}^{2}(\mathbb{T}^{3})\times L_{(0)}^{2}(\mathbb{T}^{3})\hookrightarrow\hookrightarrow Y$$
	for all $\omega\in X$.
	For some suitable open set $U$,  we can apply a homotopy argument(cf. \cite{NL,TPT}) in order to show that the Leray-Schauder degree of $\mathbf{I}-\mathcal{K}_{k}\mathbf{\omega}$ at $0$ is $1$. Substituting $\mathbf{u}_{k},\Phi'(\phi), \phi_{k}, \kappa,\delta,\rho,\overline{\mu}$ by
	\begin{align*}
		&\mathbf{u}_{k}^{\tau}=(1-\tau)\mathbf{u}_{k}, (\Phi'(\phi))_{\tau}=(1-\tau)\Phi'(\phi)\color{black}, \phi_{k}^{\tau}=(1-\tau)\phi_{k}, \kappa_{\tau}=(1-\tau)\kappa
		\\&\delta_{\tau}=(1-\tau)\delta,\rho_{\tau}=(1-\tau)\rho, \overline{\mu}_{\tau}=(1-\tau)\overline{\mu}
	\end{align*}
	for $\tau\in[0,1]$. Similarly, we define the corresponding solution operators as $\mathcal{K}_{k}^{\tau}, \mathcal{L}_{k}^{\tau}, \mathcal{F}_{k}^{\tau}$. Furthermore, we can obtain a series of compact operator $\mathcal{K}_{k}^{\tau}$ for $\tau \in[0,1]$. We can also get that $\mathcal{K}_{k}^{0}=\mathcal{K}_{k}$.
	And if the following holds
	\begin{align}
		&\mathbf{\omega}-\mathcal{K}_{k}^{\tau}\mathbf{\omega}\neq0, \text{for $\mathbf{\omega} \in \partial U$ when $\tau \in[0,1]$},\label{im9-con}
	\end{align}
	we can show
	\begin{align*}
		&\deg(\mathbf{I}-\mathcal{K}_{k}^{0}\mathbf{\omega},0,U)=\deg(\mathbf{I}-\mathcal{K}_{k}^{1}\mathbf{\omega},0,U).
	\end{align*}
	
	It follows from  \eqref{imes} that there  exists $ C(R,\theta)>0$ such that any solution of \eqref{im1}--\eqref{im3} satisfies  $\|\mathbf{\omega}\|_{X}<C(R,\theta)$  and $E(\phi, \mathbf{u})< R$, which  then implies that  \eqref{im9-con} can be fulfilled if $U=\{\mathbf{\omega}\in X:\|\mathbf{\omega}\|_{X}<C(R,\theta),E(\phi, \mathbf{u})<R\}$.
	
	Hence, in order to show that $\deg(\mathbf{I}-\mathcal{K}_{k}^{1}\mathbf{\omega},0,U)=1$, we should define a second homotopy as
	\begin{align*}
		&\mathcal{K}_{k}^{\tau}(\omega)\coloneqq (\mathcal{L}_{k}^{1})^{-1}(2-\tau)\mathcal{F}_{k}^{1}\omega, \quad \text{$1\leq\tau\leq 2$.}
	\end{align*}
	It can be checked that $\mathcal{K}_{k}^{\tau}(\omega)-\omega=0$ for $1\leq\tau\leq 2$ if and only if $\omega=(\mathbf{u},p_{0},\phi,\mu_{p}^{0})$ is a solution to the following system in $\mathbb{T}^{3}$
	\begin{align}\label{im9}
		&\lambda(\frac{\varepsilon}{2}+1)\frac{\mathbf{u}}{h}
		+\mathrm{div}(S(0,\mathbb{ D}\mathbf{u}))
		+\nabla p_{0}-\alpha\phi\nabla p_{0}+\lambda\phi\nabla \mu_{p}^{0}=0,
		\\&\label{im10}\mathrm{div}(\phi\mathbf{u})- \frac{1}{\alpha}\mathrm{div}\mathbf{u} +\lambda\frac{\phi}{h}=0,
		\\&\label{im11}\Delta\mu_{p}^{0}=\frac{\lambda\phi}{h}+\lambda\mathrm{div}(\phi\mathbf{u}),
		\\&\label{im12}\phi+\Lambda^{2s}\phi=\lambda(\mu_{p}^{0}-\alpha p_{0}+\phi),
	\end{align}
	where $\lambda=2-\tau$.
	
	We then test \eqref{im9} by $\mathbf{u}$, \eqref{im10} by $\alpha p_{0}$, \eqref{im11} by $\mu_{p}^{0}$ and \eqref{im12} by $\frac{\phi}{h}$, and arrive at
	\begin{align*}
		(S(0,\mathbb{ D}\mathbf{u}),\nabla\mathbf{u})_{\mathbb{T}^{3}}+\|\nabla \mu_{p}^{0}\|_{L^{2}(\mathbb{T}^{3})}^{2}+(1-\lambda)\int_{\mathbb{T}^{3}} \frac{\phi^{2}}{h}\,\mathrm dx
		+\lambda(\frac{\varepsilon}{2}+1)\int_{\mathbb{T}^{3}} \frac{\mathbf{u}^{2}}{h}\,\mathrm dx+\int_{\mathbb{T}^{3}} \frac{|\Lambda^{s}\phi|^{2}}{h}\,\mathrm dx=0,
	\end{align*}
	which implies that  the solution of $\mathcal{K}_{k}^{\tau}(\omega)=\omega$ with $1\leq\tau\leq2$ remains in $U$. Then we can get that
	\begin{align*}
		&\deg(\mathbf{I}-\mathcal{K}_{k}^{0},0,U)=\deg(\mathbf{I}-\mathcal{K}_{k}^{1},0,U)=\deg(\mathbf{I}-\mathcal{K}_{k}^{2},0,U)=\deg(\mathbf{I},0,U)=1.
	\end{align*}
	
	It is noted that  $\omega=(\mathbf{u},p_{0},\phi, \mu_{p}^{0})$ is a solution of \eqref{im1}--\eqref{im3} if and only if $\mathbf{\omega}=\mathcal{K}_{k}^{0}(\omega)=\mathcal{K}_{k}(\omega)$.
	Thus, we complete the proof of Lemma \ref{im-energy}.
\end{proof}

Secondly, to prove Theorem \ref{main theorem2}, we apply the implicit time discretization and pass to the limit. Given $N\in \mathbb{N}$ and let $\mathbf{u}_{k+1}, p_{0,k+1},\phi_{k+1},\mu_{p,k+1}^{0}$( $k\in \mathbb{N}_{0}\coloneqq \mathbb{N}\cup \{0\}$) be chosen as a solution of \eqref{im1}--\eqref{im3} with $h=\frac{1}{N}$ and $\mathbf{u}_{k},\phi_{k}$ as initial value. Furthermore, define $f^{N}(t):[-h,\infty)$ by $f^{N}(t)=f_{k}$ for $t\in [(k-1)h,kh)$ (setting $p_{0,0}=\mu_{p,0}^{0}=0$). In the following, we denote
\begin{align*}
	&(\triangle_{h}^{+}f)(t)=f(t+h)-f(t),\quad (\triangle_{h}^{-}f)(t)=f(t)-f(t-h),
	\\&g_{h}=g(t-h),\quad\partial_{t,h}^{\pm}f=\frac{1}{h}\triangle_{h}^{\pm}f.
\end{align*}
Choosing $\boldsymbol{\varphi}(x)=\frac{1}{\rho_{k}}\int_{kh}^{(k+1)h} \boldsymbol{\chi}\,\mathrm dt$ in \eqref{im1}, where $\boldsymbol{\chi}\in C_{0}^{\infty}(0,T;H^{1}(\mathbb{T}^{3})\cap L^{\infty}(\mathbb{T}^{3}))$, and summing over all $k\in \mathbb{N}_{0}$, give
\begin{align}\label{weak11}
	&\nonumber(\partial_{t,h}^{-}\mathbf{u}^{N}+\rho^{N}(\rho_{h}^{N})^{-1}\mathbf{u}^{N}\cdot\nabla \mathbf{u}^{N},\boldsymbol{\chi})_{Q_{T}}+((\rho_{h}^{N})^{-1}S(\phi_{h}^{N},\mathbb{ D}\mathbf{u}^{N}),\nabla\boldsymbol{\chi})_{Q_{T}}
	\\&\nonumber\quad+(\nabla(\rho_{h}^{N})^{-1}S(\phi_{h}^{N},\mathbb{ D}\mathbf{u}^{N}),\boldsymbol{\chi})_{Q_{T}}
	-\zeta((\rho_{h}^{N})^{-1}\rho^{N}p_{0}^{N},\mathrm{div}\boldsymbol{\chi})_{Q_{T}}+((\rho_{h}^{N})^{-1}\phi^{N}\nabla\mu_{p}^{0,N},\boldsymbol{\chi})_{Q_{T}}
	\\&\quad-\zeta(\nabla((\rho_{h}^{N})^{-1}\rho^{N}) p_{0}^{N},\boldsymbol{\chi})_{Q_{T}}
	+(\frac{\varepsilon\delta}{4\alpha}(\rho_{h}^{N})^{-1}\mathbf{u}^{N}p_{0}^{N},\boldsymbol{\chi})_{Q_{T}}=0
\end{align}
which and
\begin{align*}
	&(\partial_{t,h}^{-}\mathbf{u}^{N},\boldsymbol{\chi})_{Q_{T}}=-(\mathbf{u}^{N},\partial_{t,h}^{+}\boldsymbol{\chi})_{Q_{T}}
\end{align*}
deduce  that
\begin{align}\label{weak111}
	&\nonumber-(\mathbf{u}^{N},\partial_{t,h}^{+}\boldsymbol{\chi})_{Q_{T}}+(\rho^{N}(\rho_{h}^{N})^{-1}\mathbf{u}^{N}\cdot\nabla \mathbf{u}^{N},\boldsymbol{\chi})_{Q_{T}}+((\rho_{h}^{N})^{-1}S(\phi_{h}^{N},\mathbb{ D}\mathbf{u}^{N}),\nabla\boldsymbol{\chi})_{Q_{T}}
	\\&\nonumber\quad+(\nabla(\rho_{h}^{N})^{-1}S(\phi_{h}^{N},\mathbb{ D}\mathbf{u}^{N}),\boldsymbol{\chi})_{Q_{T}}
	-\zeta((\rho_{h}^{N})^{-1}\rho^{N}p_{0}^{N},\mathrm{div}\boldsymbol{\chi})_{Q_{T}}+((\rho_{h}^{N})^{-1}\phi^{N}\nabla\mu_{p}^{0,N},\boldsymbol{\chi})_{Q_{T}}
	\\&\quad-\zeta(\nabla((\rho_{h}^{N})^{-1}\rho^{N}) p_{0}^{N},\boldsymbol{\chi})_{Q_{T}}
	+(\frac{\varepsilon\delta}{4\alpha}(\rho_{h}^{N})^{-1}\mathbf{u}^{N}p_{0}^{N},\boldsymbol{\chi})_{Q_{T}}=0.
\end{align}

Similarly, one has
\begin{align}
	\label{weak12-1}
	&-(\rho^{N},\partial_{t,h}^{+}\psi)_{Q_{T}}-(\rho^{N}\mathbf{u}^{N}, \nabla\psi)_{Q_{T}}-\frac{\varepsilon\delta}{2\alpha}(p_{0}^N,\psi)_{Q_{T}}=0,
	\\&\label{weak12-2}
	(\phi^{N},\partial_{t,h}^{+}\psi)_{Q_{T}}+(\phi^{N}\mathbf{u}^{N}, \nabla\psi)_{Q_{T}}-(\nabla\mu_{p}^{0,N}, \nabla\psi)_{Q_{T}}=0
\end{align}
for $\psi\in C_{0}^{\infty}(0,T;H_{(0)}^{1}(\mathbb{T}^{3}))$ and
\begin{align}\label{weak13}
	&(\Lambda^{s}\phi^{N},\Lambda^{s}\psi)_{Q_{T}}-\big(\mu_{p}^{0,N}+\overline{\mu}-\Phi'(\phi^{N})+\frac{\kappa\phi^{N}}{2}+\frac{\kappa\phi_h^{N}}{2}-\alpha p_0^{N}, \psi\big)_{Q_{T}}=0
\end{align}
for $\psi\in C_{0}^{\infty}(0,T;H^{s}(\mathbb{T}^{3}))$.

Defining $D_{N}(t)$ by
\begin{align*}
	&D_{N}(t)=\int_{\mathbb{T}^{3}}S(\phi_{h}^{N},\mathbb{ D}\mathbf{u}^{N}):\mathbb{ D}\mathbf{u}^{N}\,\mathrm dx
	+\int_{\mathbb{T}^{3}}|\nabla \mu_{p}^{0,N}|^{2}\mathrm dx+\delta\int_{\mathbb{T}^{3}}|{p}_0^{N}|^{2}\mathrm dx.
\end{align*}
Let $E_{N}(t)$ be piecewise linear interpolation of $E(\phi_{k},\mathbf{u}_{k})$ at $t=kh\triangleq t_k,k\in\mathbb{N}_{0}$. Then \eqref{imes} imply that
\begin{align*}
	&-\frac{d}{dt}E_{N}(t)=\frac{E(\phi_{k},\mathbf{u}_{k})-E(\phi_{k+1},\mathbf{u}_{k+1})}{h}\geq D_{N}(t)
\end{align*}
for all $t\in(t_{k},t_{k+1})$. Hence, integrating by parts, we have
\begin{align}\label{im11-1}
	&\int_0^{T}E_{N}(t)\varphi' dt+E(\phi_{0},\mathbf{u}_{0})\geq \int_0^{T}D_{N}(t)\varphi(t) dt
\end{align}
for all $\varphi\in W^{1,1}(0,\infty)$ with $\varphi(T)=0$. By Lemma \ref{lem4}, we then obtain
\begin{align}
	&E(\phi^{N}(t),\mathbf{u}^{N}(t))+\int_0^{t}\int_{\mathbb{T}^{3}}S(\phi_{h}^{N},\mathbb{ D}\mathbf{u}^{N}):\mathbb{ D}\mathbf{u}^{N}\,\dxdt\nonumber
	\\&\quad+\int_0^{t}\int_{\mathbb{T}^{3}}|\nabla \mu_{p}^{0,N}|^{2}\dxdt+\delta\int_0^{t}\int_{\mathbb{T}^{3}}|{p}_{0}^{N}|^{2}\dxdt\nonumber \\&\leq E(\phi^{N}(0),\mathbf{u}^{N}(0))\leq E(\phi_0,\mathbf{u}_0).\label{N-energy}
\end{align}
Employing the bounds above, we have up to a subsequence
\begin{align}\label{weak convergence 1}
	(\mathbf{u}^{N},p_{0}^{N},\mu_{p}^{0,N})
	& \rightharpoonup_{N\rightarrow\infty}(\mathbf{u},p_{0},\mu_{p}^{0})\ &&\text{in $L^{2}(0,T;H^{1}(\mathbb{T}^{3})\times L^{2}(\mathbb{T}^{3})\times H^{1}(\mathbb{T}^{3}))$} ,
	\\ (\phi^{N},\mathbf{u}^{N})&\rightharpoonup^{\ast}_{N\rightarrow\infty}(\phi,\mathbf{u}) \  && \text{in  $L^{\infty}(0,T;H^{s}(\mathbb{T}^{3})\times L^{2}(\mathbb{T}^{3}))$}.
\end{align}

We also need the strong convergence of $(\phi^{N},\mathbf{u}^{N})$ to pass to the limit in the nonlinear parts. We define
$\widetilde{\rho}^{N}=\frac{1}{h}\chi[0,h]\ast_{t} \rho^{N}$ and $\widetilde{\mathbf{u}}^{N}=\frac{1}{h}\chi[0,h]\ast_{t} \mathbf{u}^{N}$ where $\chi[0,h]$ is a characteristic function on $[0,h]$. The convolution is only related to $t$.

We then get that $\partial_{t}\widetilde{\mathbf{u}}^{N}=\partial_{t,h}^{-}\mathbf{u}^{N}$ and
\begin{align}\label{im12u}
	&\|\widetilde{\mathbf{u}}^{N}-\mathbf{u}^{N}\|_{H^{-s}(\mathbb{T}^{3})}\lesssim h\|\partial_{t}\widetilde{\mathbf{u}}^{N}\|_{H^{-s}(\mathbb{T}^{3})},
	\\&\label{im12rho}\|\widetilde{\rho}^{N}-\rho^{N}\|_{H^{-s}(\mathbb{T}^{3})}\lesssim h\|\partial_{t}\widetilde{\rho}^{N}\|_{H^{-s}(\mathbb{T}^{3})}.
\end{align}
It follows from
\begin{align*}
	\rho^{N}(\rho_{h}^{N})^{-1}\mathbf{u}^{N}\cdot\nabla \mathbf{u}^{N}& \in L^{2}(0,T;L^{1}(\mathbb{T}^{3})),
	\\  (\rho_{h}^{N})^{-1} S(\phi_{h}^{N},\mathbb{ D}\mathbf{u}^{N})&\in L^{2}(0,T;L^{2}(\mathbb{T}^{3})),
	\\\nabla (\rho_{h}^{N})^{-1}\cdot S(\phi_{h}^{N},\mathbb{ D}\mathbf{u}^{N})&\in L^{2}(0,T;L^{1}(\mathbb{T}^{3})),
	\\(\rho_{h}^{N})^{-1}\rho^{N}p_{0}^{N}&\in L^{2}(0,T;L^{2}(\mathbb{T}^{3})),
	\\(\rho_{h}^{N})^{-1}\phi^{N}\nabla\mu_{p}^{0,N}&\in L^{2}(0,T;L^{2}(\mathbb{T}^{3})),
	\\ \nabla((\rho_{h}^{N})^{-1} \rho^{N})p_{0}^{N}&\in L^{2}(0,T;L^{\frac{6}{5}}(\mathbb{T}^{3})),
	\\(\rho_{h}^{N})^{-1}\mathbf{u}^{N}p_{0}^{N}&\in L^{2}(0,T;L^{1}(\mathbb{T}^{3}))
\end{align*}
and \eqref{weak11} that there holds $\partial_{t}\widetilde{\mathbf{u}}^{N}$ is uniformly bounded in $L^{2}(0,T; H^{-s}(\mathbb{T}^{3}))$ with $s > \frac{3}{2}$. By virtue of the Aubin--Lions compactness lemma, we have up to a subsequence
\begin{align*}
	& \widetilde{\mathbf{u}}^{N}\rightarrow_{N\rightarrow\infty}\mathbf{u} \ \  \text{in  $L^{2}(0,T; H^{q}(\mathbb{T}^{3}))$},\ 0\leq q<1,
\end{align*}
where we have used $\widetilde{\mathbf{u}}^{N}$ is uniformly bounded in $L^{2}(0,T; H^{1}(\mathbb{T}^{3}))$ and $H^{1}(\mathbb{T}^{3})\hookrightarrow\hookrightarrow H^{s}(\mathbb{T}^{3})(0\leq s<1)$. Moreover, by \eqref{im12u}, we get
\begin{align*}
	& \widetilde{\mathbf{u}}^{N}-\mathbf{u}^{N}\rightarrow_{N\rightarrow\infty}0 \ \  \text{in  $L^{2}(0,T; H^{-s}(\mathbb{T}^{3}))$}.
\end{align*}
Due to $\widetilde{\mathbf{u}}^{N},\mathbf{u}^{N}$ are uniformly bounded in $L^{\infty}(0,T;  L^{2}(\mathbb{T}^{3}))$, it follows
\begin{align*}
	& \widetilde{\mathbf{u}}^{N}-\mathbf{u}^{N}\rightarrow_{N\rightarrow\infty}0 \ \  \text{in  $L^{p}(0,T; H^{-s}(\mathbb{T}^{3}))$}
\end{align*}
for any $1<p<\infty$. Furthermore, since  $\widetilde{\mathbf{u}}^{N},\mathbf{u}^{N}$ are uniformly bounded in $L^{2}(0,T;  H^{1}(\mathbb{T}^{3}))$, we get
\begin{align}\label{Strong convergence 1}
	& \mathbf{u}^{N}\rightarrow_{N\rightarrow\infty}\mathbf{u}\ \  \text{in  $L^{2}(0,T; H^{q}(\mathbb{T}^{3}))$},\ 0\leq q<1,
\end{align}
where we have used $\|\bu^{N}-u\|_{L^{2}(0,T;H^{q}(\mathbb{T}^{3}))}\leq \|\bu^{N}-u\|_{L^{2}(0,T;H^{-s}(\mathbb{T}^{3}))}^{\theta}\|\bu^{N}-u\|_{L^{2}(0,T;H^{1}(\mathbb{T}^{3}))}^{1-\theta}(\theta=\frac{1-q}{s+1})$. Finally, thanks to Lemma \ref{lem1}, the regularity $\widetilde{\mathbf{u}}^{N}\in L^{\infty}(0,T;  L^{2}(\mathbb{T}^{3}))$, $\widetilde{\mathbf{u}}^{N}$ converges weakly in $H^{1}(0,T; H^{-s}(\mathbb{T}^{3}))\hookrightarrow BUC(0,T; H^{-s}(\mathbb{T}^{3}))$ for $s>\frac{3}{2}$ and $\widetilde{\mathbf{u}}^{N}|_{t=0}=\mathbf{u}_{0}$, we can obtain $\mathbf{u}\in BC_{\omega}(0,T;  L^{2}(\mathbb{T}^{3}))$ and \eqref{appinitial data-1}.

With the help of \eqref{N-energy}, we obtain $\partial_{t}\widetilde{\rho}^{N}$ is uniformly bounded in $L^{2}(0,T;H^{-1}(\mathbb{T}^{3}))$ and $\widetilde{\rho}^{N}$ is uniformly bounded in $L^{\infty}(0,T;  H^{s}(\mathbb{T}^{3}))$. Then, due to the Aubin-Lions compactness lemma, we can find up to a subsequence that
\begin{align}\label{im13}
	& \widetilde{\rho}^{N}\rightarrow _{N\rightarrow\infty}  \widetilde{\rho}, \ \  \text{in  $L^{2}(Q_{T})$ }
\end{align}
and  $\widetilde{\rho}\in L^{\infty}(Q_{T})$ for all $0<T<\infty$.  Moreover, by \eqref{im12rho}, we have
\begin{align*}
	& \widetilde{\rho}^{N}-\rho^{N}\rightarrow _{N\rightarrow\infty} 0  , \ \  \text{in  $L^{2}(0,T;H^{-1}(\mathbb{T}^{3}))$.}
\end{align*}
Since $\widetilde{\rho}^{N}, \rho^{N}$ are both uniformly bounded in $L^{\infty}(0,T;  H^{s}(\mathbb{T}^{3}))$ and \eqref{im13}, we have
\begin{align*}
	& \rho^{N}\rightarrow _{N\rightarrow\infty}  \widetilde{\rho}, \ \  \text{in  $L^{2}(Q_{T})$ }
\end{align*}
which and  $\rho^{N}=\frac{\varepsilon}{2}\phi^{N}+1+\frac{\varepsilon}{2}$ imply
\begin{align*}
	& \phi^{N}\rightarrow _{N\rightarrow\infty}  \phi\triangleq \frac{2}{\varepsilon}\widetilde{\rho}-1-\frac{2}{\varepsilon}, \ \  \text{in  $L^{2}(Q_{T})$,}
\end{align*}
that is,  $\widetilde{\rho}=\rho(\phi)$.

Moreover, Since $\widetilde{\rho}^{N}\in H^{1}(0,T;  H^{-1}(\mathbb{T}^{3}))\hookrightarrow BUC(0,T; H^{-1}(\mathbb{T}^{3}))$ and
$\widetilde{\rho}^{N}$ is bounded in $L^{\infty}(0,T;  H^{s}(\mathbb{T}^{3}))$, we get 
$\rho \in BC_{\omega}(0,T; H^{s}(\mathbb{T}^{3}))$(cf. Lemma \ref{lem1}). Hence, $\phi \in BC_{\omega}(0,T;  H^{s}(\mathbb{T}^{3}))$ and \eqref{appinitial data-2} holds.

To prove Theorem \ref{main theorem2}, the remaining part is  to verify \eqref{weak1}-\eqref{ConEnL2} in what follows. By passing to the limit in \eqref{weak12-1} and \eqref{weak12-2} respectively we can immediately deduce \eqref{weak2} and \eqref{weak3}.

It follows from $\mu_{p}^{0,N}+\overline{\mu^{N}}-\Phi'(\phi^{N})+\frac{\kappa\phi^{N}}{2}+\frac{\kappa\phi_h^{N}}{2}-\alpha p_{0,N}$  converges weakly to $\mu_{p}^{0}+\overline{\mu}-F'(\phi)-\alpha p_{0}$ in $L^{2}(Q_{T})$ that
\begin{align*}
	(\Lambda^{2s}\phi^{N},\phi^{N})_{X_{T}', X_{T}}&=(\mu_{p}^{0,N}+\overline{\mu^{N}}-\Phi'(\phi^{N})+\frac{\kappa\phi^{N}}{2}
	+\frac{\kappa\phi_h^{N}}{2}-\alpha p_{0,N}, \phi^{N})_{Q_{T}}\\&\rightarrow _{N\rightarrow\infty}
	(\mu_{p}^{0}+\overline{\mu}-F'(\phi)-\alpha p_{0}, \phi)_{Q_{T}},
\end{align*}
where $X_{T}=L^{2}(0,T;H^{s}(\mathbb{T}^{3}))$. By Lemma \ref{lem5}  one has  $\Lambda^{2s}\phi=\mu_{p}^{0}+\overline{\mu}-F'(\phi)-\alpha p_{0}$ and then
\begin{align}\label{Strong convergence}
	&\int_{0}^T\int_{\mathbb{T}^{3}}|\Lambda^{s} \phi^{N}|^{2} \dxdt \rightarrow _{N\rightarrow\infty}\int_{0}^T\int_{\mathbb{T}^{3}}|\Lambda^{s} \phi|^{2} \dxdt
\end{align}
hence $\phi^{N}$ converges to $\phi$ strongly in $L^{2}(0,T;  H^{s}(\mathbb{T}^{3}))$. Therefore \eqref{weak4} holds by passing to the limit $N\rightarrow\infty$ in \eqref{weak13}.

Thanks to $\nabla (\rho_{h}^{N})^{-1}\cdot S(\phi_{h}^{N},\mathbb{ D}\mathbf{u}^{N}) \rightharpoonup _{N\rightarrow\infty} \nabla \rho^{-1}\cdot S(\phi,\mathbb{ D}\mathbf{u})$ in $L^{1}(0,T;  L^{1}(\mathbb{T}^{3}))$, we can get  \eqref{weak1} by passing to limit $N\rightarrow\infty$ in \eqref{weak111}.

Finally, up to a subsequence, there holds
\begin{align*}
	& E(\mathbf{u}^{N}(x,t),\phi^{N}(x,t))\rightarrow _{N\rightarrow\infty}E(\mathbf{u}(x,t),\phi(x,t)), \quad\text{for all $0<t< \infty$}.
\end{align*} which comes from the facts that $\mathbf{u}^{N}$ converges strongly to $\mathbf{u}$ in $L^{2}(0,T;  L^{2}(\mathbb{T}^{3}))$,  $\phi^{N}$ converges to $\phi$ almost everywhere and $\phi^{N}$ converges strongly to $\phi$ in $L^{2}(0,T;  H^{s}(\mathbb{T}^{3}))$. Then it easy to get $\int_{\mathbb{T}^{3}}\phi\, \mathrm{d} x=\int_{\mathbb{T}^{3}}\phi_0\, \mathrm{d} x$ for a.e.~$t \in (0,T]$. 

Noting that $\phi^{N}$ converges  to $\phi$ almost everywhere, we obtain
\begin{align*}
	&\nonumber \liminf_{N\to\infty}\int_{0}^T D_{N}(t)\varphi(t) \,\dt
	\geq \int_{0}^T D(t)\varphi(t)  \,\dt \ \text{for $\varphi\in W^{1,1}(0,T)$ and $\varphi\geq0$ }
\end{align*}
with
\begin{align*}
	D(t)=\int_{\mathbb{T}^{3}}S(\phi,\mathbb{ D}\mathbf{u}):\mathbb{ D}\mathbf{u}\,\mathrm dx
	+\int_{\mathbb{T}^{3}}|\nabla \mu_{p}^{0}|^{2}\,\mathrm dx+\delta\int_{\mathbb{T}^{3}}|{p}^{0}|^{2}\,\mathrm dx.
\end{align*}
Hence, we can pass to the limit in \eqref{im11-1} to get  that
\begin{align}\label{im13-1}
	&\int_0^{T}E(t)\varphi'(t) \,\dt+E(\phi_{0},\mathbf{u}_{0})\geq \int_0^{T}D_{N}(t)\varphi(t) \,\dt
	\ \text{for $\varphi\in W^{1,1}(0,T)$ and $\varphi\geq0$ }
\end{align}
which and  lemma \ref{lem4} imply  \eqref{ConEnL2} holds.

Consequently we complete the proof of Theorem \ref{main theorem2}.

\section{Existence of Weak Solutions: Proof of Theorem \ref{thm:main}}
\label{sec:mainthm}
In this section, $R$ in Lemma \ref{lem2} will be chosen such that $E(\mathbf{u}_{0}, \phi_{0})<R$ as in Theorem \ref{main theorem2}. We also take $(\mathbf{u}_{\delta},p_{0,\delta}, \phi_{\delta},\mu_{p,\delta}^{0})\equiv (\mathbf{u},p_{0}, \phi,\mu_{p}^{0})$ as the solution of \eqref{model4-1}--\eqref{model4-5} in Theorem \ref{main theorem2}.
%

To proceed, we apply the Helmholtz decomposition to $\mathbf{u}_{\delta}$, i.e., we decompose
\begin{align}\mathbf{u}_{\delta}=\mathbb{P}\mathbf{u}_{\delta}+\nabla\big( \mathbb{G}(\mathbf{u}_{\delta})\big),\label{Helmholtz decomposition}\end{align}
where $\mathbb{G}$ is defined by
\begin{align*}
	&\Delta \mathbb{G}(\mathbf{u}_{\delta})=\mathrm{div}\mathbf{u}_{\delta},\quad \text{in}\ \mathbb{T}^{3}
\end{align*}
and $\int_{\mathbb{T}^{3}} \mathbb{G}(\mathbf{u}_{\delta})  \d x=0$. Then we introduce a new pressure denoted by $p_{1,\delta}$
\begin{align}
	p_{1,\delta}=\zeta p_{0,\delta}+\partial_{t}\mathbb{G}(\mathbf{u}_{\delta}).\label{new pressure}
\end{align}
In view of \eqref{weak1}, one then has
\begin{align}\label{newweak1}
	&\nonumber(\mathbb{P}\mathbf{u}_{\delta},\partial_{t}\boldsymbol{\varphi})_{Q_{T}}+(\mathbf{u}_{\delta}\cdot\nabla\mathbf{u}_{\delta},\boldsymbol{\varphi})_{Q_{T}}-(\rho_{\delta}^{-1}S(\phi_{\delta},\mathbb{ D}\mathbf{u}_{\delta}),\nabla\boldsymbol{\varphi})_{Q_{T}}-(S(\phi_{\delta},\mathbb{ D}\mathbf{u}_{\delta}),\nabla\rho_{\delta}^{-1}\otimes\boldsymbol{\varphi})_{Q_{T}}
	\\&\qquad
	+(p_{1,\delta},\mathrm{div}\boldsymbol{\varphi})_{Q_{T}}-(\rho_{\delta}^{-1}\phi_{\delta} \nabla \mu_{p,\delta}^{0},\boldsymbol{\varphi})_{Q_{T}}
	-(\frac{\varepsilon\delta}{4\alpha}  p_{0,\delta} \mathbf{u}_{\delta},\boldsymbol{\varphi}\rho_{\delta}^{-1})_{Q_{T}}=0.
\end{align}

In the rest of this section, we show a series of uniform estimates with respect to  $\delta>0$.

\begin{lemma}\label{lem5.1}
	For $1<r<\frac{3}{2}$  there exists a constant $C$ independent of $\delta > 0$ such that
	\begin{align*}
		\|p_{1,\delta}\|_{L^{2}(0,T;L^{r}(\mathbb{T}^{3}))}\leq C.
	\end{align*}
\end{lemma}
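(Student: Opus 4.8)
The plan is to bound $p_{1,\delta}$ by duality. Since $1<r<\tfrac32$, the conjugate exponent satisfies $r'>3$, so $W^{1,r'}(\bbt^3)\hookrightarrow L^{\infty}(\bbt^3)\cap H^{1}(\bbt^3)$ --- an embedding unavailable for $r\ge\tfrac32$, and this is precisely what the hypothesis provides. As $p_{1,\delta}=\zeta p_{0,\delta}+\partial_{t}\mathbb{G}(\mathbf{u}_{\delta})$ has spatial mean zero for a.e.\ $t$, one has $(p_{1,\delta},c)_{Q_{T}}=0$ for every spatially constant $c=c(t)$, so by $(L^{2}(0,T;L^{r'}(\bbt^3)))^{*}=L^{2}(0,T;L^{r}(\bbt^3))$ it suffices to establish
\begin{align*}
	(p_{1,\delta},G)_{Q_{T}}\le C\,\|G\|_{L^{2}(0,T;L^{r'}(\bbt^3))}\qquad\text{for all }G\in L^{2}(0,T;L^{r'}(\bbt^3))\ \text{with}\ \textstyle\int_{\bbt^3}G(\cdot,t)\dx=0\ \text{a.e.},
\end{align*}
with $C$ independent of $\delta$.

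Given such a $G$, I would solve, for a.e.\ $t$, the periodic Poisson problem $\Delta\pi(\cdot,t)=G(\cdot,t)$ in $\bbt^3$ with $\int_{\bbt^3}\pi(\cdot,t)\dx=0$; elliptic regularity yields $\nabla\pi\in L^{2}(0,T;H^{1}(\bbt^3)\cap L^{\infty}(\bbt^3)\cap W^{1,r'}(\bbt^3))$ with norm $\lesssim\|G\|_{L^{2}(0,T;L^{r'})}$. A density argument --- approximating $\pi$ by functions in $C_{0}^{\infty}(0,T;W^{2,r'}\cap H^{2})$, and using that $(\mathbb{P}\mathbf{u}_{\delta},\partial_{t}\boldsymbol{\varphi})_{Q_{T}}=0$ whenever $\boldsymbol{\varphi}$ is a spatial gradient, since $\mathbb{P}\mathbf{u}_{\delta}$ is solenoidal and $\bbt^3$ has no boundary --- then permits inserting $\boldsymbol{\varphi}=\nabla\pi$ into \eqref{newweak1}. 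This is exactly the role of the new pressure $p_{1}$: it absorbs $\partial_{t}\mathbb{G}(\mathbf{u}_{\delta})$, so the term $(\mathbf{u}_{\delta},\partial_{t}\boldsymbol{\varphi})_{Q_{T}}$ --- which would otherwise require time-regularity of $\mathbf{u}_{\delta}$ that we do not have --- disappears for gradient test functions. Since $\mathrm{div}\,(\nabla\pi)=\Delta\pi=G$, the identity \eqref{newweak1} collapses to
\begin{align*}
	(p_{1,\delta},G)_{Q_{T}}=-(\mathbf{u}_{\delta}\cdot\nabla\mathbf{u}_{\delta},\nabla\pi)_{Q_{T}}+(\rho_{\delta}^{-1}S_{\delta},\nabla^{2}\pi)_{Q_{T}}+(S_{\delta},\nabla\rho_{\delta}^{-1}\otimes\nabla\pi)_{Q_{T}}+(\rho_{\delta}^{-1}\phi_{\delta}\nabla\mu_{p,\delta}^{0},\nabla\pi)_{Q_{T}}+\tfrac{\varepsilon\delta}{4\alpha}(\rho_{\delta}^{-1}p_{0,\delta}\mathbf{u}_{\delta},\nabla\pi)_{Q_{T}},
\end{align*}
where $S_{\delta}:=S(\phi_{\delta},\mathbb{D}\mathbf{u}_{\delta})$.

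It remains to bound each term on the right by $C\|G\|_{L^{2}(0,T;L^{r'})}$ with $C$ independent of $\delta$, invoking only the $\delta$-uniform bounds furnished by the energy inequality \eqref{ConEnL2} and Theorem \ref{main theorem2}: $\mathbf{u}_{\delta}$ bounded in $L^{\infty}(0,T;L^{2})\cap L^{2}(0,T;H^{1})$; $\|\phi_{\delta}\|_{L^{\infty}(Q_{T})}\le1+\theta$ (with $\theta$ from Lemma~\ref{lem2}) and $\phi_{\delta}$ bounded in $L^{\infty}(0,T;H^{s})$, whence $\rho_{\delta}^{-1}$ is bounded above and below and $\nabla\rho_{\delta}^{-1}$ is bounded in $L^{\infty}(0,T;L^{2})$; $S_{\delta}$ and $\nabla\mu_{p,\delta}^{0}$ bounded in $L^{2}(Q_{T})$; and $\delta\,p_{0,\delta}$ bounded in $L^{2}(Q_{T})$ (of order $\delta^{1/2}$). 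The two viscous terms and the last term are routine Hölder estimates of type $L^{\infty}\times L^{2}\times L^{2}$ combined with $\nabla\pi\in L^{2}_{t}L^{\infty}_{x}$ and $\nabla^{2}\pi\in L^{2}_{t}L^{r'}_{x}\hookrightarrow L^{2}_{t}L^{2}_{x}$; the capillary term is controlled by Cauchy--Schwarz in $L^{2}(Q_{T})$, using $\nabla\mu_{p,\delta}^{0}\in L^{2}(Q_{T})$ and $\nabla\pi\in L^{2}(0,T;L^{2}(\bbt^3))$; and the convective term uses $\mathbf{u}_{\delta}\cdot\nabla\mathbf{u}_{\delta}\in L^{2}(0,T;L^{1}(\bbt^3))$ (from $\mathbf{u}_{\delta}\in L^{\infty}_{t}L^{2}_{x}\cap L^{2}_{t}H^{1}_{x}$) paired with $\nabla\pi\in L^{2}(0,T;L^{\infty}(\bbt^3))$. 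Summing these and taking the supremum over admissible $G$ gives the assertion.

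The step I expect to require the most care is securing genuinely $L^{2}$ --- not merely $L^{1}$ --- integrability in time on the right-hand side: because the auxiliary potential $\pi$ is only $L^{2}$ in time, every bilinear pairing must be arranged so that the factor carrying $\|G(\cdot,t)\|_{L^{r'}}$ enters with an $L^{2}_{t}$ weight while the remaining factors jointly lie in $L^{2}_{t}$; it is exactly this constraint, together with the need for $\nabla\pi\in L^{\infty}_{x}$ in the convective and capillary terms, that forces $r'>3$, i.e.\ $r<\tfrac32$. A secondary technical point is the density argument licensing the low-time-regularity test function $\boldsymbol{\varphi}=\nabla\pi$ in \eqref{newweak1}, which rests on the vanishing of $(\mathbb{P}\mathbf{u}_{\delta},\partial_{t}\boldsymbol{\varphi})_{Q_{T}}$ for solenoidal $\mathbb{P}\mathbf{u}_{\delta}$.
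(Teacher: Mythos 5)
Your proposal is correct and takes essentially the same route as the paper: both insert gradient test functions into \eqref{newweak1}, exploit that $(\mathbb{P}\mathbf{u}_{\delta},\partial_{t}\boldsymbol{\varphi})_{Q_{T}}$ vanishes when $\boldsymbol{\varphi}$ is a spatial gradient, and then run term-by-term H\"older estimates using $W^{1,r'}(\bbt^3)\hookrightarrow L^{\infty}(\bbt^3)$ for $r'>3$. The only cosmetic difference is that the paper first fixes $\boldsymbol{\varphi}=\eta(t)\nabla\psi$ with $\psi$ time-independent, obtains a pointwise-in-time identity, and invokes Lemma~\ref{lem3} (very weak Laplace solution) to convert the $(W^{2,r'}_{(0)})'$ bound on the right-hand side into an $L^{r}$ bound on $p_{1,\delta}$, whereas you carry out the same duality by hand via $\Delta\pi=G$; this sidesteps the separate lemma at the cost of the density step you correctly flag.
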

\begin{proof}
	By choosing $\boldsymbol{\varphi}=\eta(t)\nabla\psi$ with $\psi\in W_{(0)}^{2,r'}(\mathbb{T}^{3})(\frac{1}{r}+\frac{1}{r'}=1)$  and $\eta(t)\in C_{0}^{\infty}(0,T)$  in \eqref{newweak1}, we have
	\begin{align*}
		&\int_0^{T}\int_{\mathbb{T}^{3}} p_{1,\delta}\Delta\psi \eta(t) \dx\dt
		\\&=\int_0^{T}\int_{\mathbb{T}^{3}}\eta(t)(\mathbf{u}_{\delta}\cdot\nabla
		\mathbf{u}_{\delta})\cdot\nabla\psi  \dx\dt+\int_0^{T}\int_{\mathbb{T}^{3}}\eta(t)\rho_{\delta}^{-1}\big(S(\phi_{\delta},\mathbb{ D}\mathbf{u}_{\delta}):\nabla^{2}\psi\big)  \dx\dt
		\\&\quad+\int_0^{T}\int_{\mathbb{T}^{3}}\eta(t)\big(\rho_{\delta}^{-1}\phi_{\delta} \nabla \mu_{p,\delta}^{0}+\nabla\rho_{\delta}^{-1}\cdot\big(S(\phi_{\delta},\mathbb{ D}\mathbf{u}_{\delta})\big)+\frac{\varepsilon\delta}{4\alpha\rho_{\delta}}  p_{0,\delta} \mathbf{u}_{\delta}\big)\cdot\nabla\psi  \dx\dt
	\end{align*}
	which implies for almost every $0<t<T$
	\begin{align*}
		&\int_{\mathbb{T}^{3}} p_{1,\delta}\Delta\psi  \dx
		\\&=\int_{\mathbb{T}^{3}}(\mathbf{u}_{\delta}\cdot\nabla
		\mathbf{u}_{\delta})\cdot\nabla\psi  \dx+\int_{\mathbb{T}^{3}}\rho_{\delta}^{-1}S(\phi_{\delta},\mathbb{ D}\mathbf{u}_{\delta}):\nabla^{2}\psi  \dx
		\\&\quad+\int_{\mathbb{T}^{3}}\big(\rho_{\delta}^{-1}\phi_{\delta} \nabla \mu_{p,\delta}^{0}+\nabla\rho_{\delta}^{-1}\cdot\big(S(\phi_{\delta},\mathbb{ D}\mathbf{u}_{\delta})\big)+\frac{\varepsilon\delta}{4\alpha \rho_{\delta}}  p_{0,\delta} \mathbf{u}_{\delta}\big)\cdot\nabla\psi  \dx.
	\end{align*}
	
	Noting that
	\begin{align*}
		&\int_{\mathbb{T}^{3}}(\mathbf{u}_{\delta}\cdot\nabla
		\mathbf{u}_{\delta})\cdot\nabla\psi  \dx+\int_{\mathbb{T}^{3}}\rho_{\delta}^{-1}S(\phi_{\delta},\mathbb{ D}\mathbf{u}_{\delta}):\nabla^{2}\psi  \dx
		\\&\quad+\int_{\mathbb{T}^{3}}\big(\rho_{\delta}^{-1}\phi_{\delta} \nabla \mu_{p,\delta}^{0}+\nabla\rho_{\delta}^{-1}\cdot\big(S(\phi_{\delta},\mathbb{ D}\mathbf{u}_{\delta})\big)+\frac{\varepsilon\delta}{4\alpha \rho_{\delta}}  p_{0,\delta} \mathbf{u}_{\delta}\big)\cdot\nabla\psi  \dx\\ &\leq C\bigg(
		\|\mathbf{u}_{\delta}\cdot\nabla
		\mathbf{u}_{\delta}\|_{L^{1}(\mathbb{T}^{3})}+\|\rho_{\delta}^{-1}S(\phi_{\delta},\mathbb{ D}\mathbf{u}_{\delta})\|_{L^{r}(\mathbb{T}^{3})}
		\\&\quad+\|\rho_{\delta}^{-1}\phi_{\delta} \nabla \mu_{p,\delta}^{0}+\nabla\rho_{\delta}^{-1}\cdot S(\phi_{\delta},\mathbb{ D}\mathbf{u}_{\delta})+\frac{\varepsilon\delta}{4\alpha\rho_{\delta}}  p_{0,\delta} \mathbf{u}_{\delta}
		\|_{L^{1}(\mathbb{T}^{3})}\bigg)\|\psi\|_{W^{2,r'}(\mathbb{T}^{3})}
		\\&\leq C(\theta,R)\bigg(\|\mathbf{u}_{\delta}\|_{H^{1}(\mathbb{T}^{3})}+\|\mu_{p,\delta}^{0}\|_{H^{1}(\mathbb{T}^{3})}+\| \delta p_{0,\delta}\|_{L^{2}(\mathbb{T}^{3})})\bigg)\|\psi\|_{W^{2,r'}(\mathbb{T}^{3})},
	\end{align*}
	where we have used $W^{1,r'}(\mathbb{T}^{3})\hookrightarrow L^{\infty}(\mathbb{T}^{3})(r'>3)$.
	
	Accordingly,  $p_{1,\delta}$ is the very weak  solution of the Laplace with right-hand side $F_{\delta}(t)$(cf. Lemma \ref{lem3}) and
	\begin{align*}
		\|F_{\delta}(t)\|_{(W_{(0)}^{2,r'}(\mathbb{T}^{3}))'}\leq C(\theta,R)(\|\mathbf{u}_{\delta}\|_{H^{1}(\mathbb{T}^{3})}+\|\mu_{p,\delta}^{0}\|_{H^{1}(\mathbb{T}^{3})}+\| \delta p_{0,\delta}\|_{L^{2}(\mathbb{T}^{3})}).
	\end{align*}
	According to Lemma \ref{lem3}, one derives
	\begin{align*}
		&\|p_{1,\delta}\|_{L^{2}(0,T;L^{r}(\mathbb{T}^{3}))}\leq C(\theta,R)\|F_{\delta}(t)\|_{L^{2}(0,T;(W_{(0)}^{2,r'}(\mathbb{T}^{3}))')}\leq C(\theta,R).
	\end{align*}
	Thus, we complete the proof.
\end{proof}

			
			For the following argument, we show the following result.
			\begin{lemma}\label{diverge free}
				For $1<r<\frac{3}{2}$, the following holds
				\begin{align*}
					\|\partial_{t}\mathbb{P}(\mathbf{u}_{\delta})\|_{L^{2}(0,T;W^{-1,r}(\mathbb{T}^{3}))}\leq C,
				\end{align*}
				where $C$ is independent of $\delta$.
			\end{lemma}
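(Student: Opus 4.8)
The plan is to test the weak momentum identity \eqref{newweak1} only against divergence-free test fields, so that the pressure term $(p_{1,\delta},\mathrm{div}\,\boldsymbol{\varphi})_{Q_{T}}$ is annihilated and $\partial_{t}\mathbb{P}\mathbf{u}_{\delta}$ gets identified with a functional built only from the convective, viscous and capillary contributions (so Lemma \ref{lem5.1} is not used here). First I would record two facts that single out the range $1<r<\tfrac32$: the Helmholtz--Leray projector $\mathbb{P}$ (see \eqref{Helmholtz decomposition}) is self-adjoint on $L^{2}(\mathbb{T}^{3})$ and bounded on $W^{1,q}(\mathbb{T}^{3})$ for every $1<q<\infty$; and $r'>3$ gives $W^{1,r'}(\mathbb{T}^{3})\hookrightarrow L^{\infty}(\mathbb{T}^{3})$, whence $L^{1}(\mathbb{T}^{3})\hookrightarrow W^{-1,r}(\mathbb{T}^{3})$ and $L^{2}(\mathbb{T}^{3})\hookrightarrow W^{-1,r}(\mathbb{T}^{3})$ continuously. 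Then, taking in \eqref{newweak1} the test fields $\boldsymbol{\varphi}(x,t)=\eta(t)\,\mathbb{P}\boldsymbol{\chi}(x)$ with $\eta\in C_{0}^{\infty}(0,T)$ and $\boldsymbol{\chi}\in W^{1,r'}(\mathbb{T}^{3})$, and using $\mathrm{div}\,\mathbb{P}\boldsymbol{\chi}=0$ together with $(\mathbb{P}\mathbf{u}_{\delta},\mathbb{P}\boldsymbol{\chi})_{\mathbb{T}^{3}}=(\mathbb{P}\mathbf{u}_{\delta},\boldsymbol{\chi})_{\mathbb{T}^{3}}$, I expect to identify $\partial_{t}\mathbb{P}\mathbf{u}_{\delta}$ (a priori a distribution) with
\begin{align*}
\langle\partial_{t}\mathbb{P}\mathbf{u}_{\delta},\boldsymbol{\chi}\rangle
&=(\mathbf{u}_{\delta}\cdot\nabla\mathbf{u}_{\delta},\mathbb{P}\boldsymbol{\chi})_{\mathbb{T}^{3}}
-(\rho_{\delta}^{-1}S(\phi_{\delta},\mathbb{D}\mathbf{u}_{\delta}),\nabla\mathbb{P}\boldsymbol{\chi})_{\mathbb{T}^{3}}
-(S(\phi_{\delta},\mathbb{D}\mathbf{u}_{\delta}),\nabla\rho_{\delta}^{-1}\otimes\mathbb{P}\boldsymbol{\chi})_{\mathbb{T}^{3}}\\
&\quad-(\rho_{\delta}^{-1}\phi_{\delta}\nabla\mu_{p,\delta}^{0},\mathbb{P}\boldsymbol{\chi})_{\mathbb{T}^{3}}
-\frac{\varepsilon\delta}{4\alpha}\,(\rho_{\delta}^{-1}p_{0,\delta}\mathbf{u}_{\delta},\mathbb{P}\boldsymbol{\chi})_{\mathbb{T}^{3}}
\end{align*}
for a.e.\ $t\in(0,T)$ and all $\boldsymbol{\chi}\in W^{1,r'}(\mathbb{T}^{3})$ (the $p_{1,\delta}$-term being absent by the choice of $\boldsymbol{\chi}$).

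Next I would estimate each of the five terms, for a.e.\ $t$, by $\|\boldsymbol{\chi}\|_{W^{1,r'}(\mathbb{T}^{3})}$ times a function in $L^{2}(0,T)$ with $\delta$-independent norm. By Lemma \ref{lem2}, $\phi_{\delta}$ takes values in a fixed bounded interval, so $\rho_{\delta}$ is bounded above and below away from zero, $|\nabla\rho_{\delta}^{-1}|\lesssim|\nabla\phi_{\delta}|$, and by Assumption \ref{ass:main} $|S(\phi_{\delta},\mathbb{D}\mathbf{u}_{\delta})|\lesssim|\nabla\mathbf{u}_{\delta}|$; also $H^{s}(\mathbb{T}^{3})\hookrightarrow L^{\infty}(\mathbb{T}^{3})$ for $s>\tfrac32$. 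The energy inequality \eqref{ConEnL2} furnishes the $\delta$-uniform bounds for
\begin{align*}
\|\mathbf{u}_{\delta}\|_{L^{\infty}(0,T;L^{2})}+\|\nabla\mathbf{u}_{\delta}\|_{L^{2}(Q_{T})}+\|\phi_{\delta}\|_{L^{\infty}(0,T;H^{s})}+\|\nabla\mu_{p,\delta}^{0}\|_{L^{2}(Q_{T})}+\delta^{1/2}\|p_{0,\delta}\|_{L^{2}(Q_{T})}.
\end{align*}
Using $\|\mathbb{P}\boldsymbol{\chi}\|_{L^{\infty}}+\|\nabla\mathbb{P}\boldsymbol{\chi}\|_{L^{2}}\lesssim\|\boldsymbol{\chi}\|_{W^{1,r'}}$, I would bound the convective term by $\|\mathbf{u}_{\delta}\|_{L^{2}}\|\nabla\mathbf{u}_{\delta}\|_{L^{2}}$, the first viscous term by $\|\nabla\mathbf{u}_{\delta}\|_{L^{2}}$, the viscous cross term by $\|\nabla\mathbf{u}_{\delta}\|_{L^{2}}\|\phi_{\delta}\|_{H^{s}}$, the capillary term by $\|\nabla\mu_{p,\delta}^{0}\|_{L^{2}}$, and the $\delta$-term by $\delta^{1/2}\big(\delta^{1/2}\|p_{0,\delta}\|_{L^{2}}\big)\|\mathbf{u}_{\delta}\|_{L^{2}}$. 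In every case this is a uniformly bounded $L^{\infty}_{t}$-factor ($\|\mathbf{u}_{\delta}\|_{L^{2}}$, $\|\phi_{\delta}\|_{H^{s}}$, or simply $\delta^{1/2}$) times one of $\|\nabla\mathbf{u}_{\delta}(t)\|_{L^{2}}$, $\|\nabla\mu_{p,\delta}^{0}(t)\|_{L^{2}}$, $\delta^{1/2}\|p_{0,\delta}(t)\|_{L^{2}}$, each of which lies in $L^{2}(0,T)$ with $\delta$-uniform norm by \eqref{ConEnL2}; summing gives $t\mapsto\|\partial_{t}\mathbb{P}\mathbf{u}_{\delta}(t)\|_{W^{-1,r}(\mathbb{T}^{3})}\in L^{2}(0,T)$ with a bound depending only on $R,\theta$ and the data.

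The steps that need care are, in my view, bookkeeping ones rather than a genuine obstacle. The cancellation of the pressure is precisely why the modified pressure $p_{1,\delta}$ was introduced in \eqref{new pressure}: since $\partial_{t}\mathbb{P}\mathbf{u}_{\delta}=\partial_{t}\mathbf{u}_{\delta}-\nabla\partial_{t}\mathbb{G}(\mathbf{u}_{\delta})$, the gradient part is absorbed into $\nabla p_{1,\delta}$ and is invisible to divergence-free test fields; one should verify this carefully when passing from \eqref{newweak1} to the displayed identity. The $\delta$-dependent forcing $\tfrac{\varepsilon\delta}{4\alpha}p_{0,\delta}\mathbf{u}_{\delta}$ is the place where uniformity in $\delta$ could be lost, but the splitting $\delta=\delta^{1/2}\cdot\delta^{1/2}$ together with the dissipation bound $\delta\|p_{0,\delta}\|_{L^{2}(Q_{T})}^{2}\le E(\mathbf{u}_{0},\phi_{0})$ from \eqref{ConEnL2} keeps it controlled (indeed $o(1)$ as $\delta\to0$). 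Finally, the restriction $r<\tfrac32$ enters only through the embedding $W^{1,r'}(\mathbb{T}^{3})\hookrightarrow L^{\infty}(\mathbb{T}^{3})$, which is what lets the terms that are merely $L^{1}$ in space — the convective term, the viscous cross term and the $\delta$-forcing — be placed in $W^{-1,r}(\mathbb{T}^{3})$.
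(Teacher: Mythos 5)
Your proof is correct, but it takes a genuinely different route from the paper. The paper tests \eqref{newweak1} against \emph{all} test fields $\boldsymbol{\varphi}\in C_{0}^{\infty}(0,T;W^{1,r'}(\mathbb{T}^{3}))$, keeps the pressure contribution $(p_{1,\delta},\mathrm{div}\,\boldsymbol{\varphi})_{Q_{T}}$, and bounds it by $\|p_{1,\delta}\|_{L^{2}(0,T;L^{r})}\|\nabla\boldsymbol{\varphi}\|_{L^{2}(0,T;L^{r'})}$ using the uniform pressure estimate of Lemma~\ref{lem5.1}; the remaining terms are estimated exactly as you do. You instead restrict to divergence-free test fields $\eta(t)\mathbb{P}\boldsymbol{\chi}$, so the pressure is annihilated and Lemma~\ref{lem5.1} is not needed for this step; this works because $\partial_{t}\mathbb{P}\mathbf{u}_{\delta}$ is itself solenoidal, the projection $\mathbb{P}$ is self-adjoint on $L^{2}$ and bounded on $W^{1,r'}$, so testing against $\mathbb{P}\boldsymbol{\chi}$ recovers the full $W^{-1,r}$ norm. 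Your argument is slightly more self-contained and makes the role of the Helmholtz decomposition more transparent, while the paper's is more direct bookkeeping once Lemma~\ref{lem5.1} is in hand. Note though that Lemma~\ref{lem5.1} is still indispensable elsewhere in the paper (e.g.\ in Lemma~\ref{lem5.3} and in the passage to the limit of the momentum equation), so your alternative does not shorten the overall argument; it merely decouples the time-derivative estimate from the pressure estimate, which is conceptually cleaner.
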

			\begin{proof}
				It follows from \eqref{ConEnL2}, \eqref{newweak1}, the H\"{o}lder's inequality and Sobolev embedding that there holds
				\begin{align}\label{d5.1}
					\nonumber(\mathbb{P}(\mathbf{u}_{\delta}),\partial_{t}\boldsymbol{\varphi})_{Q_{T}} &=(\mathbf{u}_{\delta}\cdot\nabla
					\mathbf{u}_{\delta},\boldsymbol{\varphi})_{Q_{T}}+( S(\phi_{\delta},\mathbb{ D}\mathbf{u}_{\delta}),\nabla\frac{1}{\rho_{\delta}}\otimes\boldsymbol{\varphi})_{Q_{T}}+( \frac{1}{\rho_{\delta}}S(\phi_{\delta},\mathbb{ D}\mathbf{u}_{\delta}),\nabla\boldsymbol{\varphi})_{Q_{T}}
					\nonumber \\&\quad+(\nabla p_{1,\delta},\boldsymbol{\varphi})_{Q_{T}}
					+(\frac{1}{\rho_{\delta}}\phi_{\delta} \nabla \mu_{p,\delta}^{0},\boldsymbol{\varphi})_{Q_{T}}
					+(\frac{\varepsilon\delta}{4\alpha\rho_{\delta}}p_{0,\delta} \mathbf{u}_{\delta},\boldsymbol{\varphi})_{Q_{T}}
					\nonumber \\  \nonumber&\lesssim\|\mathbf{u}_{\delta}\|_{L^{\infty}(0,T;L^{2}(\mathbb{T}^{3}))}\|\nabla\mathbf{u}_{\delta}\|_{L^{2}(0,T;L^{2}(\mathbb{T}^{3}))}\|\boldsymbol{\varphi}\|_{L^{2}(0,T;L^{\infty}(\mathbb{T}^{3}))}
					\\& \nonumber\quad+\|\phi_{\delta}\|_{L^{\infty}(0,T;L^{\infty}(\mathbb{T}^{3}))}\|\mathbb{ D}\mathbf{u}_{\delta}\|_{L^{2}(0,T;L^{2}(\mathbb{T}^{3}))}\|\nabla\rho_{\delta}^{-1}\|_{L^{\infty}(0,T;L^{3}(\mathbb{T}^{3}))}\|\boldsymbol{\varphi}\|_{L^{2}(0,T;L^{6}(\mathbb{T}^{3}))}
					\\& \nonumber\quad+\|\phi_{\delta}\|_{L^{\infty}(0,T;L^{\infty}(\mathbb{T}^{3}))}\|\mathbb{ D}\mathbf{u}_{\delta}\|_{L^{2}(0,T;L^{2}(\mathbb{T}^{3}))}\|\nabla\boldsymbol{\varphi}\|_{L^{2}(0,T;L^{2}(\mathbb{T}^{3}))}
					\\&  \nonumber\quad+\|p_{1,\delta}\|_{L^{2}(0,T;L^{r}(\mathbb{T}^{3}))}\|\nabla\boldsymbol{\varphi}\|_{L^{2}(0,T;L^{r'}(\mathbb{T}^{3}))}
					\\&  \nonumber\quad+\|\phi_{\delta}\rho_{\delta}^{-1}\|_{L^{\infty}(0,T;L^{\infty}(\mathbb{T}^{3}))}\|\nabla \mu_{p,\delta}^{0}\|_{L^{2}(0,T;L^{2}(\mathbb{T}^{3}))}\|\boldsymbol{\varphi}\|_{L^{2}(0,T;L^{2}(\mathbb{T}^{3}))}
					\\& \nonumber\quad+\|\rho_{\delta}^{-1}\|_{L^{\infty}(0,T;L^{\infty}(\mathbb{T}^{3}))}\|\delta p_{0,\delta}\|_{L^{2}(0,T;L^{2}(\mathbb{T}^{3}))}\|\boldsymbol{\varphi}\|_{L^{2}(0,T;L^{2}(\mathbb{T}^{3}))}
					\\&\lesssim \|\boldsymbol{\varphi}\|_{L^{2}(0,T;W^{1,r'}(\mathbb{T}^{3}))},
				\end{align}
				where $\boldsymbol{\varphi} \in C_{0}^{\infty}(0,T;W^{1,r'}(\mathbb{T}^{3}))$.
				This yields the desired conclusion.
			\end{proof}
			
			Moreover, we will verify the following strong convergence results.
			\begin{lemma}\label{lem5.3}
				There exists a sequence, still denoted by $\mathbf{u}_{\delta}$, such that
				\begin{align*}
					(\mathbf{u}_{\delta}, \mathbb{P}(\mathbf{u}_{\delta}), \mathbb{G}(\mathbf{u}_{\delta}))\rightarrow_{\delta\rightarrow0}(\mathbf{u},\mathbb{G}(\mathbf{u})) \quad\text{in $L^{2}(0,T;L^{2}(\mathbb{T}^{3})\times L^{2}(\mathbb{T}^{3})\times H^{1}(\mathbb{T}^{3}))$}
				\end{align*}
				for all $0<T<\infty$.
			\end{lemma}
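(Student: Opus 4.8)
The plan is to apply the Helmholtz decomposition to $\mathbf{u}_{\delta}$ and to treat the solenoidal part $\mathbb{P}\mathbf{u}_{\delta}$ and the potential part $\nabla\mathbb{G}(\mathbf{u}_{\delta})$ by two separate Aubin--Lions arguments; the essential new ingredient is a $\delta$-uniform negative-norm bound on the genuine pressure $p_{0,\delta}$, which supplies the time-derivative estimate needed for $\mathbb{G}(\mathbf{u}_{\delta})$. First I would collect the $\delta$-uniform bounds. From the energy inequality \eqref{ConEnL2} and Lemma \ref{lem2} one has $\mathbf{u}_{\delta}$ bounded in $L^{\infty}(0,T;L^{2}(\mathbb{T}^{3}))\cap L^{2}(0,T;H^{1}(\mathbb{T}^{3}))$, $\phi_{\delta}$ bounded in $L^{\infty}(0,T;H^{s}(\mathbb{T}^{3}))$ with $-1-\theta<\phi_{\delta}<1+\theta$, $\mu_{p,\delta}^{0}$ bounded in $L^{2}(0,T;H^{1}(\mathbb{T}^{3}))$, and $\sqrt{\delta}\,\|p_{0,\delta}\|_{L^{2}(Q_{T})}\leq C$, so that $\delta p_{0,\delta}\to 0$ in $L^{2}(Q_{T})$; from Lemmas \ref{lem5.1} and \ref{diverge free}, $p_{1,\delta}$ is bounded in $L^{2}(0,T;L^{r}(\mathbb{T}^{3}))$ and $\partial_{t}\mathbb{P}\mathbf{u}_{\delta}$ in $L^{2}(0,T;W^{-1,r}(\mathbb{T}^{3}))$. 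Passing to a subsequence, $\mathbf{u}_{\delta}\rightharpoonup\mathbf{u}$ in $L^{2}(0,T;H^{1})$; since $\mathbb{P}$ and $\mathbb{G}=\Delta^{-1}\mathrm{div}$ are bounded linear operators, also $\mathbb{P}\mathbf{u}_{\delta}\rightharpoonup\mathbb{P}\mathbf{u}$ in $L^{2}(0,T;H^{1})$ and $\mathbb{G}(\mathbf{u}_{\delta})\rightharpoonup\mathbb{G}(\mathbf{u})$ in $L^{2}(0,T;H^{2}(\mathbb{T}^{3}))$, the latter bound being clear from $\Delta\mathbb{G}(\mathbf{u}_{\delta})=\mathrm{div}\mathbf{u}_{\delta}\in L^{2}(Q_{T})$.

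The key step is a $\delta$-uniform estimate $\|p_{0,\delta}\|_{L^{2}(0,T;H^{-s}(\mathbb{T}^{3}))}\leq C$. Eliminating $\mu_{p,\delta}^{0}$ between \eqref{model4-6}, i.e.\ $\alpha\Delta\mu_{p,\delta}^{0}=\mathrm{div}\mathbf{u}_{\delta}+\delta p_{0,\delta}$, and \eqref{model4-4}--\eqref{model4-5}, i.e.\ $\mu_{p,\delta}^{0}=F'(\phi_{\delta})+\Lambda^{2s}\phi_{\delta}+\alpha p_{0,\delta}-\overline{\mu}$ with $\overline{\mu}$ independent of $x$, one obtains on $\mathbb{T}^{3}$
\[
(\alpha^{2}\Delta-\delta)\,p_{0,\delta}=\mathrm{div}\mathbf{u}_{\delta}-\alpha\Delta\big(F'(\phi_{\delta})+\Lambda^{2s}\phi_{\delta}\big).
\]
The right-hand side is bounded in $L^{2}(0,T;H^{-s-2}(\mathbb{T}^{3}))$ uniformly in $\delta$: $\mathrm{div}\mathbf{u}_{\delta}\in L^{2}(Q_{T})$, $F'(\phi_{\delta})$ is bounded in $L^{\infty}(Q_{T})$ since $\phi_{\delta}\in(-1-\theta,1+\theta)$, and $\Lambda^{2s}\phi_{\delta}$ is bounded in $L^{\infty}(0,T;H^{-s})$. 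On mean-value-free functions $(\alpha^{2}\Delta-\delta)^{-1}$ has Fourier symbol $-(\alpha^{2}|k|^{2}+\delta)^{-1}$, hence maps $H^{m}$ into $H^{m+2}$ with norm at most $\alpha^{-2}$, uniformly in $\delta\geq0$; applying it yields the claimed uniform bound on $p_{0,\delta}$ (and, in the limit $\delta\to0$, the regularity $p_{0}\in L^{2}(0,T;H^{-s})$ of Definition \ref{def:weak-solutions}).

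It remains to assemble the compactness. By the definition \eqref{new pressure} of $p_{1,\delta}$, $\partial_{t}\mathbb{G}(\mathbf{u}_{\delta})=p_{1,\delta}-\zeta p_{0,\delta}$, which by Lemma \ref{lem5.1} and the preceding step is bounded in $L^{2}(0,T;H^{-s}(\mathbb{T}^{3}))$ (using $L^{r}(\mathbb{T}^{3})\hookrightarrow H^{-s}(\mathbb{T}^{3})$ for $s>\tfrac{3}{2}$). Since $H^{2}\hookrightarrow\hookrightarrow H^{1}\hookrightarrow H^{-s}$, the Aubin--Lions lemma gives $\mathbb{G}(\mathbf{u}_{\delta})\to\mathbb{G}(\mathbf{u})$ strongly in $L^{2}(0,T;H^{1}(\mathbb{T}^{3}))$. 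Similarly $\mathbb{P}\mathbf{u}_{\delta}$ is bounded in $L^{2}(0,T;H^{1})$ with $\partial_{t}\mathbb{P}\mathbf{u}_{\delta}$ bounded in $L^{2}(0,T;W^{-1,r})$ (Lemma \ref{diverge free}), and $H^{1}\hookrightarrow\hookrightarrow L^{2}\hookrightarrow W^{-1,r}$ gives $\mathbb{P}\mathbf{u}_{\delta}\to\mathbb{P}\mathbf{u}$ strongly in $L^{2}(Q_{T})$. Adding the two limits, $\mathbf{u}_{\delta}=\mathbb{P}\mathbf{u}_{\delta}+\nabla\mathbb{G}(\mathbf{u}_{\delta})\to\mathbb{P}\mathbf{u}+\nabla\mathbb{G}(\mathbf{u})=\mathbf{u}$ strongly in $L^{2}(Q_{T})$, which is the assertion. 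The main obstacle is the elliptic pressure bound of the second paragraph: the time derivative of the potential part of $\mathbf{u}_{\delta}$ sees $p_{0,\delta}$ directly through \eqref{new pressure}, so without a $\delta$-uniform control of $p_{0,\delta}$ in $H^{-s}$ --- precisely where the two-derivative gain of the elliptic inversion and the high order of $\Lambda^{2s}$ come in --- the gradient part of $\mathbf{u}_{\delta}$ would fail to be compact in time.
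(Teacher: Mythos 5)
Your proof is correct, and its skeleton is the same as the paper's: the missing time-regularity is $\partial_{t}\mathbb{G}(\mathbf{u}_{\delta})\in L^{2}(0,T;H^{-s}(\mathbb{T}^{3}))$, and both arguments obtain it from the constitutive relations together with Lemma \ref{lem5.1}. Where you diverge is the intermediate step. The paper substitutes $p_{0,\delta}=\zeta^{-1}(p_{1,\delta}-\partial_{t}\mathbb{G}(\mathbf{u}_{\delta}))$ into the weak form \eqref{weak4}, expressing $\alpha\partial_{t}\mathbb{G}(\mathbf{u}_{\delta})$ directly through $p_{1,\delta}$, $\mu_{p,\delta}^{0}$, $F'(\phi_{\delta})$, $\overline{\mu}_{\delta}$ and $\Lambda^{2s}\phi_{\delta}$, all of which are already controlled, so no bound on $p_{0,\delta}$ itself is ever invoked. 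You instead eliminate $\mu_{p,\delta}^{0}$ between \eqref{model4-6} and \eqref{model4-4}--\eqref{model4-5} to get the elliptic identity $(\alpha^{2}\Delta-\delta)p_{0,\delta}=\mathrm{div}\,\mathbf{u}_{\delta}-\alpha\Delta\big(F'(\phi_{\delta})+\Lambda^{2s}\phi_{\delta}\big)$ and invert it with a constant uniform in $\delta\geq 0$, obtaining an explicit $\delta$-uniform bound on $p_{0,\delta}$ in $L^{2}(0,T;H^{-s})$ and then $\partial_{t}\mathbb{G}(\mathbf{u}_{\delta})=p_{1,\delta}-\zeta p_{0,\delta}$. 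The final compactness step also differs cosmetically: the paper combines the $\mathbb{G}$- and $\mathbb{P}$-time-derivative bounds into $\partial_{t}\mathbf{u}_{\delta}\in L^{2}(0,T;H^{-1-s})$, applies Aubin--Lions once to $\mathbf{u}_{\delta}$, and transfers the convergence to $\mathbb{G}(\mathbf{u}_{\delta})$ and $\mathbb{P}\mathbf{u}_{\delta}$ by $L^{2}$-boundedness of the projections, whereas you run two separate Aubin--Lions arguments and sum. Both are valid; your variant has the modest advantage of producing the $p_{0}\in L^{2}(0,T;H^{-s})$ regularity of Definition \ref{def:weak-solutions} as an explicit by-product, at the cost of one extra elliptic inversion.
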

			\begin{proof}
				By substituting \eqref{new pressure} into \eqref{weak4} we arrive at
				\begin{align*}
					(\alpha\partial_{t} \mathbb{G}(\mathbf{u}_{\delta}),\psi)_{Q_{T}}=(\alpha p_{1,\delta}-\zeta\mu_{p,\delta}^{0}-\zeta\overline{\mu}_{\delta}+\zeta F'(\phi_{\delta}),\psi)_{Q_{T}}
					+\zeta(\Lambda^{s}\phi,\Lambda^{s}\psi)_{Q_{T}}
				\end{align*}
				which together with  \eqref{ConEnL2} and Lemma \ref{lem5.1} yields
				\begin{align*}
					\|\partial_{t}\mathbb{G}(\mathbf{u}_{\delta})\|_{L^{2}(0,T;H^{-s}(\mathbb{T}^{3}))}\leq C.
				\end{align*}
				With the help of Lemma \ref{diverge free}, one obtains
				\begin{align}
					\label{eqs:dt-u-delta}
					\partial_{t}\mathbf{u}_{\delta}\ \text{is uniformly bounded in}\ L^{2}(0,T;H^{-1-s}(\mathbb{T}^{3})).
				\end{align}
				
				By virtue of the Aubin-Lions lemma, there exists a sequence, still denoted by $\mathbf{u}_{\delta}$ such that   $\mathbf{u}_{\delta}\rightarrow_{\delta\rightarrow0}\mathbf{u}$ in $L^{2}(Q_{T})$. Furthermore, in view of $\|\mathbb{G}(\mathbf{u}_{\delta})-\mathbb{G}(\mathbf{u})\|_{L^{2}(0,T;H^{1}(\mathbb{T}^{3}))}\leq C \|\mathbf{u}_{\delta}-\mathbf{u}\|_{L^{2}(0,T,L^{2}(\mathbb{T}^{3}))}$, one immediately gets $\mathbb{G}(\mathbf{u}_{\delta})\rightarrow_{\delta\rightarrow0}\mathbb{G}(\mathbf{u})$ in $L^{2}(0,T;H^{1}(\mathbb{T}^{3}))$ and $\mathbb{P}(\mathbf{u}_{\delta})\rightarrow_{\delta\rightarrow0}\mathbb{P}(\mathbf{u})$ in $L^{2}(0,T;L^{2}(\mathbb{T}^{3}))$.
				
				Therefore we conclude  the desired strong convergence.
			\end{proof}
			
			For $\phi_{\delta}$ we have  the following pointwise convergence.
			\begin{lemma}\label{lem5.5}
				There exists a sequence, still denoted by $\phi_{\delta}$ such that $\phi_{\delta}\rightarrow_{\delta\rightarrow0}\phi$ almost everywhere in $Q_T$ for all $0<T<\infty$.
			\end{lemma}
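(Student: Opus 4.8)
The plan is to promote the weak-$\ast$ convergence of $\phi_{\delta}$ to strong $L^{2}(Q_{T})$ convergence by an Aubin--Lions compactness argument, and then extract a pointwise-convergent subsequence.

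First I would record the uniform-in-$\delta$ bounds already at hand. The energy inequality \eqref{ConEnL2} gives $\mathbf{u}_{\delta}$ bounded in $L^{\infty}(0,T;L^{2}(\mathbb{T}^{3}))\cap L^{2}(0,T;H^{1}(\mathbb{T}^{3}))$, $\nabla\mu_{p,\delta}^{0}$ bounded in $L^{2}(Q_{T})$ (hence $\mu_{p,\delta}^{0}$ bounded in $L^{2}(0,T;H^{1}(\mathbb{T}^{3}))$ by Poincar\'e, since $\overline{\mu_{p,\delta}^{0}}=0$), and $\phi_{\delta}$ bounded in $L^{\infty}(0,T;H^{s}(\mathbb{T}^{3}))$ with $s>\frac{3}{2}$; in addition, Lemma \ref{lem2} provides the pointwise bound $\phi_{\delta}\in(1-\theta,1+\theta)$, so $\phi_{\delta}$ is bounded in $L^{\infty}(Q_{T})$ as well. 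Up to a subsequence we may then fix $\phi$ with $\phi_{\delta}\rightharpoonup^{\ast}\phi$ in $L^{\infty}(0,T;H^{s}(\mathbb{T}^{3}))$, and the goal is to show that this convergence is in fact strong in $L^{2}(Q_{T})$.

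Second, I would estimate the time derivative. Equation \eqref{model4-3} reads $\partial_{t}\phi_{\delta}=-\mathrm{div}(\phi_{\delta}\mathbf{u}_{\delta})+\Delta\mu_{p,\delta}^{0}$; crucially the $\delta$-regularization terms in \eqref{model4} do not enter the $\phi$-equation. Since $\phi_{\delta}$ is bounded in $L^{\infty}(Q_{T})$ and $\mathbf{u}_{\delta}$ in $L^{2}(0,T;L^{2}(\mathbb{T}^{3}))$, the product $\phi_{\delta}\mathbf{u}_{\delta}$ is bounded in $L^{2}(0,T;L^{2}(\mathbb{T}^{3}))$, so $\mathrm{div}(\phi_{\delta}\mathbf{u}_{\delta})$ is bounded in $L^{2}(0,T;H^{-1}(\mathbb{T}^{3}))$; likewise $\Delta\mu_{p,\delta}^{0}$ is bounded in $L^{2}(0,T;H^{-1}(\mathbb{T}^{3}))$ from the bound on $\mu_{p,\delta}^{0}$. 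Hence $\partial_{t}\phi_{\delta}$ is bounded in $L^{2}(0,T;H^{-1}(\mathbb{T}^{3}))$, uniformly in $\delta$.

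Finally, using the compact embedding $H^{s}(\mathbb{T}^{3})\hookrightarrow\hookrightarrow L^{2}(\mathbb{T}^{3})\hookrightarrow H^{-1}(\mathbb{T}^{3})$ together with the two bounds just obtained, the Aubin--Lions compactness lemma yields relative compactness of $\{\phi_{\delta}\}$ in $C([0,T];L^{2}(\mathbb{T}^{3}))$, in particular in $L^{2}(Q_{T})$. Therefore, along a subsequence (not relabeled), $\phi_{\delta}\to\phi$ strongly in $L^{2}(Q_{T})$, and extracting one further subsequence gives $\phi_{\delta}\to\phi$ almost everywhere in $Q_{T}$, as claimed. The argument is essentially routine; the only point requiring care is that every bound feeding into Aubin--Lions be independent of $\delta$, which holds precisely because the $\delta$-terms are confined to the momentum and continuity equations while the regularity $s>\frac{3}{2}$ places $\phi_{\delta}$ in a space compactly embedded in $L^{2}$.
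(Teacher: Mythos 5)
Your proof is correct and follows essentially the same Aubin--Lions strategy as the paper: bound $\phi_\delta$ in $L^\infty(0,T;H^s(\mathbb{T}^3))$, bound $\partial_t\phi_\delta$ uniformly in $\delta$, and invoke the $L^\infty$-version of Aubin--Lions to get strong $L^2(Q_T)$ convergence and an a.e.\ convergent subsequence. The one small deviation is in the time-derivative estimate: the paper uses \eqref{model4-6} to replace $\Delta\mu_{p,\delta}^0=\alpha^{-1}(\mathrm{div}\,\mathbf{u}_\delta+\delta p_{0,\delta})$ and expands $\mathrm{div}(\phi_\delta\mathbf{u}_\delta)$ by the product rule, thereby obtaining $\partial_t\phi_\delta$ uniformly bounded in $L^2(0,T;L^2(\mathbb{T}^3))$ (with a factor $\alpha^{-1}$, harmless since $\alpha$ is fixed throughout this section), whereas you keep the divergence and Laplacian unexpanded and settle for $\partial_t\phi_\delta$ bounded in $L^2(0,T;H^{-1}(\mathbb{T}^3))$. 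Both bounds suffice for Aubin--Lions with the chain $H^s\hookrightarrow\hookrightarrow L^2\hookrightarrow H^{-1}$; your weaker bound avoids \eqref{model4-6} entirely and is incidentally $\alpha$-uniform, while the paper's stronger $L^2$-in-time bound is later reused as \eqref{uniform phi-alpha} inside the proof of Lemma~\ref{lem:higher eatimate}, so establishing it here costs nothing extra.
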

			\begin{proof}
				In view of \eqref{model4-3}, \eqref{model4-6} and \eqref{ConEnL2}, we get $\partial_{t}\phi_{\delta}$ is uniformly bounded in  $L^{2}(0,T;L^{2}(\mathbb{T}^{3}))$ and $\phi_{\delta}$ is uniformly bounded in $L^{\infty}(0,T;H^{s}(\mathbb{T}^{3}))$. Hence, by the Aubin-Lions lemma($L^{\infty}$ version), one concludes $\phi_{\delta}\rightarrow_{\delta\rightarrow0}\phi$ almost everywhere in $Q_T$ for a suitable sequence for all $0<T<\infty$.
			\end{proof}
			
			In the following, we obtain higher order spatial regularity for the order parameter $\phi_\delta$ via the momentum equation.
			\begin{lemma}\label{lem:higher eatimate}
				
				Let $s>\frac{3}{2}$, then it follows
				\begin{align*}
					\|\phi_\delta\|_{L^{2}(0,T;H^{s+\frac{\gamma}{2}}(\mathbb{T}^{3}))}
					\leq C
				\end{align*}
				for  $0\leq\gamma\leq 1$, where $C>0$ depends on the initial data and $ T> 0$, but is independent of $\delta$ and $\alpha$.
			\end{lemma}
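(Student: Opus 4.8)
The plan is to read off the extra $\tfrac{\gamma}{2}$ spatial derivatives of $\phi_\delta$ from the constitutive relation linking $\mu_{p,\delta}^0$ and $\phi_\delta$, treating the pressure contribution through the auxiliary pressure $p_{1,\delta}$ already controlled in Lemma~\ref{lem5.1}. Combining \eqref{model4-4} and \eqref{model4-5} gives
\begin{align*}
	\Lambda^{2s}\phi_\delta = \mu_{p,\delta}^0 - F'(\phi_\delta) - \alpha p_{0,\delta} + \overline{\mu}
	\quad \text{in } Q_T .
\end{align*}
After a routine Fourier truncation (needed to legitimize the pairings below before $\phi_\delta$ is known to lie in $H^{s+\frac{\gamma}{2}}$), I would test this identity with the multiplier $\Lambda^{\gamma}\phi_\delta$ and integrate over $Q_T$. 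The left‑hand side yields $\int_0^T\|\phi_\delta\|_{\dot H^{s+\gamma/2}}^2\,\dt$, the mean term $(\overline{\mu},\Lambda^\gamma\phi_\delta)_{Q_T}$ vanishes for $\gamma>0$, and the terms $(\mu_{p,\delta}^0,\Lambda^\gamma\phi_\delta)_{Q_T}$ and $(F'(\phi_\delta),\Lambda^\gamma\phi_\delta)_{Q_T}$ are estimated — uniformly in $\delta$ and $\alpha$ — by moving $\Lambda^\gamma$ onto one factor: for $0\le\gamma\le1$ one has $\|\mu_{p,\delta}^0\|_{\dot H^\gamma}\lesssim\|\nabla\mu_{p,\delta}^0\|_{L^2}$ and $\|F'(\phi_\delta)\|_{L^2}\le C(\|\phi_\delta\|_{L^\infty})$, both controlled by the energy inequality \eqref{ConEnL2}, by $\phi_\delta\in(1-\theta,1+\theta)$ and by $\phi_\delta\in L^\infty(0,T;H^s)$.

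The only delicate term is the pressure contribution $-\alpha(p_{0,\delta},\Lambda^\gamma\phi_\delta)_{Q_T}$, and this is where the structure of the momentum equation enters. Using $p_{1,\delta}=\zeta p_{0,\delta}+\partial_t\mathbb{G}(\mathbf{u}_\delta)$ from \eqref{new pressure}, I would split
\begin{align*}
	-\alpha(p_{0,\delta},\Lambda^\gamma\phi_\delta)_{Q_T}
	= -\tfrac{\alpha}{\zeta}(p_{1,\delta},\Lambda^\gamma\phi_\delta)_{Q_T}
	+ \tfrac{\alpha}{\zeta}(\partial_t\mathbb{G}(\mathbf{u}_\delta),\Lambda^\gamma\phi_\delta)_{Q_T}.
\end{align*}
Since $s>\tfrac32$ and $\gamma\le1$ one can fix $r\in(1,\tfrac32)$ with $H^{s-\gamma}(\bbt^3)\hookrightarrow L^{r'}(\bbt^3)$, so the first term is bounded by $\tfrac{\alpha}{\zeta}\|p_{1,\delta}\|_{L^2(0,T;L^r)}\|\Lambda^\gamma\phi_\delta\|_{L^2(0,T;L^{r'})}\le C$ via Lemma~\ref{lem5.1} and $\phi_\delta\in L^\infty(0,T;H^s)$, using $\tfrac{\alpha}{\zeta}=\tfrac{\alpha}{1+\alpha}\le1$. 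For the second term I would integrate by parts in time: the endpoint contributions $(\mathbb{G}(\mathbf{u}_\delta)(t),\Lambda^\gamma\phi_\delta(t))$ at $t\in\{0,T\}$ are bounded by $\|\mathbf{u}_\delta(t)\|_{L^2}\|\phi_\delta(t)\|_{H^1}$, hence by $E(\mathbf{u}_{0},\phi_{0})$, and there remains $-\tfrac{\alpha}{\zeta}(\mathbb{G}(\mathbf{u}_\delta),\Lambda^\gamma\partial_t\phi_\delta)_{Q_T}$.

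At this point I would invoke the identity $\mathbb{G}(\mathbf{u}_\delta)=\alpha\mu_{p,\delta}^0-\delta\Delta^{-1}p_{0,\delta}$, an immediate consequence of \eqref{model4-6} and the mean‑zero normalisations, which recasts the remaining term as $-\tfrac{\alpha}{\zeta}(\Lambda^\gamma\mu_{p,\delta}^0,\alpha\partial_t\phi_\delta)_{Q_T}+\tfrac{1}{\zeta}(\Delta^{-1}(\delta p_{0,\delta}),\Lambda^\gamma(\alpha\partial_t\phi_\delta))_{Q_T}$. The key point — anticipated in Remark~\ref{rem:incompressible-limit} — is that $\alpha\partial_t\phi_\delta$ is bounded in $L^2(Q_T)$ \emph{uniformly in $\delta$ and $\alpha$}: indeed \eqref{model4-3} and \eqref{model4-6} give $\alpha\partial_t\phi_\delta=\mathrm{div}\,\mathbf{u}_\delta+\delta p_{0,\delta}-\alpha\,\mathrm{div}(\phi_\delta\mathbf{u}_\delta)$, each summand controlled by \eqref{ConEnL2} (with $\|\delta p_{0,\delta}\|_{L^2(Q_T)}\le\|\sqrt{\delta}\,p_{0,\delta}\|_{L^2(Q_T)}$ as $\delta\le1$, and $\phi_\delta\in L^\infty(0,T;H^s)$, $s>\tfrac32$, for the last term). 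Combining this with $\|\Lambda^\gamma\mu_{p,\delta}^0\|_{L^2(Q_T)}\lesssim\|\nabla\mu_{p,\delta}^0\|_{L^2(Q_T)}$ and $\|\Delta^{-1}(\delta p_{0,\delta})\|_{L^2(0,T;H^2)}\lesssim\|\delta p_{0,\delta}\|_{L^2(Q_T)}$ closes the estimate; removing the Fourier truncation and absorbing $\|\phi_\delta\|_{L^\infty(0,T;H^s)}$ yields the asserted bound, independent of $\delta$ and $\alpha$. The main obstacle is exactly this pressure term $\alpha(p_{0,\delta},\Lambda^\gamma\phi_\delta)$: lacking any uniform Lebesgue bound on $p_{0,\delta}$ itself, one must route it through $p_{1,\delta}\in L^2(0,T;L^r)$ and the time integration by parts, after which the powers of $\alpha$ and $\delta$ balance precisely because of the uniform control of $\alpha\partial_t\phi_\delta$.
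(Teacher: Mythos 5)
Your proposal is correct and proves the same bound, but via a genuinely different decomposition from the paper's. The paper tests the momentum equation \eqref{weak1} directly with the gradient test function $\nabla\Delta^{-1}\Lambda^{\gamma}(\psi\phi_\delta)$ (with a time cutoff $\psi$), then substitutes $p_{0,\delta}=\frac{1}{\alpha}(\mu_{p,\delta}^{0}-\Lambda^{2s}\phi_{\delta}-F'(\phi_{\delta})+\bar{\mu}_{\delta})$ so that the coercive term $\frac{\zeta}{\alpha}\|\Lambda^{s+\gamma/2}\phi_\delta\|^2_{L^2(Q_T)}$ emerges in \eqref{lemh55-5}, and the remaining terms $J_1,\ldots,J_7$ carry $\alpha^{-1}$ factors (from $\|\partial_t\phi_\delta\|_{L^2(Q_T)}\lesssim\alpha^{-1}$) that cancel after multiplying through by $\alpha$. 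You instead start directly from the constitutive relation tested against $\Lambda^\gamma\phi_\delta$, isolate $-\alpha(p_{0,\delta},\Lambda^\gamma\phi_\delta)$, and then route the pressure through $p_{1,\delta}$ (invoking Lemma~\ref{lem5.1} as a black box) plus the purely kinematic identity $\mathbb{G}(\mathbf{u}_\delta)=\alpha\mu_{p,\delta}^0-\delta\Delta^{-1}p_{0,\delta}$ from \eqref{model4-6}. The cancellation is the same — you exploit the uniform bound on $\alpha\partial_t\phi_\delta$ where the paper exploits $\alpha^{-1}\|\partial_t\phi_\delta\|$ — but your decomposition is more modular: it makes visible which piece of the momentum equation one actually needs (only Lemma~\ref{lem5.1}'s $p_{1,\delta}$ bound), whereas the paper re-derives the pressure control in-line by estimating all of $J_1,\ldots,J_7$. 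The paper's route is arguably tighter since it never separates the harmonic part; yours buys conceptual clarity at the cost of an extra integration by parts in time. One small technical remark: your ``Fourier truncation'' legitimizes the spatial pairings, but the integration by parts in time with endpoint evaluations at $t\in\{0,T\}$ would be cleaner with a time cutoff $\psi\in C_0^\infty(0,T)$ as in the paper (it is justifiable via $BC_\omega$ traces, but the cutoff avoids the fuss); alternatively you can observe that for fixed $\delta>0$ the approximate solution already has $\phi_\delta\in L^2(0,T;H^{2s})$, so no regularization in space is needed at all.
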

			\begin{proof}
				
				It follows from \eqref{ConEnL2} that
				\begin{align*}
					&\mathbf{u}_\delta\cdot\nabla\mathbf{u}_\delta, p_{0,\delta} \mathbf{u}_\delta\in L^\frac{4}{3}(0,T;L^{\frac{6}{5}}(\mathbb{T}^3)),
					\\& \rho_\delta^{-1}S(\phi_{\delta},\mathbb{ D}\mathbf{u}_\delta),\rho_\delta^{-1}\phi_\delta \nabla \mu_{p,\delta}^{0}\in L^2(0,T;L^{2}(\mathbb{T}^3)),
					\\& \nabla\rho_\delta^{-1}\cdot S(\phi_\delta,\mathbb{ D}\mathbf{u}_\delta)\in L^2(0,T;L^{\frac{6}{5}}(\mathbb{T}^3)).
				\end{align*}
				Therefore  \eqref{weak1} holds for  $\boldsymbol{\varphi}\in  H^1(0,T;H^{-1}(\mathbb{T}^3))\cap L^4(0,T;L^{6}(\mathbb{T}^3))\cap L^2(0,T;H^{1}(\mathbb{T}^3))$.
				
				Due to the Sobolev embedding and elliptic regularity theory  we have
				\begin{align*}
					&\|\nabla\Delta^{-1}\Lambda^{\gamma}(\psi\phi_{\delta})\|_{L^4(0,T;L^{6}(\mathbb{T}^3))} \leq
					C\|\Lambda^{\gamma}(\psi\phi_{\delta})\|_{L^{\infty}(0,T;L^{2}(\mathbb{T}^{3}))}
					\leq C\|\psi\phi_{\delta}\|_{L^{\infty}(0,T;H^{\gamma}(\mathbb{T}^{3}))},
					\\&
					\|\nabla\Delta^{-1}\Lambda^{\gamma}(\psi\phi_{\delta})\|_{L^{2}(0,T;H^{1}(\mathbb{T}^{3}))} \leq
					C\|\Lambda^{\gamma}(\psi\phi_{\delta})\|_{L^{2}(0,T;L^{2}(\mathbb{T}^{3}))}
					\leq C\|\psi\phi_{\delta}\|_{L^{2}(0,T;H^{\gamma}(\mathbb{T}^{3}))},
					\\&
					\|\nabla\Delta^{-1}\Lambda^{\gamma}(\psi\phi_{\delta})\|_{H^{1}(0,T;H^{-1}(\mathbb{T}^{3}))}\leq
					C\|\Delta^{-1}\Lambda^{\gamma}(\psi\phi_{\delta})\|_{H^{1}(0,T;L^{2}(\mathbb{T}^{3}))}
					\leq C\|\psi\phi_{\delta}\|_{H^{1}(0,T;L^{2}(\mathbb{T}^{3}))}
				\end{align*}
				for $0\leq\gamma\leq1$ and $\psi\in C_{0}^{\infty}(0,T)$.

				Consequently we arrive at
				\begin{align}\label{weak1111}
					&\nonumber(\mathrm{div}\mathbf{u}_{\delta},\partial_{t}(\Delta^{-1}\Lambda^{\gamma}(\psi\phi_{\delta})))_{Q_{T}}-(\mathbf{u}_{\delta}\cdot\nabla \mathbf{u}_{\delta},\nabla\Delta^{-1}\Lambda^{\gamma}(\psi\phi_{\delta}))_{Q_{T}}
					\\&\nonumber\quad+(\nabla(\rho_{\delta})^{-1}S(\phi_{\delta},\mathbb{ D}\mathbf{u}_{\delta}),\nabla\Delta^{-1}\Lambda^{\gamma}(\psi\phi_{\delta}))_{Q_{T}}+((\rho_{\delta}^{-1}S(\phi_{\delta},\mathbb{ D}\mathbf{u}_{\delta}),\nabla(\nabla\Delta^{-1}\Lambda^{\gamma}(\psi\phi_{\delta})))_{Q_{T}}
					\\&\nonumber\quad -\zeta(p_{0,\delta},\mathrm{div}(\nabla\Delta^{-1}\Lambda^{\gamma}(\psi\phi_{\delta})))_{Q_{T}}
					+((\rho_{\delta})^{-1}\phi_{\delta}\nabla\mu_{p,\delta}^{0},\nabla\Delta^{-1}\Lambda^{\gamma}(\psi\phi_{\delta}))_{Q_{T}}
					\\&\quad
					+(\frac{\varepsilon\delta}{4\alpha}(\rho_{\delta})^{-1}\mathbf{u}_{\delta}p_{0,\delta},\nabla\Delta^{-1}\Lambda^{\gamma}(\psi\phi_{\delta}))_{Q_{T}}=0.
				\end{align}
				
				We  submit  $p_{0,\delta}=\frac{1}{\alpha}(\mu_{p,\delta}^{0}-\Lambda^{2s}\phi_{\delta}-F'(\phi_{\delta})+\bar{\mu}_{\delta})$ into \eqref{weak1111} and then get
				\begin{align}
					& (\zeta\alpha^{-1}\Lambda^{s+\frac{\gamma}{2}}\phi_{\delta},\Lambda^{s+\frac{\gamma}{2}}(\psi\phi_{\delta}))_{Q_{T}}
					\nonumber \\& =
					(\mathbf{u}_{\delta},\nabla\Delta^{-1}\Lambda^{\gamma}\partial_t (\psi\phi_{\delta}))_{Q^{T}}
					+(\mathbf{u}_{\delta}\cdot\nabla
					\mathbf{u}_{\delta},\nabla\Delta^{-1}\Lambda^{\gamma}(\psi\phi_{\delta}))_{Q_{T}}
					\nonumber \\&\quad+((\rho_{\delta}^{-1}S(\phi_{\delta},\mathbb{ D}\mathbf{u}_{\delta}),\nabla(\nabla\Delta^{-1}\Lambda^{\gamma}(\psi\phi_{\delta})))_{Q_{T}}
					\nonumber \\&\quad+(\nabla(\rho_{\delta})^{-1}S(\phi_{\delta},\mathbb{ D}\mathbf{u}_{\delta}),\nabla\Delta^{-1}\Lambda^{\gamma}(\psi\phi_{\delta}))_{Q_{T}}
					\nonumber\\&\quad +(\phi_{\delta}\rho_{\delta}^{-1} \nabla \mu_{p,\delta},\nabla\Delta^{-1}\Lambda^{\gamma}(\psi\phi_{\delta}))_{Q_{T}}
					\nonumber\\& \quad+\bigg(\frac{\varepsilon\delta}{4\alpha\rho_{\delta}}  p_{0,\delta} \mathbf{u}_{\delta},\nabla\Delta^{-1}\Lambda^{\gamma}(\psi\phi_{\delta})\bigg)_{Q_{T}}
					\nonumber \\&\quad-\frac{\zeta}{\alpha}(\nabla(\mu_{p,\delta}^{0}-F'(\phi_{\delta})),\nabla\Delta^{-1}\Lambda^{\gamma}(\psi\phi_{\delta}))_{Q_{T}}
					\nonumber \\&\eqqcolon\sum_{i=1}^{7}J_{i}.\label{lemh55-5}
				\end{align}

				Furthermore, in view of \eqref{model4-3} and \eqref{ConEnL2} we can find
				\begin{align}\|\partial_{t}\phi_{\delta}\|_{L^{2}(0,T;L^{2}(\mathbb{T}^{3}))}&=\|\mathrm{div}(\phi_{\delta}  \mathbf{u}_{\delta})-\Delta\mu_{p,\delta}^{0}\|_{L^{2}(0,T;L^{2}(\mathbb{T}^{3}))}
					\nonumber\\&=\|\phi_{\delta}\mathrm{div}\mathbf{u}_{\delta}+\nabla\phi_{\delta}\cdot  \mathbf{u}_{\delta}-\alpha^{-1}(\mathrm{div}  \mathbf{u}_{\delta} +\delta p_{0,\delta})\|_{L^{2}(0,T;L^{2}(\mathbb{T}^{3}))}
					\nonumber\\&\lesssim \|\phi_{\delta}\|_{L^{\infty}(Q_T))}
					\|\mathrm{div}\mathbf{u}_{\delta}\|_{L^{2}(Q_T)}+\|\nabla\phi_{\delta}\|_{L^{\infty}(0,T;L^{3}(\mathbb{T}^{3}))}
					\|\mathbf{u}_{\delta}\|_{L^{2}(0,T;L^{6}(\mathbb{T}^{3}))}
					\nonumber\\& \quad+\alpha^{-1}\|\mathrm{div}  \mathbf{u}_{\delta}\|_{L^{2}(0,T;L^{2}(\mathbb{T}^{3}))}
					+\alpha^{-1}\|\delta p_{0,\delta}\|_{L^{2}(0,T;L^{2}(\mathbb{T}^{3}))}
					\nonumber\\&\lesssim \alpha^{-1}                  \label{uniform phi-alpha}
				\end{align}  which and \eqref{ConEnL2} imply
				\begin{align}
					\label{lemh55-7}
					\nonumber  |J_{1}|&
					\lesssim \|\mathbf{u}_{\delta}\|_{L^{\infty}(0,T;L^{2}(\mathbb{T}^{3}))}\|\nabla\Delta^{-1}\Lambda^{\gamma}\partial_t (\psi\phi_{\delta})\|_{L^{1}(0,T;L^{2}(\mathbb{T}^{3}))}
					\\& \nonumber\lesssim (\|\psi\|_{L^{\infty}(0,T)}\|\partial_t \phi_{\delta}\|_{L^{2}(0,T;L^{2}(\mathbb{T}^{3}))}+\|\partial_t\psi\|_{L^{1}(0,T)}\|\phi_{\delta}\|_{L^{\infty}(0,T;L^{2}(\mathbb{T}^{3}))})
					\\&\lesssim (\frac{1}{\alpha}\|\psi\|_{L^{\infty}(0,T)}+\|\partial_t\psi\|_{L^{1}(0,T)})
				\end{align}
				for $\gamma\leq 1$.
				Similarly, using the fact that $H^{s-\gamma+1}\hookrightarrow L^{\infty}$ for $\gamma\leq 1<s-\frac{1}{2}$, we derive
				\begin{align}
					\label{lemh55-8}
					\nonumber  |J_{2}|
					& \lesssim \|\mathbf{u}_{\delta}\|_{L^{\infty}(0,T;L^{2}(\mathbb{T}^{3}))}\|\nabla\mathbf{u}_{\delta}\|_{L^{2}(0,T;L^{2}(\mathbb{T}^{3}))}\|\nabla\Delta^{-1}\Lambda^{\gamma} (\psi\phi_{\delta})\|_{L^{2}(0,T;L^{\infty}(\mathbb{T}^{3}))}
					\\&\nonumber\lesssim \|\mathbf{u}_{\delta}\|_{L^{\infty}(0,T;L^{2}(\mathbb{T}^{3}))}\|\psi\|_{L^{2}(0,T)}\|\mathbf{u}_{\delta}\|_{L^{2}(0,T;H^{1}(\mathbb{T}^{3}))}\|\nabla\Delta^{-1}\Lambda^{\gamma} (\psi\phi_{\delta})\|_{L^{2}(0,T;H^{s-\gamma+1}(\mathbb{T}^{3}))}
					\\&\nonumber\lesssim \|\mathbf{u}_{\delta}\|_{L^{\infty}(0,T;L^{2}(\mathbb{T}^{3}))}\|\psi\|_{L^{\infty}(0,T)}\|\mathbf{u}_{\delta}\|_{L^{2}(0,T;H^{1}(\mathbb{T}^{3}))}\| \phi_{\delta}\|_{L^{\infty}(0,T;H^{s}(\mathbb{T}^{3}))}
					\\&\lesssim \|\psi\|_{L^{\infty}(0,T)}.
				\end{align}

				For $J_{3}$ and $J_{4}$,  it follows from $H^{s}(\mathbb{T}^{3})\hookrightarrow W^{1,3}(\mathbb{T}^{3})(s>\frac{3}{2})$, $H^{1}(\mathbb{T}^{3})\hookrightarrow L^{6}(\mathbb{T}^{3})$ and H\"{o}lder inequality  that
				\begin{align}
					\label{lemh55-9}
					\nonumber |J_{3}|+|J_{4}|& \lesssim |(\nabla\rho_{\delta}^{-1}S(\phi_{\delta},\mathbb{ D}\mathbf{u}_{\delta}),\nabla\Delta^{-1}\Lambda^{\gamma}(\psi\phi_{\delta}))_{Q_{T}}|+|(\rho_{\delta}^{-1}S(\phi_{\delta},\mathbb{ D}\mathbf{u}_{\delta}),\nabla^{2}\Delta^{-1}\Lambda^{\gamma}(\psi\phi_{\delta}))_{Q_{T}}|
					\\&\nonumber\lesssim \|\psi\|_{L^{2}(0,T)}\|\nabla\phi_{\delta}\|_{L^{\infty}(0,T;L^{3}(\mathbb{T}^{3}))}\|\mathbb{ D}\mathbf{u}_{\delta}\|_{L^{2}(0,T;L^{2}(\mathbb{T}^{3}))}\|\nabla\Delta^{-1}\Lambda^{\gamma} \phi_{\delta}\|_{L^{\infty}(0,T;L^{6}(\mathbb{T}^{3}))}
					\\&\nonumber \quad+\|\psi\|_{L^{2}(0,T)}\|\mathbb{ D}\mathbf{u}_{\delta}\|_{L^{2}(0,T;L^{2}(\mathbb{T}^{3}))}\|\nabla^{2}\Delta^{-1}\Lambda^{\gamma} \phi_{\delta}\|_{L^{\infty}(0,T;L^{2}(\mathbb{T}^{3}))}
					\\&\nonumber\lesssim \|\psi\|_{L^{\infty}(0,T)}\|\phi_{\delta}\|_{L^{\infty}(0,T;H^{s}(\mathbb{T}^{3}))}\|\mathbf{u}_{\delta}\|_{L^{2}(0,T;H^{1}(\mathbb{T}^{3}))}\| \phi_{\delta}\|_{L^{\infty}(0,T;H^{\gamma}(\mathbb{T}^{3}))}
					\\&\nonumber \quad+\|\psi\|_{L^{\infty}(0,T)}\|\mathbf{u}_{\delta}\|_{L^{2}(0,T;H^{1}(\mathbb{T}^{3}))}\|\phi_{\delta}\|_{L^{\infty}(0,T;H^{\gamma}(\mathbb{T}^{3}))}
					\\&\lesssim\|\psi\|_{L^{\infty}(0,T)}.
				\end{align}
				
				By using the similar argument we can easily deduce
				\begin{align}
					\label{lemh55-10}
					|J_{5}|\lesssim \|\psi\|_{L^{\infty}(0,T)},\ |J_{6}|\lesssim \|\psi\|_{L^{\infty}(0,T)},\ |J_{7}|\lesssim \frac{1}{\alpha}\|\psi\|_{L^{\infty}(0,T)}.
				\end{align}

				Combining \eqref{lemh55-7}--\eqref{lemh55-10} with \eqref{lemh55-5}, we show
				\begin{align}
					\label{lemh55-121}
					&
					(\Lambda^{s+\frac{\gamma}{2}}\phi_\delta,\psi\Lambda^{s+\frac{\gamma}{2}}\phi_\delta)_{Q_{T}}
					\lesssim \alpha\|\partial_t\psi\|_{L^{1}(0,T)}+\|\psi\|_{L^{\infty}(0,T)}.
				\end{align}
				Choosing  $\psi=\psi_{m}$ where
				\begin{equation}
					\label{cut-off function}
					\begin{aligned}
						& \psi_{m}\in C_{0}^{\infty}(0,T),  0\leq\psi_{m}\leq1,
						\\& \psi_{m}(t)=1, t\in\left[\frac{1}{m},T-\frac{1}{m}\right], |\psi'|\leq 2m,\ m\in \mathbb{N},
					\end{aligned}
				\end{equation}
				and then sending  $ m \to \infty $,  one derives $\|\phi_{\delta}\|_{L^{2}(0,T;H^{s+\frac{\gamma}{2}}(\mathbb{T}^{3}))}\leq C$. The proof is completed.
			\end{proof}

			As a corollary, we have
			\begin{corollary}\label{lem5.7}
				There is a subsequence, still denoted by $(\phi_{\delta})_{0<\delta\leq1}$ such that $\phi_{\delta}\rightarrow\phi$ in $L^{2}(0,T;H^{s}(\mathbb{T}^{3}))$, as $\delta\rightarrow0$.
			\end{corollary}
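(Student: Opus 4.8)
The plan is to upgrade the almost-everywhere convergence provided by Lemma~\ref{lem5.5} to strong convergence in $L^{2}(0,T;H^{s}(\mathbb{T}^{3}))$ by a standard Aubin--Lions--Simon compactness argument; all the uniform bounds it requires are already at our disposal. Concretely, I would fix any $\gamma\in(0,1]$ (say $\gamma=1$). By Lemma~\ref{lem:higher eatimate} the family $(\phi_{\delta})_{0<\delta\leq1}$ is bounded in $L^{2}(0,T;H^{s+\frac{\gamma}{2}}(\mathbb{T}^{3}))$ with a constant independent of $\delta$, while the estimate \eqref{uniform phi-alpha} used in the proof of Lemma~\ref{lem5.5} shows that $\partial_{t}\phi_{\delta}$ is bounded in $L^{2}(0,T;L^{2}(\mathbb{T}^{3}))$, again uniformly in $\delta$ (the constant may depend on the fixed parameter $\alpha$, which is harmless for the limit $\delta\to0$).

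Next I would invoke the compact embedding $H^{s+\frac{\gamma}{2}}(\mathbb{T}^{3})\hookrightarrow\hookrightarrow H^{s}(\mathbb{T}^{3})$ together with the continuous embedding $H^{s}(\mathbb{T}^{3})\hookrightarrow L^{2}(\mathbb{T}^{3})$. The Aubin--Lions--Simon lemma then gives that $\{\phi_{\delta}\}_{0<\delta\leq1}$ is relatively compact in $L^{2}(0,T;H^{s}(\mathbb{T}^{3}))$, so that there exist a subsequence (not relabelled) and some $\widetilde{\phi}\in L^{2}(0,T;H^{s}(\mathbb{T}^{3}))$ with $\phi_{\delta}\to\widetilde{\phi}$ strongly in $L^{2}(0,T;H^{s}(\mathbb{T}^{3}))$ as $\delta\to0$.

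It remains to identify $\widetilde{\phi}$ with $\phi$. Passing, if necessary, to a further subsequence, $\phi_{\delta}\to\widetilde{\phi}$ almost everywhere in $Q_{T}$, so Lemma~\ref{lem5.5} forces $\widetilde{\phi}=\phi$ a.e.\ in $Q_{T}$; this yields the asserted strong convergence $\phi_{\delta}\to\phi$ in $L^{2}(0,T;H^{s}(\mathbb{T}^{3}))$. (Alternatively, once $\phi_{\delta}\rightharpoonup\phi$ weakly in $L^{2}(0,T;H^{s+\frac{\gamma}{2}}(\mathbb{T}^{3}))$ is available from the uniform bound of Lemma~\ref{lem:higher eatimate}, one could reproduce the seminorm-convergence argument used for $\phi^{N}$ in the proof of Theorem~\ref{main theorem2}, i.e.\ prove $\int_{Q_{T}}|\Lambda^{s}\phi_{\delta}|^{2}\,\dxdt\to\int_{Q_{T}}|\Lambda^{s}\phi|^{2}\,\dxdt$ as in \eqref{Strong convergence}, and combine it with weak convergence in $L^{2}(0,T;H^{s}(\mathbb{T}^{3}))$.)

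I do not expect any genuine obstacle in this corollary: the real work — the uniform-in-$\delta$ control of $\phi_{\delta}$ in the stronger space $L^{2}(0,T;H^{s+\frac{\gamma}{2}}(\mathbb{T}^{3}))$, which encodes the half-derivative gain coming from the partial damping of the capillary force — has already been done in Lemma~\ref{lem:higher eatimate}. The only point one must keep in mind is that the $\partial_{t}\phi_{\delta}$ bound does not degenerate as $\delta\to0$, which is immediate from \eqref{uniform phi-alpha}.
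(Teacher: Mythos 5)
Your argument is the same as the paper's: bound $\phi_{\delta}$ in $L^{2}(0,T;H^{s+\gamma/2}(\mathbb{T}^{3}))$ via Lemma~\ref{lem:higher eatimate}, bound $\partial_{t}\phi_{\delta}$ in $L^{2}(Q_{T})$ via \eqref{uniform phi-alpha}, and conclude by Aubin--Lions with the compact embedding $H^{s+\gamma/2}(\mathbb{T}^{3})\hookrightarrow\hookrightarrow H^{s}(\mathbb{T}^{3})$. Your remarks about identifying the limit via Lemma~\ref{lem5.5} and about the $\alpha$-dependence of the time-derivative bound being harmless for the $\delta\to 0$ limit are correct and merely make explicit what the paper leaves implicit.
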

			\begin{proof}
				
				Owing to the facts that $\phi_{\delta}$ is uniform bounded in $L^{2}(0,T;H^{s+\frac{\gamma}{2}}(\mathbb{T}^{3}))$,
				$H^{s+\frac{\gamma}{2}}(\mathbb{T}^{3})\hookrightarrow\hookrightarrow H^{s}(\mathbb{T}^{3})$(cf.P330 in \cite{TM}) and $\partial_{t}\phi_{\delta}$ is uniform bounded in $L^{2}(Q_{T})$(cf.\eqref{uniform phi-alpha}), there is a subsequence, still denoted by $(\phi_{\delta})$, such that
				$\phi_{\delta}\rightarrow\phi$  in $L^{2}(0,T;H^{s}(\mathbb{T}^{3}))$ as $\delta\rightarrow 0$ by Aubin-Lions lemma.
			\end{proof}

\begin{lemma}\label{lem5.2}
	There is a constant $C$ independent of $\delta > 0$ such that
	\begin{align*}
		\|\overline{\mu}_{\delta}\|_{L^{\infty}(0,T)}\leq C.
	\end{align*}
\end{lemma}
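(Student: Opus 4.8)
The plan is to reduce the bound to the elementary observation that $\overline{\mu}_\delta$ is, up to the normalising constant, nothing but the spatial mean of $F'(\phi_\delta)$, and then to control the latter by the uniform $L^\infty$ bound on the order parameter. Recall first that on $\mathbb{T}^3$ the fractional Laplacian kills constants, $\Lambda^{s}c=0$ for $c\in\mathbb{R}$, so in particular $\int_{\mathbb{T}^3}\Lambda^{2s}\phi_\delta\,\dx=0$ because the zeroth Fourier mode of $(-\Delta)^s\phi_\delta$ vanishes. Combining the constitutive relations $\mu_\delta=F'(\phi_\delta)+\Lambda^{2s}\phi_\delta$ and $\mu_{p,\delta}^{0}+\overline{\mu}_\delta=\mu_\delta+\alpha p_{0,\delta}$ with the fact that $\mu_{p,\delta}^0$ and $p_{0,\delta}$ have zero spatial average (equivalently, taking $\psi=\psi(t)$ independent of $x$ in \eqref{weak4} and using $\Lambda^s\psi=0$), one obtains, for a.e. $t\in(0,T)$,
\begin{align*}
	\overline{\mu}_\delta(t)=\overline{F'(\phi_\delta)}(t)=\frac{1}{\absm{\bbt^3}}\int_{\mathbb{T}^3}F'\big(\phi_\delta(x,t)\big)\,\dx .
\end{align*}

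It then remains to bound the right-hand side uniformly in $\delta$, which is immediate. By Assumption \ref{ass:main} we have $F'(\xi)=\Phi'(\xi)-\kappa\xi$ with $\Phi\in C^{3}(\mathbb{R})$, so $F'$ is continuous, hence bounded on compact sets. Moreover $\phi_\delta$ enjoys the uniform-in-$\delta$ pointwise bound $\phi_\delta(x,t)\in(-1-\theta,1+\theta)$ for a.e. $(x,t)\in Q_T$ (this is Lemma \ref{lem2}, already invoked above in the proof of Theorem \ref{main theorem2} and in Lemmas \ref{lem5.1}, \ref{lem:higher eatimate}). Therefore
\begin{align*}
	|\overline{\mu}_\delta(t)|\leq\frac{1}{\absm{\bbt^3}}\int_{\mathbb{T}^3}\big|F'(\phi_\delta)\big|\,\dx\leq\sup_{|\xi|\leq 1+\theta}|F'(\xi)|=:C
\end{align*}
for a.e. $t$, so $\|\overline{\mu}_\delta\|_{L^\infty(0,T)}\leq C$ with $C$ depending only on $F$ and $\theta$, hence independent of $\delta$.

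I do not anticipate any genuine difficulty in this lemma: the only step deserving a line of care is the identification of $\overline{\mu}_\delta$ with the spatial mean of $F'(\phi_\delta)$, i.e. verifying that the $\Lambda^{2s}\phi_\delta$ contribution to the mean of $\mu_\delta$ drops out. If one wished to avoid citing the pointwise bound of Lemma \ref{lem2}, the same conclusion would follow from a uniform-in-$\delta$ bound for $\phi_\delta$ in $L^\infty(0,T;H^s(\mathbb{T}^3))$ and the embedding $H^s(\mathbb{T}^3)\hookrightarrow L^\infty(\mathbb{T}^3)$ valid for $s>\tfrac32$, together with the continuity of $F'$.
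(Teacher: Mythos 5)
Your proposal is correct and follows essentially the same route as the paper's proof: both identify $\overline{\mu}_\delta$ with the spatial mean of $F'(\phi_\delta)$ by using the vanishing mean of $\Lambda^{2s}\phi_\delta$, $\mu_{p,\delta}^0$, and $p_{0,\delta}$ (the paper implements this by testing \eqref{weak4} with a spatially constant $\psi=\eta(t)$), and then both conclude from the uniform $L^\infty$ bound $\phi_\delta\in(-1-\theta,1+\theta)$ furnished by Lemma \ref{lem2}.
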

\begin{proof}
	By choosing $\psi=\eta(t)\in C_{0}^{\infty}(0,T)$ in \eqref{weak4}, we get
	\begin{align*}
		\overline{\mu}_{\delta}(t)=\frac{1}{|\mathbb{T}^{3}|}\int_{\mathbb{T}^{3}}\big(-\mu_{p,\delta}^{0}+\alpha p_{0,\delta}+F'(\phi_{\delta})\big)dx=\frac{1}{|\mathbb{T}^{3}|}\int_{\mathbb{T}^{3}}F'(\phi_{\delta})dx,\ \ a .e. \ \text{in}\ [0,T].
	\end{align*}
	Since  $\|\phi_{\delta}\|_{L^{\infty}(Q_T)}\leq C$, we infer that
	\begin{align*}
		\|\overline{\mu}_{\delta}\|_{L^{\infty}(0,T)}\leq C,
	\end{align*}
	where $C$ does not depend on $\delta$.
\end{proof}

								In the final part of this section we aim to show the proof of  Theorem \ref{thm:main}.
								
								{\bf Proof of Theorem \ref{thm:main}:}
								It follows from  \eqref{model4-5}, \eqref{ConEnL2} and Corollary \ref{lem5.7} that there exists a subsequence $(\delta_n)$ such that $\phi_{\delta_n}\rightarrow\phi$, $\rho_{\delta_n}\rightarrow\rho$ in $L^{2}(0,T;H^{s}(\mathbb{T}^{3}))$ and  $\mathbf{u}_{\delta_n}\rightharpoonup \mathbf{u}$ in $L^{2}(0,T;H^{1}(\mathbb{T}^{3}))$, we conclude that
								$\rho_{\delta_n}\mathbf{u}_{\delta_n}\rightharpoonup \rho\mathbf{u}$ in $L^{1}(0,T;L^{2}(\mathbb{T}^{3}))$ as $n\rightarrow\infty$. Combined with the facts that $\delta_n p_{0,\delta_n}\rightharpoonup 0$ in $L^{2}(Q_{T})$, $\mu_{p,\delta_n}^{0}\rightharpoonup \mu_{p}^{0}$ in $L^{2}(0,T;H^{1}(\mathbb{T}^{3}))$, one can easily get
								\eqref{mweak2}--\eqref{mweak3}  by passing to the limit $n\rightarrow\infty$ in \eqref{weak2}--\eqref{weak3}. Moreover, it is noted that for a.e.~$t \in [0,T]$,  $\int_{\mathbb{T}^{3}}\phi_{\delta}\, \mathrm{d} x=\int_{\mathbb{T}^{3}}\phi_0\, \mathrm{d} x$, then we can derive $\int_{\mathbb{T}^{3}}\phi\, \mathrm{d} x=\int_{\mathbb{T}^{3}}\phi_0\, \mathrm{d} x$ for a.e.~$t \in [0,T]$.

							Submitting \eqref{new pressure} into
							\eqref{weak4} implies
							\begin{align}
								(\zeta\Lambda^{s}\phi_{\delta},\Lambda^{s}\psi)_{Q_{T}}&=(\alpha p_{1,\delta}-\zeta\mu_{p,\delta}^{0}-\zeta\overline{\mu}_{\delta}(t)+\zeta F'(\phi_{\delta}),\psi)_{Q_{T}}
								\nonumber\\&\quad+(\alpha\mathbb{G}(\mathbf{u}_{\delta}),\partial_{t}\psi)_{Q_{T}}.\label{new initial convergence}
							\end{align}
							Thanks  to \eqref{ConEnL2}, Lemma \ref{lem5.1}, Lemma \ref{lem5.3}, Corollary \ref{lem5.7} and Lemma \ref{lem5.2}, there is a subsequence $(\delta_n)$ such that $p_{1,\delta_n}\rightharpoonup p_{1}$ in $L^{2}(0,T;L^{r}(\mathbb{T}^{3}))$($1<r<\frac{3}{2}$),  $F'(\phi_{\delta_n})\rightarrow F'(\phi)$  in $L^{2}(0,T;L^{\infty}(\mathbb{T}^{3}))$, $(\overline{\mu}_{\delta_n},\psi)_{Q_{T}}\rightarrow(\overline{\mu},\psi)_{Q_{T}}$, $\mu_{p,\delta_n}^{0}\rightharpoonup \mu_{p}^{0}$ in $L^{2}(0,T;H^{1}(\mathbb{T}^{3}))$ and $\mathbb{G}(\mathbf{u}_{\delta_n})\rightarrow\mathbb{G}(\mathbf{u})$ in $L^{2}(0,T;H^{1}(\mathbb{T}^{3}))$ as $n\rightarrow \infty$,
							we can show \eqref{mweak4} by passing to the limit $n\rightarrow \infty$  in \eqref{new initial convergence}.
							
							By Lemma \ref{lem5.1}, Lemma \ref{lem5.3}, Lemma \ref{lem5.5} and Lemma \ref{lem5.7}, \eqref{mweak1} and \eqref{eqs:energy} can be  easily verified by passing to the limit $n\rightarrow \infty$  in \eqref{newweak1} and \eqref{ConEnL2} respectively. Taking the advantage of \eqref{model4-6} 
							and \eqref{ConEnL2}, it follows that $\mu_{p}^{0}\in L^{2}(0,T;H^{2})$. 
							
							With the help of \eqref{eqs:dt-u-delta} one has $\partial_{t}\mathbf{u}\in L^{2}(0,T;H^{-1-s}(\mathbb{T}^{3}))$, then $\mathbf{u}\in BC_{\omega}(0,T; L^{2}(\mathbb{T}^{3}))$ holds  from Lemma \ref{lem1} in which $X=H^{-s-1}(\mathbb{T}^{3})$ and $Y=L^2(\mathbb{T}^{3})$. Similarly, $\phi\in BC_{\omega}(0,T; H^{s}(\mathbb{T}^{3}))$ holds  by using Lemma \ref{lem1} in which $X=L^2(\mathbb{T}^{3})$ and $Y=H^s(\mathbb{T}^{3})$.
							
							For any  $\varphi\in C_{0}^{\infty}(\mathbb{T}^{3})$, we write
							\begin{align}
								\int_{\mathbb{T}^{3}}(\mathbf{u}(0)-\mathbf{u}_{0})\varphi\,\dx&=\int_{\mathbb{T}^{3}}(\mathbf{u}(0)-\mathbf{u}(t,x))\varphi\,\dx\nonumber
								\\&\quad+\int_{\mathbb{T}^{3}}(\mathbf{u}(t,x)-\mathbf{u}_{\delta}(t,x))\varphi\,\dx\nonumber
								\\&\quad+\int_{\mathbb{T}^{3}}(\mathbf{u}_{\delta}(t,x)-\mathbf{u}_{0})\varphi\,\dx.
								\label{weak continuity0}
							\end{align}
							Due to $\mathbf{u}\in BC_{w}(0,T;L^{2}(\mathbb{T}^{3}))$, $\mathbf{u}_{\delta}\in BC_{w}(0,T;L^{2}(\mathbb{T}^{3}))$, \eqref{appinitial data-1} and  Lemma \ref{lem5.3} one has
							\begin{align*}
								&\int_{\mathbb{T}^{3}}(\mathbf{u}(0)-\mathbf{u}_{0})\varphi\,\dx=0,\ \forall \varphi\in C_{0}^{\infty}(\mathbb{T}^{3}),
							\end{align*}
							which leads to \eqref{initial continuty1}. Similarly, one can get \eqref{initial continuty2}.
							
							Therefore we complete the proof of Theorem \ref{thm:main}.

							\section{Incompressibility Limit}
							\label{sec:limit}
							\subsection{Formal argument and main result}
							
							In this section, we investigate the limit passage of the system of \eqref{model2} as $\alpha\rightarrow0$. Let $(\mathbf{u}_{\alpha},\phi_{\alpha}, \mu_{p,\alpha},p_{\alpha})$ be the solution of \eqref{model2}. For convenience, we rewrite the system in $Q_{T}$ as follows
							\begin{subequations}
								\label{mode54}
								\begin{align}
									\label{mode54-1}
									\partial_t(\rho_{\alpha}  \mathbf{u}_{\alpha}) + \mathrm{div}\left (\rho_{\alpha}  \mathbf{u}_{\alpha}\otimes
									\mathbf{u}_{\alpha}\right) 
									-\mathrm{div} \big(S(\phi_{\alpha},\mathbb{ D}\mathbf{u}_{\alpha})\big)
									& = - \nabla p_{\alpha}
									- \phi_{\alpha} \nabla \mu_{p,\alpha}
									+ \alpha\phi_{\alpha}\nabla p_{\alpha}, 
									\\
									\label{mode54-6}
									\mathrm{div}  \mathbf{u}_{\alpha} &= \alpha \Delta\mu_{p,\alpha}, \\
									\label{mode54-3}
									\partial_t\phi_{\alpha} + \mathrm{div}(\phi_{\alpha}  \mathbf{u}_{\alpha})&=\Delta\mu_{p,\alpha}, \\
									\label{mode54-4}
									\mu_{\alpha} &=F'(\phi_{\alpha})+\Lambda^{2s} \phi_{\alpha}, \\
									\label{mode54-5}
									\mu_{p,\alpha}&=\mu_{\alpha}+\alpha p_{\alpha},\\
									\rho_{\alpha}&=\frac{\varepsilon}{2}\phi_{\alpha}+\frac{\varepsilon}{2}+1,
								\end{align}
								and initial data
								\begin{align}
									\label{model54-7}\mathbf{u}_{\alpha}|_{t=0}=\mathbf{u}^{\alpha}_{0},\ \phi_{\alpha}|_{t=0}=\phi^{\alpha}_{0}, \quad\text{in $\bbt^3$}.
								\end{align}
							\end{subequations}
							Moreover, the continuity equation in $Q_{T}$ reads
							\begin{align}
								\label{mode54-2}
								\partial_t \rho_{\alpha}+\mathrm{div} (\rho_{\alpha} \mathbf{u}_{\alpha})=0.
							\end{align}

							The energy inequality now reads
							\begin{align}\label{que}
								&\nonumber E(\phi_{\alpha}(t),\mathbf{u}_{\alpha}(t))+\int_0^{t}\int_{\mathbb{T}^{3}} \left( 2 \eta(\phi_{\alpha})
								\mathbb{ D}(\mathbf{u}_{\alpha}):\mathbb{ D}(\mathbf{u}_{\alpha})-\frac{2}{3}\eta(\phi_{\alpha}) (\mathrm{div} \mathbf{u}_{\alpha})^2\right)\, \dx\dtau
								\\&\qquad
								+\int_0^{t}\int_{\mathbb{T}^{3}}|\nabla \mu_{p,\alpha}^{0}|^{2}\, \dx\dtau
								\leq E(\phi_{0}^{\alpha},\mathbf{u}_{0}^{\alpha})
							\end{align}
							for a.e.~$t \in (0,T)$.

							Sending $\alpha\rightarrow0$, one formally gets $\mathrm{div} \mathbf{u}_{\alpha}|_{\alpha \to 0}=0$ and a modified version of the celebrated \textit{model H} \cite{HH}, with the solution denoted by $(\mathbf{u},\phi, \mu_{p},p)$ satisfying
							\begin{subequations}
								\label{eqs:ModelH}
								\begin{align}
									\partial_t  \mathbf{u} +\mathbf{u}\cdot\nabla \mathbf{u}-\mathrm{div}(2\eta(\phi)\mathbb{ D}\mathbf{u}) &=-\nabla p-\phi \nabla \mu, \\
									\mathrm{div} \mathbf{u}&= 0  \label{imcompressible},\\
									\partial_t\phi + \mathbf{u}\cdot\nabla \phi &=\Delta\mu \label{no density},\\
									\mu& =F'(\phi)+\Lambda^{2s} \phi.
								\end{align}
							\end{subequations}
							
							The system \eqref{eqs:ModelH} with $s=1$ has been investigated extensively over the past decades. It is known that the strong solution of 3D Navier--Stokes/Cahn--Hilliard equations exists only locally in time \cite{AbelsARMA2009,GMT}. Here we assume the life span of the strong solution is $(0,T')$ for some $T' > 0$. Moreover, for simplicity, we assume
							\begin{align}
								\label{eqs:reg-assump}
								\mathbf{u}\in H^{1}(0,T';H^{s+\frac{1}{2}}(\mathbb{T}^{3})), p\in L^{2}(0,T';H^{1}(\mathbb{T}^{3})), \mu\in H^{1}(Q_{T'}),  \phi\in H^2(0,T';H^{s}(\mathbb{T}^{3})).
							\end{align}
							Note that here we are not devoted to finding the optimal regularity assumption of the strong solution. Instead, we would like to perform the relative entropy argument with some suitably nice strong solution.
							
							\begin{theorem}
								\label{thm:limit}
								Let $(\mathbf{u}_{\alpha},\phi_{\alpha}, \mu_{p,\alpha},p_{\alpha})$ be a weak solution to \eqref{mode54}, i.e. \eqref{model2}, constructed in Theorem \ref{thm:main}. Assume that the initial data $(\bu_0^\alpha,\phi_0^\alpha) \in H^1(\bbt^3) \times H^{s}(\bbt^3)$ is well-prepared, i.e., there hold
								$\int_{\bbt^3} \phi_0^\alpha \,\dx = \int_{\bbt^3} \phi_0 \,\dx = const.$ and as $\alpha\rightarrow 0$ 
								\begin{align}
									\bu_0^\alpha & \to \bv_0 \text{ in } L^2(\bbt^3),\label{wp-1} \\
									\phi_0^\alpha & \to \psi_0 \text{ in } H^s(\bbt^3).\label{wp-2}
								\end{align}
								Let $(\mathbf{u},\phi, \mu,p)$ be the strong solution of \eqref{eqs:ModelH} defined on $[0,T']$ subjected to the initial data $(\mathbf{v}_{0},\phi^{0})$ fulfilling the regularity \eqref{eqs:reg-assump}. Then it follows
								\begin{align}
									&\nonumber \sup_{0\leq \tau \leq T'}E(\phi_{\alpha},\mathbf{u}_{\alpha}|\phi,\mathbf{u})(\tau)
									+  \frac{ C_1}{2} \| \mathbf{u}_{\alpha}-\mathbf{u}\|_{L^2(0,\tau;H^{1}(\mathbb{T}^{3}))}^{2}
									\\&
									\nonumber\qquad +\frac{1}{2}\int_0^{T'}\int_{\mathbb{T}^{3}}|\nabla \mu_{p,\alpha}-\nabla \mu|^{2}\,\dx\dt
									\\&\quad
									\leq C(T',D)\bigg(\alpha+ E(\phi_{\alpha},\mathbf{u}_{\alpha}|\phi,\mathbf{u})(0)\bigg),
									\label{th5.1}
								\end{align}
								where $C(T',D)$ is a positive constant depending on $T'$ and $D$ which is defined by
								\begin{align}
									D&\coloneqq E(\phi_{0}^{\alpha},\mathbf{u}_{0}^{\alpha})+\| \mathbf{u}\|_{H^{1}(0,T';H^{s+\frac{1}{2}}(\mathbb{T}^{3}))}+\| p\|_{L^{2}(0,T';H^{1}(\mathbb{T}^{3}))}\nonumber
									\\&\quad+\|\mu\|_{H^{1}(Q_{T'})}+\|\phi\|_{H^2(0,T';H^{s}(\mathbb{T}^{3}))}.\label{def D}
								\end{align}
								Moreover, the relative energy is defined by
								\begin{align}
									\nonumber E(\phi_{\alpha},\mathbf{u}_{\alpha}|\phi,\mathbf{u})(\tau)&\coloneqq\int_{\mathbb{T}^{3}}\frac{1}{2}\rho_{\alpha}|\mathbf{u}-\mathbf{u}_{\alpha}|^{2}(\tau)\,\dx
									+\int_{\mathbb{T}^{3}}\frac{1}{2}|\Lambda^{s}\phi-\Lambda^{s}\phi_{\alpha}|^{2}(\tau)\,\dx
									\\&\label{54-17}
									\quad +\int_{\mathbb{T}^{3}}(\Phi(\phi_{\alpha})-\Phi'(\phi)(\phi_{\alpha}-\phi)-\Phi(\phi))(\tau)\,\dx,
								\end{align}
								where $\Phi$ is the convex part of $F$ by Assumption \ref{ass:main}.
							\end{theorem}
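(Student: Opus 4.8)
The plan is to run a \emph{relative entropy} (modulated energy) argument, comparing the weak solution $(\mathbf{u}_{\alpha},\phi_{\alpha},\mu_{p,\alpha},p_{\alpha})$ of \eqref{mode54} with the strong solution $(\mathbf{u},\phi,\mu,p)$ of model~H \eqref{eqs:ModelH}. The object to control is $E(\phi_{\alpha},\mathbf{u}_{\alpha}|\phi,\mathbf{u})$ from \eqref{54-17}, which splits into the kinetic part $\tfrac12\int_{\mathbb{T}^3}\rho_{\alpha}|\mathbf{u}-\mathbf{u}_{\alpha}|^2$, the capillary part $\tfrac12\int_{\mathbb{T}^3}|\Lambda^{s}\phi-\Lambda^{s}\phi_{\alpha}|^2$, and the Bregman divergence $\int_{\mathbb{T}^3}(\Phi(\phi_{\alpha})-\Phi'(\phi)(\phi_{\alpha}-\phi)-\Phi(\phi))\ge 0$ of the convex part $\Phi$ of $F$, whose nonnegativity is guaranteed by Assumption \ref{ass:main}. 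First I would combine the energy inequality \eqref{que} for the weak solution with the energy identity satisfied by the strong solution (admissible thanks to \eqref{eqs:reg-assump}), and subtract the mixed terms produced by testing \eqref{mweak1}--\eqref{mweak4} with the strong solution (mollified in time if needed) and, symmetrically, \eqref{eqs:ModelH} with the weak solution. This yields an integrated-in-time inequality
\[
E(\phi_{\alpha},\mathbf{u}_{\alpha}|\phi,\mathbf{u})(\tau)+\int_0^{\tau}\mathcal{D}_{\alpha}\le E(\phi_{\alpha},\mathbf{u}_{\alpha}|\phi,\mathbf{u})(0)+\int_0^{\tau}\mathcal{R}_{\alpha},
\]
where $\mathcal{D}_{\alpha}$ contains the coercive dissipation $2\eta(\phi_{\alpha})\mathbb{D}(\mathbf{u}_{\alpha}-\mathbf{u}):\mathbb{D}(\mathbf{u}_{\alpha}-\mathbf{u})-\tfrac23\eta(\phi_{\alpha})(\mathrm{div}(\mathbf{u}_{\alpha}-\mathbf{u}))^{2}$ and $|\nabla\mu_{p,\alpha}-\nabla\mu|^{2}$ (the latter emerging after using $\mathrm{div}\,\mathbf{u}=0$ and \eqref{no density}), and $\mathcal{R}_{\alpha}$ is a sum of remainders.

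The next step is to sort $\mathcal{R}_{\alpha}$. The ``easy'' remainders are of two kinds: (i) transport/viscous mismatches of the schematic form $\int\rho_{\alpha}(\mathbf{u}_{\alpha}-\mathbf{u})\cdot\nabla\mathbf{u}\cdot(\mathbf{u}_{\alpha}-\mathbf{u})$ and $\int(\eta(\phi_{\alpha})-\eta(\phi))\mathbb{D}\mathbf{u}:\mathbb{D}(\mathbf{u}_{\alpha}-\mathbf{u})$, which are handled by H\"older, Sobolev embedding, Korn's inequality and $\eta\in W^{1,\infty}$, and absorbed into $\tfrac{C_1}{2}\|\mathbf{u}_{\alpha}-\mathbf{u}\|_{H^1}^{2}$ plus $\|\nabla\mathbf{u}\|$ times the relative entropy; and (ii) Cahn--Hilliard/capillary mismatches coupling $\Lambda^{s}(\phi_{\alpha}-\phi)$, $F'(\phi_{\alpha})-F'(\phi)-F''(\phi)(\phi_{\alpha}-\phi)$ and $\phi_{\alpha}\nabla\mu_{p,\alpha}-\phi\nabla\mu$, controlled by the Bregman term, the capillary term and $\tfrac12\|\nabla\mu_{p,\alpha}-\nabla\mu\|_{L^2}^{2}$. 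Here it is useful that $\rho_{\alpha}-1=\tfrac{\varepsilon}{2}(\phi_{\alpha}+1)=O(\alpha)$, so the density discrepancies are themselves $O(\alpha)$.

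The hard part will be the remaining group (iii): the terms carrying explicit factors of $\alpha$ or of the pressure $p_{\alpha}$, namely $\alpha\phi_{\alpha}\nabla p_{\alpha}$, $\nabla p_{\alpha}$ tested against the non-solenoidal difference field, and the contribution of $\alpha p_{\alpha}$ hidden inside $\mu_{p,\alpha}=\mu_{\alpha}+\alpha p_{\alpha}$. Since the pressure is not uniformly bounded in any $L^{q}(Q_T)$, these cannot be estimated directly as in the compressible setting. The way around is the structural identity $\alpha p_{\alpha}=\mu_{p,\alpha}-\mu_{\alpha}=\mu_{p,\alpha}-F'(\phi_{\alpha})-\Lambda^{2s}\phi_{\alpha}$ together with the improved regularity $\phi_{\alpha}\in L^{2}(0,T;H^{s+\gamma/2}(\mathbb{T}^{3}))$ from Lemma \ref{lem:higher eatimate} (equivalently \eqref{eqs:phi-high}) and the $\alpha$-uniform pressure controls of Lemmas \ref{lem:limit 1} and \ref{lem:limit 2}, which bound $\alpha p_{\alpha}$ (and hence, via \eqref{mode54-6}, the defect $\mathrm{div}\,\mathbf{u}_{\alpha}=\alpha\Delta\mu_{p,\alpha}$) in suitable negative-order/weighted norms. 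Integrating by parts, $\alpha\int\phi_{\alpha}\nabla p_{\alpha}\cdot(\mathbf{u}_{\alpha}-\mathbf{u})=-\alpha\int p_{\alpha}\nabla\phi_{\alpha}\cdot(\mathbf{u}_{\alpha}-\mathbf{u})-\alpha\int p_{\alpha}\phi_{\alpha}\,\mathrm{div}(\mathbf{u}_{\alpha}-\mathbf{u})$, and using $\nabla\phi_{\alpha}\in L^{\infty}(L^{3})$ (since $H^{s}\hookrightarrow W^{1,3}$, $s>\tfrac32$), the bound on $\phi_{\alpha}$, and the dissipation control of $\mathrm{div}\,\mathbf{u}_{\alpha}$, each such term is $\le C(T',D)(\alpha+E(\phi_{\alpha},\mathbf{u}_{\alpha}|\phi,\mathbf{u}))+\tfrac{C_1}{8}\|\mathbf{u}_{\alpha}-\mathbf{u}\|_{H^1}^{2}$; similarly the $\alpha p_{\alpha}$ contribution inside $\mu_{p,\alpha}$, when paired with $\Delta(\mu_{p,\alpha}-\mu)$ or $\nabla(\phi_{\alpha}-\phi)$, is absorbed into $\tfrac12\|\nabla\mu_{p,\alpha}-\nabla\mu\|_{L^2}^{2}$ up to a term $C(T',D)\alpha$.

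Collecting all the estimates yields, for a.e.\ $\tau\in(0,T')$,
\[
E(\phi_{\alpha},\mathbf{u}_{\alpha}|\phi,\mathbf{u})(\tau)+\tfrac{C_1}{2}\|\mathbf{u}_{\alpha}-\mathbf{u}\|_{L^{2}(0,\tau;H^{1})}^{2}+\tfrac12\int_0^{\tau}\!\!\int_{\mathbb{T}^3}|\nabla\mu_{p,\alpha}-\nabla\mu|^{2}\le C(T',D)\Big(\alpha+E(\phi_{\alpha},\mathbf{u}_{\alpha}|\phi,\mathbf{u})(0)+\int_0^{\tau}g\,E(\phi_{\alpha},\mathbf{u}_{\alpha}|\phi,\mathbf{u})\Big),
\]
with $g\in L^{1}(0,T')$ assembled from $\|\nabla\mathbf{u}\|$, $\|\mu\|_{H^1}$, $\|\phi\|_{H^2(H^s)}$ and similar quantities, all finite by \eqref{eqs:reg-assump} and the definition \eqref{def D} of $D$. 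Grönwall's inequality then closes the estimate and gives \eqref{th5.1}; the well-preparedness \eqref{wp-1}--\eqref{wp-2} ensures $E(\phi_{\alpha},\mathbf{u}_{\alpha}|\phi,\mathbf{u})(0)\to0$, so the right-hand side is $o(1)$ as $\alpha\to0$, which is the claimed convergence with rate. One technical point, to be handled as in \cite{FPP,ALN2024}, is that the weak solution only obeys the \emph{inequality} \eqref{que}, so the identity for the relative energy must be set up in integrated form using admissible temporal test functions and the weak continuities $\mathbf{u}_{\alpha}\in BC_{\omega}(0,T;L^{2})$, $\phi_{\alpha}\in BC_{\omega}(0,T;H^{s})$.
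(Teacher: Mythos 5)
Your high-level plan---relative entropy against the strong solution, sorting the remainders into transport/viscous, Cahn--Hilliard, and pressure-carrying groups, invoking the improved regularity \eqref{eqs:phi-high} and the uniform-in-$\alpha$ pairing Lemmas \ref{lem:limit 1}--\ref{lem:limit 2}, then closing by Gr\"onwall---matches the paper's structure (Step~1: derive \eqref{mode54-2231}; Step~2: bound $H_1,\dots,H_{12}$). The gap is in group~(iii). You integrate by parts
\[
\alpha\int\phi_\alpha\nabla p_\alpha\cdot(\mathbf{u}_\alpha-\mathbf{u})
=-\alpha\int p_\alpha\nabla\phi_\alpha\cdot(\mathbf{u}_\alpha-\mathbf{u})
-\alpha\int p_\alpha\phi_\alpha\,\mathrm{div}(\mathbf{u}_\alpha-\mathbf{u}),
\]
and then assert that each piece is $\le C(T',D)(\alpha+E(\phi_\alpha,\mathbf{u}_\alpha|\phi,\mathbf{u}))+\tfrac{C_1}{8}\|\mathbf{u}_\alpha-\mathbf{u}\|_{H^1}^2$. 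That step cannot be justified with the tools at hand: there is \emph{no} uniform-in-$\alpha$ bound on $p_\alpha$ in any $L^q(Q_{T'})$, and Lemma~\ref{lem:limit 2} only controls the pairing $\int p_\alpha\nabla\phi_\alpha\cdot\mathbf{g}$ when $\mathbf{g}$ is divergence-free and lies in $H^1(0,T';H^s(\mathbb{T}^3))$. The difference field $\mathbf{u}_\alpha-\mathbf{u}$ is neither solenoidal (since $\mathrm{div}\,\mathbf{u}_\alpha=\alpha\Delta\mu_{p,\alpha}\neq 0$) nor in $H^1_tH^s_x$; it is only $L^2(H^1)\cap L^\infty(L^2)$. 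Likewise, substituting $\alpha p_\alpha=\mu_{p,\alpha}-F'(\phi_\alpha)-\Lambda^{2s}\phi_\alpha$ does not help when the test quantity also comes from the weak solution: $\Lambda^{2s}\phi_\alpha$ lives only in $L^2_tH^{\gamma/2-s}_x$, so pairing it against $\nabla\phi_\alpha\cdot(\mathbf{u}_\alpha-\mathbf{u})$, $\phi_\alpha\,\mathrm{div}(\mathbf{u}_\alpha-\mathbf{u})$, or $\Delta(\mu_{p,\alpha}-\mu)$ is not controlled by \eqref{eqs:phi-high}.

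What makes the argument go through in the paper is that the relative energy inequality is \emph{assembled so that $p_\alpha$ is only ever tested against quantities built from the strong solution}. Concretely, the term $\alpha\phi_\alpha\nabla p_\alpha$ in \eqref{mode54-1} is tested against $\mathbf{u}$ (not $\mathbf{u}_\alpha-\mathbf{u}$), producing $H_4=-\alpha\int\phi_\alpha\nabla p_\alpha\cdot\mathbf{u}$; since $\mathrm{div}\,\mathbf{u}=0$, integration by parts gives $H_4=\alpha\int p_\alpha\nabla\phi_\alpha\cdot\mathbf{u}$, and Lemma~\ref{lem:limit 2} with $\mathbf{g}=\mathbf{u}\in H^1(0,T';H^{s+\frac12})$ yields $|H_4|\le C(T',D)\alpha$. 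The other pressure remainder coming from $\alpha p_\alpha$ inside $\mu_{p,\alpha}$ is $H_{10}=\alpha\int\partial_t\phi\,p_\alpha$, handled by Lemma~\ref{lem:limit 1} with $f=\partial_t\phi$. Finally, your sketch omits Lemma~\ref{lem:limit 3}: it is needed to bound the piece $\int_0^\tau\int_{\mathbb{T}^3}\rho_\alpha\mathbf{u}_\alpha\cdot\nabla p$ inside $H_1$ (one cannot simply use $\rho_\alpha-1=O(\alpha)$ there, because the leading part $\int\mathbf{u}_\alpha\cdot\nabla p$ is not small unless one uses the continuity equation via Lemma~\ref{lem:limit 3}). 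You should rearrange your derivation so that the pressure-carrying terms $H_4$ and $H_{10}$ appear paired with $\mathbf{u}$ and $\partial_t\phi$ respectively, and add the estimate via Lemma~\ref{lem:limit 3}; the rest of your outline then closes as you describe.
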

							\begin{remark}
								As an application of the relative entropy method, a significant advantage is that the relative energy \eqref{54-17} provides the control of the certain distance of the weak and strong solutions by the coercive property of the functional we constructed. In particular, we have
								\begin{align*}
									E(\phi_{\alpha},\mathbf{u}_{\alpha}|\phi,\mathbf{u}) \gtrsim \norm{\bu-\bu_\alpha}_{L^2(\bbt^3)}^2
									+ \norm{\phi-\phi_\alpha}_{H^s(\bbt^3)}^2,
								\end{align*}
								due to $\Phi'' \geq 0$.
								As a consequence, it implies the convergence with the rate $\alpha$  when the initial data is well-prepared.
							\end{remark}

							\subsection{Further uniform-in-\texorpdfstring{$\alpha$}{α} controls}
							The following two lemmas are essential when passing to the limit of $ \alpha \to 0 $, as we do not have any uniform bounds of the pressure individually.
							\begin{lemma}\label{lem:limit 1}
								For $s>\frac{3}{2}$ and any  $f\in H^{1}(0,T';H^{s}(\mathbb{T}^{3}))$, there is a constant $C=C(f) > 0$  independent of  $\alpha$ such that
								\begin{align}
									\int_0^{T'}\int_{\mathbb{T}^{3}}p_{\alpha}f\,\dx\dt\leq C.\label{new pp est}
								\end{align}
							\end{lemma}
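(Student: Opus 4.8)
The plan is to recover $\int_{Q_{T'}}p_\alpha f$ from the momentum equation by ``testing out'' the pressure and then estimating every remaining term uniformly in $\alpha$. Since the pressure is defined up to a spatial constant we normalize $p_\alpha$ to have zero spatial mean (as for $p_0$ in \eqref{model3-5}); writing $\tilde f:=f-\overline f$ we then have $\int_{\mathbb{T}^{3}}p_\alpha f\,\dx=\int_{\mathbb{T}^{3}}p_\alpha\tilde f\,\dx$ for a.e.\ $t$. First I would set $\psi:=\Delta^{-1}\tilde f$, the mean-free solution of $\Delta\psi=\tilde f$ on $\mathbb{T}^{3}$. Since $s>\tfrac{3}{2}$, elliptic regularity and the embedding $H^{s}\hookrightarrow L^{\infty}$ give, with all norms controlled by $\|f\|_{H^{1}(0,T';H^{s}(\mathbb{T}^{3}))}$,
\[
\psi\in H^{1}(0,T';H^{s+2}(\mathbb{T}^{3})),\quad \nabla\psi,\ \nabla^{2}\psi\in C([0,T'];H^{s}(\mathbb{T}^{3}))\hookrightarrow L^{\infty}(Q_{T'}),\quad \partial_t\nabla\psi\in L^{2}(Q_{T'}).
\]

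Next I would use $\nabla\psi$ as a test field in the momentum equation \eqref{mode54-1} (rigorously: a time-cutoff of $\nabla\psi$ in the weak formulation \eqref{weak1}, then $\delta\to0$; the endpoint corrections and vanishing $\delta$-terms are routine). Since $-\int_{\mathbb{T}^{3}}\nabla p_\alpha\cdot\nabla\psi\,\dx=\int_{\mathbb{T}^{3}}p_\alpha\Delta\psi\,\dx=\int_{\mathbb{T}^{3}}p_\alpha f\,\dx$, integrating by parts in $x$ and $t$ gives
\begin{align*}
\int_0^{T'}\!\!\int_{\mathbb{T}^{3}}p_\alpha f\,\dxdt
&=\Big[\int_{\mathbb{T}^{3}}\rho_\alpha\mathbf{u}_\alpha\cdot\nabla\psi\,\dx\Big]_{0}^{T'}
-\int_0^{T'}\!\!\int_{\mathbb{T}^{3}}\rho_\alpha\mathbf{u}_\alpha\cdot\partial_t\nabla\psi\,\dxdt
-\int_0^{T'}\!\!\int_{\mathbb{T}^{3}}(\rho_\alpha\mathbf{u}_\alpha\otimes\mathbf{u}_\alpha):\nabla^{2}\psi\,\dxdt\\
&\quad+\int_0^{T'}\!\!\int_{\mathbb{T}^{3}}S(\phi_\alpha,\mathbb{ D}\mathbf{u}_\alpha):\nabla^{2}\psi\,\dxdt
+\int_0^{T'}\!\!\int_{\mathbb{T}^{3}}\phi_\alpha\nabla\mu_{p,\alpha}\cdot\nabla\psi\,\dxdt
-\alpha\int_0^{T'}\!\!\int_{\mathbb{T}^{3}}\phi_\alpha\nabla p_\alpha\cdot\nabla\psi\,\dxdt.
\end{align*}
The first five terms are bounded independently of $\alpha$ using only the $\alpha$-uniform energy bounds from \eqref{que} (and Korn's inequality): $\mathbf{u}_\alpha\in L^{\infty}(0,T';L^{2})\cap L^{2}(0,T';H^{1})$, $\mathbb{D}\mathbf{u}_\alpha,\ \mathrm{div}\mathbf{u}_\alpha,\ \nabla\mu_{p,\alpha}\in L^{2}(Q_{T'})$, $\phi_\alpha\in L^{\infty}(0,T';H^{s})$ with $\|\phi_\alpha\|_{L^{\infty}(Q_{T'})}\le1+\theta$, and $\mathbf{u}_\alpha\in BC_{\omega}(0,T';L^{2})$ for the endpoint term; e.g.\ $|\int_0^{T'}\!\int_{\mathbb{T}^{3}}(\rho_\alpha\mathbf{u}_\alpha\otimes\mathbf{u}_\alpha):\nabla^{2}\psi\,\dxdt|\lesssim T'\,\|\mathbf{u}_\alpha\|_{L^{\infty}(0,T';L^{2})}^{2}\,\|\nabla^{2}\psi\|_{L^{\infty}(Q_{T'})}$, and the others are handled the same way.

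The crux is the last term. There I would integrate by parts in space, $-\alpha\int\phi_\alpha\nabla p_\alpha\cdot\nabla\psi=\alpha\int p_\alpha\,\mathrm{div}(\phi_\alpha\nabla\psi)$, and then substitute the chemical-potential relations \eqref{mode54-4}--\eqref{mode54-5}, i.e.\ $\alpha p_\alpha=\mu_{p,\alpha}-F'(\phi_\alpha)-\Lambda^{2s}\phi_\alpha$, which removes the dangerous prefactor $1/\alpha$:
\[
-\alpha\int_0^{T'}\!\!\int_{\mathbb{T}^{3}}\phi_\alpha\nabla p_\alpha\cdot\nabla\psi\,\dxdt
=\int_0^{T'}\!\!\int_{\mathbb{T}^{3}}\big(\mu_{p,\alpha}-F'(\phi_\alpha)-\Lambda^{2s}\phi_\alpha\big)\,g_\alpha\,\dxdt,\qquad g_\alpha:=\nabla\phi_\alpha\cdot\nabla\psi+\phi_\alpha\Delta\psi.
\]
The $\mu_{p,\alpha}$-contribution is $\le\|\mu_{p,\alpha}\|_{L^{2}(0,T';H^{1})}\|g_\alpha\|_{L^{2}(Q_{T'})}$, which is $\alpha$-uniform since the zero-mean normalization forces $\overline{\mu_{p,\alpha}}=\overline{F'(\phi_\alpha)}$ to be bounded while $\nabla\mu_{p,\alpha}\in L^{2}(Q_{T'})$ uniformly; the $F'(\phi_\alpha)$-contribution is $\le\|F'(\phi_\alpha)\|_{L^{\infty}(Q_{T'})}\|g_\alpha\|_{L^{1}(Q_{T'})}$, again uniform. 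For the critical term $\int\Lambda^{2s}\phi_\alpha\,g_\alpha$ I would invoke Lemma \ref{lem:higher eatimate}: choosing $\gamma=1$ gives $\|\phi_\alpha\|_{L^{2}(0,T';H^{s+1/2}(\mathbb{T}^{3}))}\le C$ uniformly in $\alpha$, hence $\Lambda^{2s}\phi_\alpha$ is $\alpha$-uniformly bounded in $L^{2}(0,T';H^{1/2-s}(\mathbb{T}^{3}))=L^{2}(0,T';(H^{s-1/2})')$, whereas $g_\alpha=\mathrm{div}(\phi_\alpha\nabla\psi)\in L^{2}(0,T';H^{s-1/2}(\mathbb{T}^{3}))$ uniformly (product estimates for $s>\tfrac{3}{2}$, using $\phi_\alpha\in L^{2}(0,T';H^{s+1/2})\cap L^{\infty}(0,T';H^{s})$ and $\nabla\psi,\Delta\psi\in C([0,T'];H^{s})$); so the duality pairing is $\le\|\phi_\alpha\|_{L^{2}(0,T';H^{s+1/2})}\|g_\alpha\|_{L^{2}(0,T';H^{s-1/2})}\le C(f)$. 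Collecting all the bounds proves the lemma. The main obstacle is precisely this last term: $p_\alpha$ carries no $\alpha$-uniform bound of its own, and without the higher-order estimate of $\phi_\alpha$ the factor $\Lambda^{2s}\phi_\alpha$ would only lie in a space too rough to be paired with $\mathrm{div}(\phi_\alpha\nabla\psi)$.
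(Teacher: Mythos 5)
Your argument is correct, but it follows a different route than the paper's, and the two differ in an instructive way. The paper first divides the momentum equation by $\rho_\alpha$ (using the continuity equation to rewrite $\partial_t(\rho_\alpha\bu_\alpha)+\Div(\rho_\alpha\bu_\alpha\otimes\bu_\alpha)=\rho_\alpha(\partial_t\bu_\alpha+\bu_\alpha\cdot\nabla\bu_\alpha)$), and uses the algebraic identity $\zeta\rho_\alpha=1-\alpha\phi_\alpha$ to collapse $-\rho_\alpha^{-1}\nabla p_\alpha+\alpha\rho_\alpha^{-1}\phi_\alpha\nabla p_\alpha$ into a \emph{single} pressure term $-\zeta\nabla p_\alpha$ with constant coefficient $\zeta=1+\alpha\in[1,2]$. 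Testing that velocity-form equation with $\psi\nabla\Delta^{-1}f$ and a time cutoff $\psi=\psi_m$ therefore produces $\zeta\int p_\alpha\psi_m f$ on the nose, with no leftover $\alpha\phi_\alpha\nabla p_\alpha$; all remainders $I_1,\dots,I_6$ are controlled by the basic energy bounds from \eqref{que} alone, and no higher-order estimate of $\phi_\alpha$ is required. You instead work from the momentum form \eqref{mode54-1}, which leaves the residual $-\alpha\int\phi_\alpha\nabla p_\alpha\cdot\nabla\psi$; to treat it you integrate by parts, substitute $\alpha p_\alpha=\mu_{p,\alpha}-F'(\phi_\alpha)-\Lambda^{2s}\phi_\alpha$, and then need the uniform bound $\phi_\alpha\in L^2(0,T';H^{s+1/2})$ from Lemma \ref{lem:higher eatimate} to pair $\Lambda^{2s}\phi_\alpha$ with $\Div(\phi_\alpha\nabla\psi)$. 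This is a sound and self-contained argument — and it is essentially the same mechanism the paper deploys for Lemma \ref{lem:limit 2}, where the $\nabla\phi_\alpha$-weighted test really does leave such a term — but for Lemma \ref{lem:limit 1} the paper's $\rho_\alpha^{-1}$-trick is cheaper: it sidesteps the dependence on the improved $\phi_\alpha$-regularity and makes the pressure extraction trivial. Both proofs conclude by sending the cutoff $\psi_m\to 1$; note that the paper justifies this limit via the explicit identity $\alpha p_\alpha=\mu_{p,\alpha}-F'(\phi_\alpha)-\Lambda^{2s}\phi_\alpha$ and dominated convergence (see \eqref{eqs:alpha-p-psi-f}), a point you compress into \emph{``endpoint corrections are routine''} but which deserves the same care, since no $\alpha$-uniform $L^q$ bound on $p_\alpha$ is available.
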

							\begin{proof}
								Given a  weak solution $ (\bu_{\alpha},\phi_{\alpha},\mu_{p,\alpha},p_{\alpha})$ and utilizing \eqref{mode54-2} we have
								\begin{align}
									\label{lem55-1}
									& \partial_t  \mathbf{u}_{\alpha} +   \mathbf{u}_{\alpha}\cdot\nabla
									\mathbf{u}_{\alpha}-  \frac{1}{\rho_{\alpha}}\mathrm{div}\big(S(\phi_{\alpha},\mathbb{ D}\mathbf{u}_{\alpha})\big)
									+\zeta\nabla p_{\alpha}+\frac{\phi_{\alpha}}{\rho_{\alpha}} \nabla \mu_{p,\alpha}=0, \quad\text{in $Q_{T}$}.
								\end{align}
								Analogous to the proof of Lemma \ref{lem:higher eatimate}, we  take $\psi\nabla\Delta^{-1}f$ with $\psi\in C_{0}^{\infty}(0,T')$ as the test function in \eqref{lem55-1} and integrate by parts, then  get
							\begin{align}
								\label{lem55-3}
								\nonumber\zeta (p_{\alpha},\psi f)_{Q_{T'}}& =-(\mathbf{u}_{\alpha},\psi'\nabla\Delta^{-1}f)_{Q_{T'}}-(  \mathbf{u}_{\alpha},\psi\partial_t\nabla\Delta^{-1}f)_{Q_{T'}}+   (\mathbf{u}_{\alpha}\cdot\nabla
								\mathbf{u}_{\alpha},\psi\nabla\Delta^{-1}f)_{Q_{T'}}
								\\&\nonumber\quad+ (\nabla\rho_{\alpha}^{-1}\big(S(\phi_{\alpha},\mathbb{ D}\mathbf{u}_{\alpha})\big),\psi\nabla\Delta^{-1}f)_{Q_{T'}}+  (\rho_{\alpha}^{-1}\big(S(\phi_{\alpha},\mathbb{ D}\mathbf{u}_{\alpha})\big),\psi\nabla^{2}\Delta^{-1}f)_{Q_{T'}}
								\\&\nonumber\quad+(\phi_{\alpha}\rho_{\alpha}^{-1} \nabla \mu_{p,\alpha},\psi\nabla\Delta^{-1}f)_{Q_{T'}}
								\\&\eqqcolon \sum_{i=1}^{6}I_{i}.
							\end{align}
						
						In what following we will estimate $I_{i}(i=1,...,6)$ one by one with the help of \eqref{eqs:energy}.
						
						By H\"{o}lder inequality, there holds
						\begin{align}
							\label{lem55-5}
							\nonumber |I_{1}|& \leq\|\partial_{t}\psi\|_{L^{1}(0,T')}\|\mathbf{u}_{\alpha}\|_{L^{\infty}(0,T';L^{2}(\mathbb{T}^{3}))}\|\nabla\Delta^{-1}f\|_{L^{\infty}(0,T';L^{2}(\mathbb{T}^{3}))}
							\\& \lesssim  \|\partial_{t}\psi\|_{L^{1}(0,T')}\|\mathbf{u}_{\alpha}\|_{L^{\infty}(0,T';L^{2}(\mathbb{T}^{3}))}\|f\|_{L^{\infty}(0,T';L^{2}(\mathbb{T}^{3}))}
							\leq C\|\partial_{t}\psi\|_{L^{1}(0,T')}.
						\end{align}
						Similarly, one has
						\begin{align}
							\label{lem55-6}
							|I_{2}|\lesssim \|\psi\|_{L^{\infty}(0,T')}\|\mathbf{u}_{\alpha}\|_{L^{\infty}(0,T';L^{2}(\mathbb{T}^{3}))}\|\partial_{t}f\|_{L^{\infty}(0,T';L^{2}(\mathbb{T}^{3}))}
							\leq C\|\psi\|_{L^{\infty}(0,T')}
						\end{align}
						and
						\begin{align}
							\label{lem55-7}
							|I_{3}| &\lesssim \|\psi\|_{L^{\infty}(0,T')}\|\mathbf{u}_{\alpha}\|_{L^{\infty}(0,T';L^{2}(\mathbb{T}^{3}))}\|
							\mathbf{u}_{\alpha}\|_{L^{2}(0,T';H^{1}(\mathbb{T}^{3}))}\|f\|_{L^{2}(0,T';L^{\infty}(\mathbb{T}^{3}))}
							\nonumber\\&\leq C\|\psi\|_{L^{\infty}(0,T')}.
						\end{align}

						It follows from $H^{s}(\mathbb{T}^{3})\hookrightarrow W^{1,3}(\mathbb{T}^{3})$  for $s>\frac{3}{2}$ that
						\begin{align}
							\label{lem55-8}
							\nonumber  |I_{4}|+|I_{5}|& \lesssim\|\nabla\rho_{\alpha}^{-1}\|_{L^{\infty}(0,T';L^{3}(\mathbb{T}^{3}))}\|\mathbb{ D}\mathbf{u}_{\alpha}\|_{L^{2}(0,T';L^{2}(\mathbb{T}^{3}))}\|\nabla\Delta^{-1}f\|_{L^{2}(0,T';L^{6}(\mathbb{T}^{3}))}\|\psi\|_{L^{\infty}(0,T')}
							\\&\nonumber \quad +\|\mathbb{ D}\mathbf{u}_{\alpha}\|_{L^{2}(0,T';L^{2}(\mathbb{T}^{3}))}\|\nabla^{2}\Delta^{-1}f\|_{L^{2}(0,T';L^{2}(\mathbb{T}^{3}))}\|\psi\|_{L^{\infty}(0,T')}
							\\&\lesssim \|\psi\|_{L^{\infty}(0,T')}.
						\end{align}
						Finally, we have
						\begin{align}\label{lem55-10}
							\nonumber|I_{6}| &\lesssim \|\phi_{\alpha}\rho_{\alpha}^{-1} \|_{L^{\infty}(0,T';L^{\infty}(\mathbb{T}^{3}))}\|\nabla \mu_{p,\alpha}\|_{L^{2}(0,T';L^{2}(\mathbb{T}^{3}))}\|\nabla\Delta^{-1}f\|_{L^{2}(0,T';L^{2}(\mathbb{T}^{3}))}\|\psi\|_{L^{\infty}(0,T')}
							\\&\lesssim \|\psi\|_{L^{\infty}(0,T')}.
						\end{align}
						
						Combing \eqref{lem55-5}-\eqref{lem55-10} with \eqref{lem55-3} and noting that $\zeta=1+\alpha$, we arrive at
						\begin{align}
							\int_0^{T'}\int_{\mathbb{T}^{3}}p_{\alpha}\psi f\,\dx\dt \leq C\big(\|\psi'\|_{L^{1}(0,T')}+\|\psi\|_{L^{\infty}(0,T')}\big).\label{new pp est-0}
						\end{align}
						Choosing  $\psi=\psi_{m}$ in \eqref{new pp est-0} where
						\begin{equation*}
							\begin{aligned}
								& \psi_{m}\in C_{0}^{\infty}(0,T'),  0\leq\psi_{m}\leq1,
								\\& \psi_{m}(t)=1, t\in\left[\frac{1}{m},T'-\frac{1}{m}\right], |\psi'|\leq 2m,\ m\in \mathbb{N}.
							\end{aligned}
						\end{equation*}
						By \eqref{mode54-4}-\eqref{mode54-5} and Lemma \ref{lem:higher eatimate}, we conclude
						\begin{align}
							\label{eqs:alpha-p-psi-f}
							&\nonumber\lim_{m\to\infty}\int_0^{T'}\int_{\mathbb{T}^{3}}\alpha \psi_{m} p_{\alpha}f\,\dx\dt
							\\&\nonumber=\lim_{m\to\infty}\bigg(\int_0^{T'}\int_{\mathbb{T}^{3}}\psi_{m}(\mu_{p,\alpha}-F'(\phi_{\alpha}))f\,\dx\dt
							-\int_0^{T'}\int_{\mathbb{T}^{3}}\psi_{m}\Lambda^{s}\phi_{\alpha}\Lambda^{s}f\,\dx\dt\bigg)
							\\&\nonumber=\bigg(\int_0^{T'}\int_{\mathbb{T}^{3}}(\mu_{p,\alpha}-F'(\phi_{\alpha}))f\,\dx\dt
							-\int_0^{T'}\int_{\mathbb{T}^{3}}\Lambda^{s}\phi_{\alpha}\Lambda^{s}f\,\dx\dt\bigg)
							\\&=\int_0^{T'}\int_{\mathbb{T}^{3}}\alpha p_{\alpha}f\,\dx\dt,
						\end{align}
						where we have used  $\big(\mu_{p,\alpha}-F'(\phi_{\alpha})\big)f\in L^{1}(Q_{T'})$, $(\Lambda^{s}\phi_{\alpha})\Lambda^{s}f\in L^{1}(Q_{T'})$ and  Lebesgue's dominated convergence theorem.
						
						By sending $m\to\infty$ in \eqref{new pp est-0}($\psi=\psi_{m}$), one can obtain the desired inequality \eqref{new pp est}.
					\end{proof}

					\begin{lemma}\label{lem:limit 2}
						For any $s>\frac{3}{2}$ and $\bg\in H^{1}(0,T';H^{s}(\mathbb{T}^{3}))$ with $\mathrm{div} \bg=0$,  there is a constant $C=C(\bg)>0$  independent of  $\alpha$ such that
						\begin{align}
							\int_0^{T'}\int_{\mathbb{T}^{3}}p_{\alpha}\nabla\phi_{\alpha}\cdot \bg\,\dx\dt\label{new pp est-00}
							\leq C.
						\end{align}
					\end{lemma}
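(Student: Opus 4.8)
The plan is to follow the proof of Lemma \ref{lem:limit 1} almost verbatim, but with a more economical choice of test function. Since $\mathrm{div}\,\bg=0$ one has $\mathrm{div}(\phi_\alpha\bg)=\nabla\phi_\alpha\cdot\bg$, so inserting $\boldsymbol{\varphi}=\psi\phi_\alpha\bg$ (with $\psi=\psi_m\in C_0^\infty(0,T')$, $0\le\psi_m\le1$, $\psi_m\equiv1$ on $[\frac1m,T'-\frac1m]$, $|\psi_m'|\le 2m$, as in \eqref{cut-off function}) into the reformulated momentum balance \eqref{lem55-1} and integrating by parts in time on the $\partial_t\mathbf{u}_\alpha$--term (which is admissible for each fixed $\alpha>0$, as in Lemma \ref{lem:limit 1}) produces precisely the quantity $\zeta(p_\alpha,\psi\nabla\phi_\alpha\cdot\bg)_{Q_{T'}}$. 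Using $\partial_t(\psi\phi_\alpha\bg)=\psi'\phi_\alpha\bg+\psi(\partial_t\phi_\alpha)\bg+\psi\phi_\alpha\partial_t\bg$ this reads
\begin{align*}
	\zeta(p_\alpha,\psi\nabla\phi_\alpha\cdot\bg)_{Q_{T'}}
	&=-(\mathbf{u}_\alpha,\psi'\phi_\alpha\bg)_{Q_{T'}}-(\mathbf{u}_\alpha,\psi(\partial_t\phi_\alpha)\bg)_{Q_{T'}}-(\mathbf{u}_\alpha,\psi\phi_\alpha\partial_t\bg)_{Q_{T'}}
	\\&\quad+(\mathbf{u}_\alpha\cdot\nabla\mathbf{u}_\alpha,\psi\phi_\alpha\bg)_{Q_{T'}}
	+(\rho_\alpha^{-1}S(\phi_\alpha,\mathbb{D}\mathbf{u}_\alpha),\nabla(\psi\phi_\alpha\bg))_{Q_{T'}}
	\\&\quad+(S(\phi_\alpha,\mathbb{D}\mathbf{u}_\alpha),\nabla\rho_\alpha^{-1}\otimes\psi\phi_\alpha\bg)_{Q_{T'}}
	+(\rho_\alpha^{-1}\phi_\alpha\nabla\mu_{p,\alpha},\psi\phi_\alpha\bg)_{Q_{T'}} .
\end{align*}

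Most of these terms are handled exactly as $I_1$--$I_6$ in Lemma \ref{lem:limit 1}, using only the $\alpha$--uniform energy bound \eqref{que}, the uniform inclusion $\phi_\alpha\in L^\infty(Q_{T'})$ together with $\phi_\alpha\in(-1-\theta,1+\theta)$ (so that $\rho_\alpha$ is bounded above and away from zero uniformly in $\alpha$) from Theorem \ref{thm:main}/Lemma \ref{lem2}, and the fact that $\bg,\partial_t\bg\in L^\infty(0,T';H^s)\hookrightarrow L^\infty(0,T';W^{1,3}\cap L^\infty)$; each is then bounded by $C(\bg)\big(\|\psi'\|_{L^1(0,T')}+\|\psi\|_{L^\infty(0,T')}\big)$ with $C(\bg)$ independent of $\alpha$. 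The one genuinely delicate term is $(\mathbf{u}_\alpha,\psi(\partial_t\phi_\alpha)\bg)_{Q_{T'}}$, since a priori $\partial_t\phi_\alpha$ is only $O(\alpha^{-1})$ in $L^2(0,T';L^2(\bbt^3))$ (cf.\ \eqref{uniform phi-alpha}). To bypass this, I would substitute $\partial_t\phi_\alpha=-\mathrm{div}(\phi_\alpha\mathbf{u}_\alpha)+\Delta\mu_{p,\alpha}$ from \eqref{mode54-3} and integrate by parts in space, writing
\begin{align*}
	-(\mathbf{u}_\alpha,\psi(\partial_t\phi_\alpha)\bg)_{Q_{T'}}
	=(\nabla(\psi\,\mathbf{u}_\alpha\cdot\bg),\phi_\alpha\mathbf{u}_\alpha)_{Q_{T'}}-(\nabla(\psi\,\mathbf{u}_\alpha\cdot\bg),\nabla\mu_{p,\alpha})_{Q_{T'}},
\end{align*}
which is legitimate because $\mathbf{u}_\alpha\cdot\bg\in L^2(0,T';H^1(\bbt^3))$ uniformly in $\alpha$; both pieces are then controlled by $C(\bg)\|\psi\|_{L^\infty(0,T')}$ via $\phi_\alpha\mathbf{u}_\alpha\in L^\infty(0,T';L^2)$ and $\nabla\mu_{p,\alpha}\in L^2(0,T';L^2)$ with $\alpha$--uniform bounds. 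The structural point, as in Lemma \ref{lem:limit 1}, is that the pressure is never estimated individually: it is only ever paired against $\nabla\phi_\alpha\cdot\bg$ and reconstructed from the momentum equation. This yields
\begin{align*}
	(p_\alpha,\psi\nabla\phi_\alpha\cdot\bg)_{Q_{T'}}\leq C(\bg)\big(\|\psi'\|_{L^1(0,T')}+\|\psi\|_{L^\infty(0,T')}\big),\qquad C(\bg)\text{ independent of }\alpha .
\end{align*}

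Finally, choosing $\psi=\psi_m$ one has $\|\psi_m\|_{L^\infty(0,T')}\leq1$ and $\|\psi_m'\|_{L^1(0,T')}\leq4$, so the right-hand side is bounded uniformly in $m$; it then remains to pass $m\to\infty$. For this I would argue exactly as in \eqref{eqs:alpha-p-psi-f}: write $\alpha p_\alpha=\mu_{p,\alpha}-F'(\phi_\alpha)-\Lambda^{2s}\phi_\alpha$ from \eqref{mode54-4}--\eqref{mode54-5}, so that
\begin{align*}
	\alpha(p_\alpha,\nabla\phi_\alpha\cdot\bg)_{Q_{T'}}
	=\big(\mu_{p,\alpha}-F'(\phi_\alpha),\nabla\phi_\alpha\cdot\bg\big)_{Q_{T'}}-\big(\Lambda^{s}\phi_\alpha,\Lambda^{s}(\nabla\phi_\alpha\cdot\bg)\big)_{Q_{T'}};
\end{align*}
here the improved regularity $\phi_\alpha\in L^2(0,T';H^{s+\frac12}(\bbt^3))$ from Lemma \ref{lem:higher eatimate} (with $\gamma=1$), together with the fact that $H^s(\bbt^3)$ is a pointwise multiplier on $H^\sigma$ for $|\sigma|\le s$ when $s>\frac32$, shows that both right-hand sides --- and hence $t\mapsto\int_{\bbt^3}p_\alpha\nabla\phi_\alpha\cdot\bg\,\dx$ --- are integrable on $(0,T')$ for each fixed $\alpha$. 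Lebesgue's dominated convergence theorem then gives $(p_\alpha,\psi_m\nabla\phi_\alpha\cdot\bg)_{Q_{T'}}\to(p_\alpha,\nabla\phi_\alpha\cdot\bg)_{Q_{T'}}$, and the uniform-in-$m$ bound above yields \eqref{new pp est-00}. I expect the main obstacle to be the $\partial_t\phi_\alpha$--term just discussed: the whole scheme works only if every occurrence of $\partial_t\phi_\alpha$, of $\Delta\mu_{p,\alpha}$, and of the pressure itself is kept paired against a sufficiently regular function, so that all constants genuinely survive the limit $\alpha\to0$.
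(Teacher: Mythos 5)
Your proposal is correct and follows essentially the same route as the paper: test the reformulated momentum equation \eqref{lem55-1} against $\psi(\phi_\alpha+\text{const.})\bg$ with a cut-off $\psi=\psi_m$ (the paper uses $\varpi=\phi_\alpha-\overline{\phi_\alpha}$ rather than $\phi_\alpha$, which is purely cosmetic since only $\nabla\varpi=\nabla\phi_\alpha$ enters), exploit $\mathrm{div}\,\bg=0$ to recover $\zeta(p_\alpha,\psi\nabla\phi_\alpha\cdot\bg)$ from the pressure term, estimate the resulting forcing terms uniformly in $\alpha$ from the energy bound, and finally pass $m\to\infty$ by dominated convergence using the improved regularity $\phi_\alpha\in L^2(0,T';H^{s+\frac12})$ from Lemma \ref{lem:higher eatimate}. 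The only substantive place where your presentation deviates from the paper is the treatment of the term with $\partial_t\phi_\alpha$: you substitute $\partial_t\phi_\alpha=-\mathrm{div}(\phi_\alpha\bu_\alpha)+\Delta\mu_{p,\alpha}$ and integrate by parts in space, while the paper writes the same pairing as $(\nabla\mathbf{u}_{\alpha}, \psi \nabla\Delta^{-1}(\partial_t\phi_{\alpha})\otimes\bg)+(\mathbf{u}_{\alpha}, \psi(\nabla\Delta^{-1}(\partial_t\phi_{\alpha})\cdot\nabla)\bg)$; both are just two expressions of the same $H^{-1}$-duality pairing, using the $\alpha$-uniform bound on $\partial_t\phi_\alpha$ in $L^2(0,T';H^{-1})$, so the arguments coincide. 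In the dominated-convergence step you write the pairing as $(\Lambda^s\phi_\alpha,\Lambda^s(\nabla\phi_\alpha\cdot\bg))$ and invoke the multiplier property; the paper rebalances explicitly to $(\Lambda^{s+\frac12}\phi_\alpha,\Lambda^{s-\frac12}(\nabla\phi_\alpha\cdot\bg))$, which is what makes the two factors genuinely square-integrable, so it is worth stating the rebalancing rather than leaving it implicit.
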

					\begin{proof}
						Multiplying \eqref{lem55-1} by $\psi\varpi\bg$  and integrating over $Q_{T'}$, it follows that
						\begin{align}
							\nonumber\zeta(p_{\alpha},\psi\nabla\phi_{\alpha} \cdot \bg)_{Q_{T'}}
							&=(\partial_t  \mathbf{u}_{\alpha}, \psi\varpi\bg)_{Q_{T'}} +   (\mathbf{u}_{\alpha}\cdot\nabla
							\mathbf{u}_{\alpha},\psi\varpi\bg)_{Q_{T'}}\nonumber\\&\quad
							+  (\nabla\rho_{\alpha}^{-1}S(\phi_{\alpha},\mathbb{ D}\mathbf{u}_{\alpha}), \psi\varpi \bg)_{Q_{T'}}+  (\rho_{\alpha}^{-1}S(\phi_{\alpha},\mathbb{ D}\mathbf{u}_{\alpha}), \psi\nabla(\varpi \bg))_{Q_{T'}}\nonumber
							\\&\quad+(\phi_{\alpha}\rho_{\alpha}^{-1} \nabla \mu_{p,\alpha},\psi\varpi\bg)_{Q_{T'}}\nonumber
							\\&\eqqcolon\sum_{i=1}^{5}K_{i},\label{new pp est-000}
						\end{align}
						where $\varpi=\phi_{\alpha} -\frac{1}{\absm{\bbt^3}}\int_{\mathbb{T}^{3}} \phi_{\alpha}\,dx$.
						

					In order to give the proof of this lemma, it suffice to give the estimation of $K_{i}(1\leq i \leq 5)$.

					According to \eqref{eqs:energy} and \eqref{mode54-3}, $\partial_{t}\phi_{\alpha}$ is  bounded in $L^{2}(0,T';H^{-1}(\mathbb{T}^{3}))$  uniformly in $\alpha$.
					Then by the H\"{o}lder inequality and \eqref{eqs:energy} one has
					\begin{align}\label{K_{1}}
						\nonumber |K_{1}|&\leq|(\mathbf{u}_{\alpha}, \partial_t\psi \varpi \bg)_{Q_{T'}}|+ |(\mathbf{u}_{\alpha}, \psi\partial_t\phi_{\alpha} \bg)_{Q_{T'}}|+|(\mathbf{u}_{\alpha}, \psi\varpi\partial_t\bg)_{Q_{T'}}|
						\\&\nonumber\leq |(\mathbf{u}_{\alpha}, \partial_t\psi \varpi\bg)_{Q_{T'}}|+ |(\nabla\mathbf{u}_{\alpha}, \psi \nabla\Delta^{-1}(\partial_t\phi_{\alpha})\otimes\bg)_{Q_{T'}}|+|\big(\mathbf{u}_{\alpha}, \psi\big(\nabla\Delta^{-1}(\partial_t\phi_{\alpha})\cdot\nabla\big) \bg\big)_{Q_{T'}}|
						\\&\nonumber\quad+|(\mathbf{u}_{\alpha}, \psi\varpi\partial_t\bg)_{Q_{T'}}|
						\\&\nonumber\leq \|\mathbf{u}_{\alpha}\|_{L^{\infty}(0,T';L^{2}(\mathbb{T}^{3}))}\|\partial_t\psi\|_{L^{1}(0,T')}\|\varpi\|_{L^{\infty}(0,T';L^{2}(\mathbb{T}^{3}))}\|\bg\|_{L^{\infty}(0,T';L^{\infty}(\mathbb{T}^{3}))}
						\\&\nonumber\quad+\|\nabla\mathbf{u}_{\alpha}\|_{L^{2}(0,T';L^{2}(\mathbb{T}^{3}))}\|\psi\|_{L^{\infty}(0,T')}\|\nabla\Delta^{-1}(\partial_{t}\phi_{\alpha})\|_{L^{2}(0,T';L^{2}(\mathbb{T}^{3}))}\|\bg\|_{L^{\infty}(0,T';L^{\infty}(\mathbb{T}^{3}))}
						\\&\nonumber\quad+\|\mathbf{u}_{\alpha}\|_{L^{2}(0,T';L^{6}(\mathbb{T}^{3}))}\|\psi\|_{L^{\infty}(0,T')}\|\nabla\Delta^{-1}(\partial_{t}\phi_{\alpha})\|_{L^{2}(0,T';L^{2}(\mathbb{T}^{3}))}\|\nabla \bg\|_{L^{\infty}(0,T';L^{3}(\mathbb{T}^{3}))}
						\\&\nonumber\quad+\|\mathbf{u}_{\alpha}\|_{L^{\infty}(0,T';L^{2}(\mathbb{T}^{3}))}\|\psi\|_{L^{\infty}(0,T')}\|\varpi\|_{L^{2}(0,T';L^{2}(\mathbb{T}^{3}))}\|\partial_{t}\bg\|_{L^{2}(0,T';L^{\infty}(\mathbb{T}^{3}))}
						\\&\lesssim \|\partial_t\psi\|_{L^{1}(0,T')}+\|\psi\|_{L^{\infty}(0,T')}.
					\end{align}
					
					Similarly, we can get
					\begin{align}\label{K_{2}}
						\nonumber|K_{2}|&\lesssim \|\mathbf{u}_{\alpha} \|_{L^{\infty}(0,T';L^{2}(\mathbb{T}^{3}))}\|\nabla\mathbf{u}_{\alpha} \|_{L^{2}(0,T';L^{2}(\mathbb{T}^{3}))}\|\varpi\|_{L^{\infty}(0,T';L^{\infty}(\mathbb{T}^{3}))}\|\bg\|_{L^{2}(0,T';L^{\infty}(\mathbb{T}^{3}))}\|\psi\|_{L^{\infty}(0,T')}
						\\&\lesssim \|\psi\|_{L^{\infty}(0,T')}
					\end{align}
					and
					\begin{align}\label{K_{3}}
						\nonumber|K_{3}|+|K_{4}|&\leq|(\nabla(\rho_{\alpha}^{-1})S(\phi_{\alpha},\mathbb{ D}\mathbf{u}_{\alpha}), \psi\varpi\bg)_{Q_{T'}}|+|(\rho_{\alpha}^{-1}S(\phi_{\alpha},\mathbb{ D}\mathbf{u}_{\alpha}), \psi\nabla\phi_{\alpha}\otimes\bg)_{Q_{T'}}|
						\\&\nonumber\quad+|(\rho_{\alpha}^{-1}S(\phi_{\alpha},\mathbb{ D}\mathbf{u}_{\alpha}), \psi\varpi\nabla\bg)_{Q_{T'}}|
						\\&\lesssim\nonumber\|\nabla\phi_{\alpha} \|_{L^{\infty}(0,T';L^{3}(\mathbb{T}^{3}))}\|\mathbb{ D}\mathbf{u}_{\alpha} \|_{L^{2}(0,T';L^{2}(\mathbb{T}^{3}))}\|\varpi\|_{L^{\infty}(0,T';L^{6}(\mathbb{T}^{3}))}\|\bg\|_{L^{2}(0,T';L^{\infty}(\mathbb{T}^{3}))}\|\psi\|_{L^{\infty}(0,T')}
						\\& \nonumber\quad+\|\mathbb{ D}\mathbf{u}_{\alpha}\|_{L^{2}(0,T';L^{2}(\mathbb{T}^{3}))}\|\nabla\phi_{\alpha}\|_{L^{\infty}(0,T';L^{2}(\mathbb{T}^{3}))}\|\bg\|_{L^{2}(0,T';L^{\infty}(\mathbb{T}^{3}))}\|\psi\|_{L^{\infty}(0,T')}
						\\& \nonumber\quad + \|\mathbb{ D}\mathbf{u}_{\alpha}\|_{L^{2}(0,T';L^{2}(\mathbb{T}^{3}))}\|\varpi\|_{L^{\infty}(0,T';L^{\infty}(\mathbb{T}^{3}))}\|\nabla \bg\|_{L^{2}(0,T';L^{2}(\mathbb{T}^{3}))}\|\psi\|_{L^{\infty}(0,T')}
						\\&\lesssim \|\psi\|_{L^{\infty}(0,T')}
					\end{align}
					and
					\begin{align}\label{K_{4}}
						\nonumber|K_{5}| &\lesssim \|\nabla\mu_{p,\alpha} \|_{L^{2}(0,T';L^{2}(\mathbb{T}^{3}))}\|\varpi\|_{L^{\infty}(0,T';L^{\infty}(\mathbb{T}^{3}))}\|\bg\|_{L^{2}(0,T';L^{2}(\mathbb{T}^{3}))}\|\psi\|_{L^{\infty}(0,T')}
						\\& \lesssim \|\psi\|_{L^{\infty}(0,T')}.
					\end{align}

					Submitting \eqref{K_{1}}-\eqref{K_{4}} into \eqref{new pp est-000} implies
					\begin{align}
						\zeta(p_{\alpha},\psi\nabla\phi_{\alpha} \cdot \bg)_{Q_{T'}}\lesssim \|\partial_t\psi\|_{L^{1}(0,T')}+\|\psi\|_{L^{\infty}(0,T')}.\label{new pp est-0000}
					\end{align}
					To proceed, thanks to \eqref{eqs:phi-high} we firstly note that
					\begin{align}
						\nonumber\|\Lambda^{s-\frac{1}{2}}(\nabla \phi_\alpha \cdot \mathbf{g})\|_{L^{2}(0,T';L^{2}(\mathbb{T}^{3}))}&\leq \|\Lambda^{s+\frac{1}{2}} \phi_\alpha\|_{L^{2}(0,T';L^{2}(\mathbb{T}^{3}))}\|\mathbf{g}\|_{L^{2}(0,T';L^{2}(\mathbb{T}^{3}))}
						\\&\nonumber\quad+\|\Lambda^{s-\frac{1}{2}}\mathbf{g}\|_{L^{\infty}(0,T';L^{2}(\mathbb{T}^{3}))}\|\nabla \phi_\alpha\|_{L^{1}(0,T';L^{2}(\mathbb{T}^{3}))}
						\\&\nonumber\leq \|\phi_\alpha\|_{L^{2}(0,T';H^{s+\frac{1}{2}}(\mathbb{T}^{3}))}\|\mathbf{g}\|_{L^{2}(0,T';L^{2}(\mathbb{T}^{3}))}
						\\&+
						\|\phi_\alpha\|_{L^{1}(0,T';H^{1}(\mathbb{T}^{3}))}\|\mathbf{g}\|_{L^{\infty}(0,T';H^{s-\frac{1}{2}}(\mathbb{T}^{3}))}
					\end{align}
					which immediately implies $\Lambda^{s-\frac{1}{2}}(\nabla \phi_\alpha \cdot \mathbf{g})\in L^{2}(0,T';L^{2}(\mathbb{T}^{3}))$ and then  $\Lambda^{s+\frac{1}{2}}\phi_{\alpha}\Lambda^{s-\frac{1}{2}}(\nabla\phi_{\alpha}\cdot\bg)\in L^{1}(Q_{T'})$ for all $\bg\in H^{1}(0,T';H^{s}(\mathbb{T}^{3}))$.

					Choosing  $\psi=\psi_{m}$ in \eqref{new pp est-0000} where
					\begin{equation*}
						\begin{aligned}
							& \psi_{m}\in C_{0}^{\infty}(0,T'),  0\leq\psi_{m}\leq1,
							\\& \psi_{m}(t)=1, t\in\left[\frac{1}{m},T'-\frac{1}{m}\right], |\psi'|\leq 2m,\ m\in \mathbb{N}.
						\end{aligned}
					\end{equation*}
					
					By using  $(\mu_{p,\alpha}-F'(\phi_{\alpha})\nabla\phi_{\alpha}\cdot \bg\in L^{1}(Q_{T'})$, $\Lambda^{s+\frac{1}{2}}\phi_{\alpha}\Lambda^{s-\frac{1}{2}}(\nabla\phi_{\alpha}\cdot\bg)\in L^{1}(Q_{T'})$ and the Lebesgue's dominated convergence theorem,
					one derives
					\begin{align}\label{Leb}
						\nonumber&\lim_{m\to\infty}\int_0^{T'}\int_{\mathbb{T}^{3}} \alpha p_{\alpha}\nabla\phi_{\alpha}\cdot \bg\psi_{m}\,\dx\dt&
						\nonumber\\&=\lim_{m\to\infty}\big(\int_0^{T'}\int_{\mathbb{T}^{3}}\psi_{m}(\mu_{p,\alpha}-F'(\phi_{\alpha})\nabla\phi_{\alpha}\cdot \bg\,\dx\dt
						\nonumber\\&\quad-\int_0^{T'}\int_{\mathbb{T}^{3}}\psi_{m}\Lambda^{s+\frac{\gamma}{2}}\phi_{\alpha}\Lambda^{s-\frac{\gamma}{2}}(\nabla\phi_{\alpha}\cdot\bg)\,\dx\dt \big),
						\nonumber\\&=\int_0^{T'}\int_{\mathbb{T}^{3}}(\mu_{p,\alpha}-F'(\phi_{\alpha})\nabla\phi_{\alpha}\cdot \bg\,\dx\dt,
						-\int_0^{T'}\int_{\mathbb{T}^{3}}\Lambda^{s+\frac{1}{2}}\phi_{\alpha}\Lambda^{s-\frac{1}{2}}(\nabla\phi_{\alpha}\cdot\bg)\,\dx\dt,
						\nonumber\\&=\int_0^{T'}\int_{\mathbb{T}^{3}} \alpha p_{\alpha}\nabla\phi_{\alpha}\cdot \bg\,\dx\dt.
					\end{align}

					Finally one can obtain \eqref{new pp est-00}  and  finish the proof by sending $m\to\infty$ in \eqref{new pp est-0000}($\psi=\psi_{m}$).
				\end{proof}

					For the terms $(\rho_{\alpha}\mathbf{u}_{\alpha},\nabla\bg)_{Q_{T}}$ with some $ \bg $ appearing in the sequel argument, we would employ the continuity equation \eqref{mode54-2} to close the estimates.
					\begin{lemma}\label{lem:limit 3}
						For any  $g\in H^{1}(0,T';H^{1})$, there is a constant $C=C(g) > 0$ independent of $\alpha$ such that
						\begin{align}
							(\rho_{\alpha}\mathbf{u}_{\alpha},\nabla g)_{Q_{T'}}\leq C\alpha.\label{new velocity est-00}
						\end{align}
					\end{lemma}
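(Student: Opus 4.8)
The plan is to exploit the quasi-incompressibility constraint \eqref{mode54-6} together with the elementary fact that $\varepsilon = -\tfrac{2\alpha}{1+\alpha}$, so that $|\varepsilon|\le 2\alpha$ and hence both the deviation of $\rho_\alpha$ from the constant $1$ and the divergence of $\mathbf{u}_\alpha$ are of order $\alpha$. Concretely, write $\rho_\alpha = 1 + \widetilde{\rho}_\alpha$ with $\widetilde{\rho}_\alpha \coloneqq \tfrac{\varepsilon}{2}(1+\phi_\alpha)$, so that
\begin{align*}
	(\rho_\alpha\mathbf{u}_\alpha,\nabla g)_{Q_{T'}} = (\mathbf{u}_\alpha,\nabla g)_{Q_{T'}} + (\widetilde{\rho}_\alpha\mathbf{u}_\alpha,\nabla g)_{Q_{T'}},
\end{align*}
and estimate the two pieces separately.

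First I would handle $(\mathbf{u}_\alpha,\nabla g)_{Q_{T'}}$. Since $\mathbf{u}_\alpha \in L^2(0,T';H^1(\bbt^3))$ and $\mu_{p,\alpha}^0 \in L^2(0,T';H^2_{(0)}(\bbt^3))$ by Theorem \ref{thm:main}, the constraint \eqref{mode54-6} holds a.e.\ in $Q_{T'}$, and integrating by parts twice on $\bbt^3$ (no boundary terms) while using $\nabla\mu_{p,\alpha}=\nabla\mu_{p,\alpha}^0$ gives
\begin{align*}
	(\mathbf{u}_\alpha,\nabla g)_{Q_{T'}} = -(\mathrm{div}\,\mathbf{u}_\alpha,g)_{Q_{T'}} = -\alpha(\Delta\mu_{p,\alpha}^0,g)_{Q_{T'}} = \alpha(\nabla\mu_{p,\alpha}^0,\nabla g)_{Q_{T'}}.
\end{align*}
The dissipation term in the energy inequality \eqref{que} provides $\|\nabla\mu_{p,\alpha}^0\|_{L^2(Q_{T'})}\le C$ uniformly in $\alpha$, whence $|(\mathbf{u}_\alpha,\nabla g)_{Q_{T'}}| \le \alpha\|\nabla\mu_{p,\alpha}^0\|_{L^2(Q_{T'})}\|\nabla g\|_{L^2(Q_{T'})} \le C(g)\alpha$.

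For the remaining term, Theorem \ref{thm:main} gives $\phi_\alpha \in (-1-\theta,1+\theta)$ uniformly, so $\|\widetilde{\rho}_\alpha\|_{L^\infty(Q_{T'})} \le \tfrac{|\varepsilon|}{2}(2+\theta) \le (2+\theta)\alpha$; moreover, for $\alpha$ small one has $\rho_\alpha \ge \tfrac12$, hence the energy inequality \eqref{que} (together with the uniform boundedness of $\int_{\bbt^3}(F(\phi_\alpha)+\tfrac12|\Lambda^s\phi_\alpha|^2)$, which follows from $\phi_\alpha\in L^\infty$) yields $\|\mathbf{u}_\alpha\|_{L^2(Q_{T'})} \le \sqrt{T'}\,\|\mathbf{u}_\alpha\|_{L^\infty(0,T';L^2(\bbt^3))} \le C$ uniformly in $\alpha$. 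Therefore $|(\widetilde{\rho}_\alpha\mathbf{u}_\alpha,\nabla g)_{Q_{T'}}| \le \|\widetilde{\rho}_\alpha\|_{L^\infty(Q_{T'})}\|\mathbf{u}_\alpha\|_{L^2(Q_{T'})}\|\nabla g\|_{L^2(Q_{T'})} \le C(g)\alpha$, and adding the two bounds proves \eqref{new velocity est-00}.

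There is no genuine obstacle here: unlike Lemmas \ref{lem:limit 1} and \ref{lem:limit 2}, the pressure does not enter, and the argument reduces to tracking the explicit $\alpha$-dependence of $\varepsilon$ and invoking \eqref{mode54-6}. The only point requiring a little care is to perform the integration by parts directly in the a.e.\ form of \eqref{mode54-6}; routing the argument instead through the weak continuity equation \eqref{mweak2} with a time cutoff $\psi_m$ satisfying $|\psi_m'|\lesssim m$ would produce an uncontrolled factor. (Note also that the hypothesis $g\in H^1(0,T';H^1(\bbt^3))$ is stronger than what this proof actually uses—only $g\in L^2(0,T';H^1(\bbt^3))$ is needed—but we keep it as stated since $g$ appears with this regularity in the subsequent relative-entropy estimates.)
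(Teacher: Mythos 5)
Your decomposition $\rho_\alpha = 1 + \widetilde{\rho}_\alpha$ is correct and gives a valid, somewhat more elementary proof than the paper's. The paper instead tests the continuity equation $\partial_t\rho_\alpha + \mathrm{div}(\rho_\alpha\mathbf{u}_\alpha)=0$ against $\psi_m g$ (a time-localized test function), integrates by parts in time to transfer $\partial_t$ onto $\psi_m$ and $g$, and then uses the same $L^\infty$ smallness $\rho_\alpha - 1 = -\tfrac{\alpha}{1+\alpha}(\phi_\alpha+1)$; the resulting bound is $\lesssim\alpha(\|\psi_m'\|_{L^1(0,T')}+\|\psi_m\|_{L^\infty(0,T')})$. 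Your approach avoids the continuity equation entirely for the $O(1)$ piece of $\rho_\alpha$ by invoking the constraint $\mathrm{div}\,\mathbf{u}_\alpha=\alpha\Delta\mu_{p,\alpha}$ a.e.\ (which does hold for the constructed weak solution, since $\mathbf{u}_\alpha\in L^2(H^1)$, $\mu_{p,\alpha}^0\in L^2(H^2_{(0)})$, and the two weak conservation laws combine to give this identity as an equality of $L^2$ functions); this is a legitimate shortcut and makes it transparent that only $g\in L^2(0,T';H^1)$ is really needed.

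However, your closing parenthetical about the paper's route is wrong: you assert that using the cutoff $\psi_m$ with $|\psi_m'|\lesssim m$ "would produce an uncontrolled factor." In fact $\psi_m'$ is supported on two intervals of length $1/m$ where $|\psi_m'|\le 2m$, so $\|\psi_m'\|_{L^1(0,T')}\le 4$ uniformly in $m$ — it is $\|\psi_m'\|_{L^\infty}$ that blows up, not $\|\psi_m'\|_{L^1}$. The paper's estimate is stated precisely in terms of $\|\partial_t\psi\|_{L^1}$ and $\|\psi\|_{L^\infty}$ and therefore passes to the limit $m\to\infty$ without any loss, exactly as in Lemmas \ref{lem:limit 1} and \ref{lem:limit 2}. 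So the paper's argument is not flawed; your approach is simply an alternative, not a repair.
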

					\begin{proof}
						Testing \eqref{mode54-2} by $\psi g$ where $\psi\in C_{0}^{\infty}(0,T)$, one obtains
						\begin{align}\label{lem:limit31}
							(\rho_{\alpha}\mathbf{u}_{\alpha},\psi\nabla g)_{Q_{T'}}&=(\partial_{t}(\rho_{\alpha}-1),\psi g)_{Q_{T'}}
							\nonumber\\&=-((\rho_{\alpha}-1),\partial_{t}\psi g)_{Q_{T'}}-((\rho_{\alpha}-1),\psi\partial_{t}g)_{Q_{T'}} \nonumber\\&\lesssim \|(\rho_{\alpha}-1)\|_{L^{\infty}(0,T;L^{\infty}(\mathbb{T}^{3}))}\|\partial_{t}\psi\|_{L^{1}(0,T)}\|\bg\|_{L^{\infty}(0,T;L^{1}(\mathbb{T}^{3}))}
							\nonumber\\&\quad+\|(\rho_{\alpha}-1)\|_{L^{\infty}(0,T;L^{\infty}(\mathbb{T}^{3}))}\|\psi\|_{L^{\infty}(0,T)}\|\partial_{t}\bg\|_{L^{1}(0,T;L^{1}(\mathbb{T}^{3}))}
							\nonumber\\&\lesssim \alpha\big(\|\partial_{t}\psi\|_{L^{1}(0,T)}+\|\psi\|_{L^{\infty}(0,T)}\big),
						\end{align}
						where we have used
						\begin{align*}
							\rho_{\alpha}-1=\frac{\varepsilon}{2}\phi_{\alpha}+\frac{\varepsilon}{2}=-\frac{\alpha}{1+\alpha}(\phi_{\alpha}+1).
						\end{align*}

						By choosing $\psi_m$ defined in Lemma \ref{lem:higher eatimate} (or Lemma \ref{lem:limit 1}, Lemma \ref{lem:limit 2}) and repeating the similar strategy as Lemma \ref{lem:higher eatimate}, we can conclude the proof.
					\end{proof}
					
					\subsection{Proof of Theorem \ref{thm:limit}}
					We begin to give the detailed proof of Theorem \ref{thm:limit} based on  the relative entropy method.
					
					\textbf{\underline{Step 1}: relative energy inequality.} In this part, we aim to derive the relative energy inequality.
					
					To get the first term in \eqref{54-17}, we test \eqref{mode54-1} with $\mathbf{u}$ in $ L^2 $ and integrate the resultant over $(0,\tau)(\tau\leq T')$ to get
					\begin{align}
						& - \bigg[\int_{\mathbb{T}^{3}}\rho_{\alpha}  \mathbf{u}_{\alpha}\cdot \mathbf{u}\,\dx\bigg]_{t=0}^{t=\tau} +  \int_0^\tau\int_{\mathbb{T}^{3}}\rho_{\alpha}  \mathbf{u}_{\alpha}\cdot \partial_{t} \mathbf{u}\,\dx\dt\nonumber \\&\quad+ \int_0^\tau \int_{\mathbb{T}^{3}} \left (\rho_{\alpha}  \mathbf{u}_{\alpha}\otimes
						\mathbf{u}_{\alpha}: \nabla \mathbf{u}\right)\,\dx\dt
						- \int_0^\tau \int_{\mathbb{T}^{3}} \big(S(\phi_{\alpha},\mathbb{ D}\mathbf{u}_{\alpha}\big) : \nabla \mathbf{u}\,\dx\dt
						\nonumber\\
						&
						\label{mode54-11} \quad -  \int_0^\tau\int_{\mathbb{T}^{3}}(\nabla p_{\alpha}+\phi_{\alpha} \nabla \mu_{p,\alpha}-\alpha\phi_{\alpha}\nabla p_{\alpha})\cdot \mathbf{u}\,\dx\dt=0.
					\end{align}
					Next testing the continuity equation \eqref{mode54-2} with $\frac{|\mathbf{u}|^{2}}{2}$ yields
					\begin{align}
						\label{mode54-12}
						& \bigg[\int_{\mathbb{T}^{3}}\frac{1}{2}\rho_{\alpha} |\mathbf{u}|^{2}\,\dx\bigg]_{t=0}^{t=\tau}
						-\int_0^\tau\int_{\mathbb{T}^{3}}(\rho_{\alpha} \mathbf{u} \cdot \partial _{t}\mathbf{u})\,\dx\dt-\int_0^\tau\int_{\mathbb{T}^{3}}\rho_{\alpha} (\mathbf{u}_{\alpha}\cdot \nabla) \mathbf{u} \cdot \mathbf{u}\,\dx\dt=0.
					\end{align}
					Combining the energy inequality \eqref{que}, \eqref{mode54-11} and  \eqref{mode54-12},  it follows that
				\begin{align}
					&\bigg[\int_{\mathbb{T}^{3}}F(\phi_{\alpha})\,\dx\bigg]_{t=0}^{t=\tau}
					+ \bigg[\int_{\mathbb{T}^{3}}\frac{1}{2}\rho_{\alpha} |\mathbf{u}_{\alpha}-\mathbf{u}|^{2}\,\dx\bigg]_{t=0}^{t=\tau}
					+
					\bigg[\frac{1}{2}\int_{\mathbb{T}^{3}} |\Lambda^{s}\phi_{\alpha}|^{2}\,\dx\bigg]_{t=0}^{t=\tau}
					\nonumber \\&\quad
					+\int_0^{\tau}\int_{\mathbb{T}^{3}} \big( 2 \eta(\phi_{\alpha})
					\mathbb{ D}(\mathbf{u}_{\alpha}):\mathbb{ D}(\mathbf{u}_{\alpha})-\frac{2}{3}\eta(\phi_{\alpha}) (\mathrm{div} \mathbf{u}_{\alpha})^2\big)\dxdt \nonumber \\&\quad
					+\int_0^{\tau}\int\limits_{\mathbb{T}^{3}}|\nabla \mu_{p,\alpha}|^{2}\dxdt
					+  \int_0^\tau\int_{\mathbb{T}^{3}}\rho_{\alpha}  (\mathbf{u}_{\alpha}-\mathbf{u})\cdot\big (\partial_{t} \mathbf{u}+\nabla\mathbf{u}\cdot\mathbf{u}\big)\,\dx\dt
					\nonumber \\&\quad-  \int_0^\tau\int_{\mathbb{T}^{3}}\big(S(\phi_{\alpha},\mathbb{ D}\mathbf{u}_{\alpha}\big) : \nabla \mathbf{u}\,\dx\dt
					- \nonumber  \int_0^\tau\int_{\mathbb{T}^{3}}(\nabla p_{\alpha}+\phi_{\alpha} \nabla \mu_{p,\alpha}-\alpha\phi_{\alpha}\nabla p_{\alpha})\cdot \mathbf{u}\,\dx\dt
					\\&
					\quad +\int_0^\tau\int_{\mathbb{T}^{3}}(\rho_{\alpha} (\mathbf{u}_{\alpha}-\mathbf{u})\otimes(\mathbf{u}_{\alpha}-\mathbf{u}):\nabla \mathbf{u} )\,\dx\dt
					\leq 0.
					\label{mode54-122}
				\end{align}
				
				To get the last two terms in \eqref{54-17}, multiplying \eqref{mode54-3} by $-\mu$ and integrating over $(0,\tau)\times\mathbb{T}^{3}$, one obtains
				\begin{align}
					\label{mode54-13}
					& -\bigg[\int_{\mathbb{T}^{3}}\phi_{\alpha}   F'(\phi)\,\dx\bigg]_{t=0}^{t=\tau}
					\nonumber +\int_0^\tau\int_{\mathbb{T}^{3}}\phi_{\alpha} \partial _{t}\phi F''(\phi)\,\dx\dt-\int_0^\tau\int_{\mathbb{T}^{3}}\partial _{t} \Lambda^{s}\phi_{\alpha} \Lambda^{s}\phi\,\dx\dt
					\\&\quad-\int_0^\tau\int_{\mathbb{T}^{3}}\mathrm{div}(\phi_{\alpha}\mathbf{u}_{\alpha}) \mu\,\dx\dt-\int_0^\tau\int_{\mathbb{T}^{3}}(\nabla\mu_{p,\alpha}\cdot \nabla \mu)\,\dx\dt=0.
				\end{align}
				Next by testing \eqref{no density} by $-\mu_{p,\alpha}$ and $\mu$ respectively, we find
				\begin{align}
					\label{mode54-131}
					& -\int_0^\tau\int_{\mathbb{T}^{3}} \partial_{t}\phi F'(\phi_{\alpha})\,\dx\dt
					\nonumber -\int_0^\tau\int_{\mathbb{T}^{3}} \alpha\partial _{t}\phi p_{\alpha}\,\dx\dt-\int_0^\tau\int_{\mathbb{T}^{3}}\partial _{t} \Lambda^{s}\phi \Lambda^{s}\phi_{\alpha}\,\dx\dt
					\\&\quad-\int_0^\tau\int_{\mathbb{T}^{3}}\mathrm{div}(\phi\mathbf{u}) \mu_{p,\alpha}\,\dx\dt-\int_0^\tau\int_{\mathbb{T}^{3}}(\nabla\mu_{p,\alpha}\cdot \nabla \mu)\,\dx\dt=0
				\end{align}
				and
				\begin{align}
					\label{mode54-14}
					& \bigg[\int_{\mathbb{T}^{3}}\frac{1}{2}|\Lambda^{s}\phi|^{2}\,\dx\bigg]_{t=0}^{t=\tau}+\int_0^\tau\int_{\mathbb{T}^{3}} \partial_{t}\phi F'(\phi)\,\dx\dt
					\nonumber
					\\&\quad+\int_0^\tau\int_{\mathbb{T}^{3}}\mathrm{div}(\phi\mathbf{u}) \mu\,\dx\dt
					+\int_0^\tau\int_{\mathbb{T}^{3}}|\nabla\mu|^2\,\dx\dt
					=0.
				\end{align}
				
				Collecting  \eqref{mode54-13}--\eqref{mode54-14} gives rise to
				\begin{align}
					\label{mode54-15}
					& \nonumber \bigg[\frac{1}{2}\int_{\mathbb{T}^{3}} |\Lambda^{s}\phi_{\alpha} - \Lambda^{s}\phi|^{2}\,\mathrm dx\bigg]_{t=0}^{t=\tau}+
					\bigg[\int_{\mathbb{T}^{3}}\big(F(\phi_{\alpha})- F'(\phi)(\phi_{\alpha}-\phi)-F(\phi)\big)\,\dx\bigg]_{t=0}^{t=\tau}
					\\&\quad\nonumber+\int_0^{\tau}\int_{\mathbb{T}^{3}}|\nabla \mu_{p,\alpha}-\nabla \mu|^{2}\dxdt-\int_0^\tau\int_{\mathbb{T}^{3}}\mathrm{div}(\phi_{\alpha}\mathbf{u}_{\alpha})\mu\,\dx\dt
					\\&\quad\nonumber  -\int_0^\tau\int_{\mathbb{T}^{3}}\mathrm{div}(\phi\mathbf{u})\mu_{p,\alpha}\,\dx\dt+\int_0^\tau\int_{\mathbb{T}^{3}}\mathrm{div}(\phi\mathbf{u}) \mu\,\dx\dt
					\\&\quad+\int_0^\tau\int_{\mathbb{T}^{3}}\partial_{t}\phi (F'(\phi)-F''(\phi)(\phi-\phi_{\alpha})-F'(\phi_{\alpha}))\,\dx\dt
					-\int_0^\tau\int_{\mathbb{T}^{3}} \alpha\partial _{t}\phi p_{\alpha}\dx\dt\nonumber
					\\&=\bigg[\frac{1}{2}\int_{\mathbb{T}^{3}} |\Lambda^{s}\phi_{\alpha}|^{2}\,\dx\bigg]_{t=0}^{t=\tau}+\bigg[\int_{\mathbb{T}^{3}}F(\phi_{\alpha})\,\dx\bigg]_{t=0}^{t=\tau}+\int_0^{\tau}\int\limits_{\mathbb{T}^{3}}|\nabla \mu_{p,\alpha}|^{2}\dxdt,
				\end{align}
				where we have used
				\begin{align*}
					\bigg[\int_{\mathbb{T}^{3}}F'(\phi)\phi-F(\phi)\,\dx\bigg]_{t=0}^{t=\tau}-\int_0^\tau\int_{\mathbb{T}^{3}} \partial_{t}\phi\phi F''(\phi)\,\dx\dt = 0.
				\end{align*}

				It follows from \eqref{mode54-122}, \eqref{mode54-15} and direct computations that
				\begin{align}
					\label{mode54-221}
					& \nonumber \bigg[\int_{\mathbb{T}^{3}}\frac{1}{2}\rho_{\alpha} |\mathbf{u}_{\alpha}-\mathbf{u}|^{2}\,\dx+\frac{1}{2}\int_{\mathbb{T}^{3}} |\Lambda^{s}\phi_{\alpha} - \Lambda^{s}\phi|^{2}\,\mathrm dx\bigg]_{t=0}^{t=\tau}
					\\&\quad\nonumber
					+\bigg[\int_{\mathbb{T}^{3}}\big(F(\phi_{\alpha})- F'(\phi)(\phi_{\alpha}-\phi)-F(\phi)\big)\,\dx\bigg]_{t=0}^{t=\tau}
					\\&\quad\nonumber+\int_0^{\tau}\int_{\mathbb{T}^{3}}|\nabla \mu_{p,\alpha}-\nabla \mu|^{2}\dxdt
					+ \int_0^\tau\int_{\mathbb{T}^{3}} (2\eta(\phi_{\alpha}) \mathbb{ D}(\mathbf{u}_{\alpha}-\mathbf{u}): \mathbb{ D}(\mathbf{u}_{\alpha}-\mathbf{u}))\,\dx\dt
					\\&\quad\nonumber-\int_0^{\tau}\int_{\mathbb{T}^{3}}  \frac{2}{3}\eta(\phi_{\alpha}) (\mathrm{div} \mathbf{u}_{\alpha})^2\, \mathrm{d} xdt
					\\&\leq \sum_{i=1}^{10}H_{i},
				\end{align}
				where
				\begin{align}
					\label{mode54-2231}
					\nonumber\sum_{i=1}^{10}H_{i}:&=
					\int_0^\tau\int_{\mathbb{T}^{3}}  \rho_{\alpha}(\mathbf{u}_{\alpha}-\mathbf{u})\cdot \nabla p\,\dx\dt
					-2\int_0^\tau\int_{\mathbb{T}^{3}} (\eta(\phi_{\alpha})-\eta(\phi)) \mathbb{ D}\mathbf{u}: \mathbb{ D}(\mathbf{u}_{\alpha}-\mathbf{u})\,\dx\dt
					\\&\nonumber\quad-  2\int_0^\tau\int_{\mathbb{T}^{3}}(\rho_{\alpha}-1)  (\mathbf{u}_{\alpha}-\mathbf{u})\cdot \mathrm{div}(\eta(\phi)\mathbb{ D}\mathbf{u})\,\dx\dt
					-\nonumber \alpha \int_0^\tau\int_{\mathbb{T}^{3}}\phi_{\alpha}\nabla p_{\alpha}\cdot \mathbf{u}\,\dx\dt
					\\&
					\quad\nonumber+\int_0^\tau\int_{\mathbb{T}^{3}}(\phi-\phi_{\alpha})\mathbf{u} \cdot(\nabla\mu-\nabla\mu_{p,\alpha})\,\dx\dt -\int_0^\tau\int_{\mathbb{T}^{3}}(\phi_{\alpha}-\phi)(\mathbf{u}_{\alpha}-\mathbf{u})\cdot\nabla\mu\,\dx\dt
					\\&\quad
					\nonumber+ \int_0^\tau\int_{\mathbb{T}^{3}}(\rho_{\alpha}-1)  (\mathbf{u}_{\alpha}-\mathbf{u})\cdot (\phi\nabla\mu)\,\dx\dt
					-\int_0^\tau\int_{\mathbb{T}^{3}}\rho_{\alpha} (\mathbf{u}_{\alpha}-\mathbf{u})\cdot \nabla \mathbf{u} \cdot (\mathbf{u}_{\alpha}-\mathbf{u})\,\dx\dt
					\\&\quad-\int_0^\tau\int_{\mathbb{T}^{3}}\partial_{t}\phi (F'(\phi)-F''(\phi)(\phi-\phi_{\alpha})-F'(\phi_{\alpha}))\,\dx\dt
					+\int_0^\tau\int_{\mathbb{T}^{3}} \alpha\partial _{t}\phi p_{\alpha}\,\dx\dt.
				\end{align}
				Recalling that $S(\phi_{\alpha},\mathbb{ D}\mathbf{u}_{\alpha})=2\eta(\phi_{\alpha}) \mathbb{ D}( \mathbf{u}_{\alpha})-\frac{2}{3}\eta(\phi_{\alpha}) (\mathrm{div}  \mathbf{u}_{\alpha}) \mathbf{I}$, there holds
				\begin{align*}
					& \int_0^\tau\int_{\mathbb{T}^{3}} 2\eta(\phi_{\alpha}) \mathbb{ D}(\mathbf{u}_{\alpha}-\mathbf{u}): \mathbb{ D}(\mathbf{u}_{\alpha}-\mathbf{u})\,\dx\dt
					-\int_0^{\tau}\int_{\mathbb{T}^{3}}  (\frac{2}{3}\eta(\phi_{\alpha}) (\mathrm{div} \mathbf{u}_{\alpha})^2)\, \mathrm{d} xdt
					\\&=\int_0^\tau\int_{\mathbb{T}^{3}} S(\phi_{\alpha},\mathbb{ D}(\mathbf{u}_{\alpha}-\mathbf{u})):
					\big( \mathbb{ D}( \mathbf{u}_{\alpha}-\mathbf{u})-\frac{1}{3} (\mathrm{div}  \mathbf{u}_{\alpha}) \mathbf{I}\big)\dx\dt
					\nonumber\\&=\int_0^\tau\int_{\mathbb{T}^{3}} 2\eta(\phi_{\alpha})|
					\mathbb{ D}( \mathbf{u}_{\alpha}-\mathbf{u})-\frac{1}{3} (\mathrm{div}  \mathbf{u}_{\alpha}) \mathbf{I}\big|^{2}\dx\dt\nonumber
					\\&\geq C \| \mathbf{u}_{\alpha}-\mathbf{u}\|_{L^2(0,\tau;H^{1}(\mathbb{T}^{3}))}^{2}-\int_0^{\tau}\norm{\bu-\bu_\alpha}_{L^2(\bbt^3)}^{2}dt,
				\end{align*}
				where we have used the generalized Korn's inequality \cite[Theorem 10.16]{FN}
				\begin{align*}
					\int_{\mathbb{T}^{3}}|
					\mathbb{ D}( \mathbf{u}_{\alpha}-\mathbf{u})-\frac{1}{3} (\mathrm{div}  \mathbf{u}_{\alpha}) \mathbf{I}\big|^{2}\dx+ \int_{\mathbb{T}^{3}} |\mathbf{u}_{\alpha}-\mathbf{u}|\,\dx
					\geq C\| \mathbf{u}_{\alpha}-\mathbf{u}\|_{H^{1}(\mathbb{T}^{3})}^{2}.
				\end{align*}

				Following a similar argument as \cite[(3.11)]{ALN2024}, one obtains
				\begin{align}
					\nonumber
					\bigg[\frac{\kappa}{2}\int_{\bbt^3} \abs{\phi - \phi_\alpha}^2 \,\dx\bigg]_{t=0}^{t=\tau} 
					& = - \kappa \int_0^\tau \int_{\bbt^3} (\nabla \mu_{p,\alpha} - \nabla \mu) \cdot (\nabla \phi_\alpha - \nabla \phi) \dxdt \\
					\label{eqs:phi-phi-alpha}
					& \quad - \kappa \int_0^\tau \int_{\bbt^3} ( \phi_\alpha - \phi) (\bu_\alpha - \bu) \cdot \nabla \phi \dxdt
					\eqqcolon H_{11} + H_{12}.
				\end{align}
				Due to Assumption \ref{ass:main}, we can decompose $F$ into a convex part $\Phi$ and a non-convex part $- \frac{\kappa}{2} \phi^2$. Namely, 
				\begin{align*}
					& \bigg[\int_{\mathbb{T}^{3}}\big(F(\phi_{\alpha})- F'(\phi)(\phi_{\alpha}-\phi)-F(\phi)\big)\,\dx\bigg]_{t=0}^{t=\tau} \\
					& = \bigg[\int_{\mathbb{T}^{3}}\big(\Phi(\phi_{\alpha})- \Phi'(\phi)(\phi_{\alpha}-\phi)-\Phi(\phi)\big)\,\dx\bigg]_{t=0}^{t=\tau}
					- \bigg[\frac{\kappa}{2}\int_{\bbt^3} \abs{\phi - \phi_\alpha}^2 \,\dx\bigg]_{t=0}^{t=\tau}.
				\end{align*}
				Now we rewrite \eqref{mode54-221} with \eqref{eqs:phi-phi-alpha} as follows
				\begin{align}
					\label{mode54-223}
					&\nonumber \bigg[ E(\phi_{\alpha},\mathbf{u}_{\alpha}|\phi,\mathbf{u})\bigg]_{t=0}^{t=\tau}
					\\&\quad\nonumber+\int_0^{\tau}\int_{\mathbb{T}^{3}}|\nabla \mu_{p,\alpha}-\nabla \mu|^{2}\dxdt
					+  C_1 \| \mathbf{u}_{\alpha}-\mathbf{u}\|_{L^2(0,\tau;H^{1}(\mathbb{T}^{3}))}^{2}
					\\&\leq\sum_{i=1}^{12}H_{i}+ C_2 \int_0^\tau \| \mathbf{u}_{\alpha}-\mathbf{u}\|_{L^{2}(\mathbb{T}^{3})}^{2} \,\dt,
				\end{align}
				where $C_1, C_2$ are two positive constants.
				
				\textbf{\underline{Step 2}: remainder estimates and Gr\"onwall's argument.}
				The main goal in the following is to control all the terms in \eqref{mode54-2231} such that they can either be part of the relative energy or be absorbed by the relative dissipation, which finally ensures a Gr\"onwall's argument.
				
				By the H\"{o}lder inequality and Lemma \ref{lem:limit 3}, we have
				\begin{align}
					\label{mode54-224}
					\nonumber H_{1}&=
					\int_0^\tau\int_{\mathbb{T}^{3}}  \rho_{\alpha}\mathbf{u}_{\alpha}\cdot \nabla p\,\dx\dt+\int_0^\tau\int_{\mathbb{T}^{3}}  (\rho_{\alpha}-1)\mathbf{u}\cdot \nabla p\,\dx\dt-\int_0^\tau\int_{\mathbb{T}^{3}}  \mathbf{u}\cdot \nabla p\,\dx\dt
					\\&\nonumber \lesssim \alpha
					+ \alpha\big(\|\phi_{\alpha}\|_{L^{\infty}(0,\tau;L^{\infty}(\mathbb{T}^{3}))}+1\big)\|\mathbf{u}\|_{L^{\infty}(0,\tau;L^{2}(\mathbb{T}^{3}))}\| \nabla p\|_{L^{1}(0,\tau;L^{2}(\mathbb{T}^{3}))}
					\\&\leq C(T',D)\alpha.
				\end{align}
				
				By the Young's inequality and Sobolev embedding, one has
				\begin{align}
					\label{mode54-2241}
					\nonumber H_{2}
					&\lesssim \|\mathbb{ D}\mathbf{u}\|_{L^{\infty}(0,\tau;L^{4}(\mathbb{T}^{3}))}\int_0^\tau\|\eta(\phi_{\alpha})-\eta(\phi)\|_{L^{4}(\mathbb{T}^{3})}
					\|\mathbb{ D}(\mathbf{u}_{\alpha}-\mathbf{u})\|_{L^{2}(\mathbb{T}^{3})}\dt
					\\&\leq C(T',D)\int_0^\tau\|\Lambda^s\phi-\Lambda^s\phi_{\alpha}\|_{L^{2}(\bbt^3)}^2\dt+\frac{C_1}{4} \| \mathbf{u}_{\alpha}-\mathbf{u}\|_{L^2(0,\tau;H^{1}(\mathbb{T}^{3}))}^{2}.
				\end{align}
				In a similar way we can show
				\begin{align}
					\label{mode54-225}
					\nonumber H_{3}
					&\lesssim\nonumber \alpha\|\mathbf{u}_{\alpha}-\mathbf{u}\|_{L^{\infty}(0,\tau;L^{2}(\mathbb{T}^{3}))}(\|\nabla\phi\|_{L^{\infty}(0,\tau;L^{3}(\mathbb{T}^{3}))}
					\|\mathbb{ D}\mathbf{u}\|_{L^{1}(0,\tau;L^{6}(\mathbb{T}^{3}))}
					\\&\nonumber\quad+
					\|\mathbf{u}\|_{L^{1}(0,\tau;H^{2}(\mathbb{T}^{3}))})
					\\&\leq C(T',D)\alpha.
				\end{align}

				Taking $\mathbf{g}=\mathbf{u}$ in Lemma \ref{lem:limit 2} directly leads to
				\begin{align}
					\label{mode54-226}
					H_{4}=\alpha\int_0^\tau\int_{\mathbb{T}^{3}}p_{\alpha}\nabla\phi_{\alpha} \cdot \mathbf{u}\,\dx\dt\leq C(T',D)\alpha.
				\end{align}
				
				In view of the Sobolev embedding and Young's inequality, we get
				\begin{align}
					\label{mode54-227}
					\nonumber H_{5}
					& \leq\int_0^\tau\|\phi-\phi_{\alpha}\|_{L^{2}(\mathbb{T}^{3})}\|\mathbf{u}\|_{L^{\infty}(\mathbb{T}^{3})}
					\|\nabla\mu-\nabla\mu_{p,\alpha}\|_{L^{2}(\mathbb{T}^{3})}\,\dt
					\\&\leq \nonumber \|\mathbf{u}\|_{L^{\infty}(0,T;L^{\infty}(\mathbb{T}^{3}))}\int_0^\tau\|\Lambda^s\phi-\Lambda^s\phi_{\alpha}\|_{L^{2}(\mathbb{T}^{3})}\|\nabla\mu-\nabla\mu_{p,\alpha}\|_{L^{2}(\mathbb{T}^{3})}\,\dt
					\\&\leq  C(T',D)\int_0^\tau\|\Lambda^s\phi-\Lambda^s\phi_{\alpha}\|_{L^{2}(\bbt^3)}^2 \,\dt+
					\frac{1}{4}\int_0^\tau\|\nabla\mu-\nabla\mu_{p,\alpha}\|_{L^{2}(\mathbb{T}^{3})}^{2}\,\dt
				\end{align}
				and
				\begin{align}
					\label{mode54-228}
					\nonumber H_{6}
					&\leq\int_0^\tau\|\phi-\phi_{\alpha}\|_{L^{6}(\mathbb{T}^{3})}\|\nabla\mu\|_{L^{3}(\mathbb{T}^{3})}\|\mathbf{u}_{\alpha}-\mathbf{u}\|_{L^{2}(\mathbb{T}^{3})}\,\dt
					\\&\leq\nonumber C\|\nabla\mu\|_{L^{\infty}(0,T;L^{3}(\mathbb{T}^{3}))}\int_0^\tau\|\Lambda^s\phi-\Lambda^s\phi_{\alpha}\|_{L^{2}(\bbt^3)}\|\mathbf{u}_{\alpha}-\mathbf{u}\|_{L^{2}(\mathbb{T}^{3})}\,\dt
					\\&\leq C(T',D)\bigg(\int_0^\tau\|\Lambda^s\phi-\Lambda^s\phi_{\alpha}\|_{L^{2}(\bbt^3)}^2\,\dt
					+\int_0^\tau\|\mathbf{u}_{\alpha}-\mathbf{u}\|_{L^{2}(\mathbb{T}^{3})}^{2}\,\dt\bigg).
				\end{align}

				Thanks to  the Young's inequality  we have
				\begin{align}
					\label{mode54-229}
					H_{7}
					\lesssim\alpha \|\phi\|_{L^{\infty}(Q_\tau)}\int_0^\tau\|\mathbf{u}_{\alpha}-\mathbf{u}\|_{L^{2}(\mathbb{T}^{3})}\|\nabla\mu\|_{L^{2}(\mathbb{T}^{3})}\,\dt\leq C(T',D)\alpha.
				\end{align}
				Moreover, one has
				\begin{align}
					\label{mode54-2210}
					\nonumber H_{8}
					&\lesssim C\int_0^\tau\|\rho_{\alpha}\|_{L^{\infty}(\mathbb{T}^{3})}\|\mathbf{u}_{\alpha}-\mathbf{u}\|_{L^{2}(\mathbb{T}^{3})}\|\mathbf{u}_{\alpha}-\mathbf{u}\|_{L^{6}(\mathbb{T}^{3})}\|\nabla\mathbf{u}\|_{L^{3}(\mathbb{T}^{3})}\,\dt
					\\&\leq \nonumber  C(T',D)\|\nabla\mathbf{u}\|_{L^{\infty}(0,T;L^{3}(\mathbb{T}^{3}))} \int_0^\tau\|\mathbf{u}_{\alpha}-\mathbf{u}\|_{L^{2}(\mathbb{T}^{3})}\|\mathbf{u}_{\alpha}-\mathbf{u}\|_{L^{6}(\mathbb{T}^{3})}\,\dt
					\\&\leq  C(T',D)\int_0^\tau\|\mathbf{u}_{\alpha}-\mathbf{u}\|_{L^{2}(\mathbb{T}^{3})}^{2}\,\dt
					+\frac{C_1}{4} \| \mathbf{u}_{\alpha}-\mathbf{u}\|_{L^2(0,\tau;H^{1}(\mathbb{T}^{3}))}^{2}
				\end{align}
				and
				\begin{align}
					\label{mode54-2211}
					H_{9}
					\leq C(T',D)\int_0^\tau\|\partial_{t}\phi\|_{L^{2}(\mathbb{T}^{3})}\|\phi-\phi_{\alpha}\|^{2}_{L^{4}(\mathbb{T}^{3})}\,\dt
					\leq  C(T',D) \int_0^\tau\|\Lambda^s\phi-\Lambda^s\phi_{\alpha}\|_{L^{2}(\bbt^3)}^2\,\dt.
				\end{align}
				Besides, taking $f=\partial _{t}\phi$ in  Lemma \ref{lem:limit 1}  implies
				\begin{align}
					\label{mode54-2212}
					&  H_{10}
					\leq  C(T',D)\alpha.
				\end{align}
				Finally, in view of H\"older's and Young's inequalities, one ends up with
				\begin{align}
					\label{mode54-H11}
					H_{11}
					&\leq \frac{1}{4}\int_0^{\tau}\int_{\mathbb{T}^{3}}|\nabla \mu_{p,\alpha}-\nabla \mu|^{2}\dxdt+C(T',D) \int_0^\tau\|\Lambda^s\phi-\Lambda^s\phi_{\alpha}\|_{L^{2}(\bbt^3)}^2\,\dt.
					\\
					\label{mode54-H12}
					H_{12}&\leq C(T',D)\left( \int_0^\tau\|\Lambda^s\phi-\Lambda^s\phi_{\alpha}\|_{L^{2}(\bbt^3)}^2\,\dt+\int_0^\tau\|\mathbf{u}_{\alpha}-\mathbf{u}\|_{L^{2}(\mathbb{T}^{3})}^{2}\,\dt\right).
				\end{align}
				
				Submitting  \eqref{mode54-224}--\eqref{mode54-H12} into \eqref{mode54-223}, we arrive at
				\begin{align}
					\label{mode54-2213}
					&\nonumber \bigg[ E(\phi_{\alpha},\mathbf{u}_{\alpha}|\phi,\mathbf{u})\bigg]_{t=0}^{t=\tau}
					\\&\nonumber\quad+\frac{1}{2}\int_0^{\tau}\int_{\mathbb{T}^{3}}|\nabla \mu_{p,\alpha}-\nabla \mu|^{2}\dxdt
					+ \frac{ C_1}{2} \| \mathbf{u}_{\alpha}-\mathbf{u}\|_{L^2(0,\tau;H^{1}(\mathbb{T}^{3}))}^{2}
					\\&\nonumber\leq C(T',D)\alpha+C(T',D)\int_0^\tau E(\phi_{\alpha},\mathbf{u}_{\alpha}|\phi,\mathbf{u})(t)\,\dt
					\\&\quad+(C(T',D)+C_2)\int_0^\tau E(\phi_{\alpha},\mathbf{u}_{\alpha}|\phi,\mathbf{u})(t)\,\dt.
				\end{align}
				
				Finally, we  apply the Gr\"onwall's inequality to \eqref{mode54-2213} and  derive 
				\begin{align}
					\nonumber
					&  E(\phi_{\alpha},\mathbf{u}_{\alpha}|\phi,\mathbf{u})(\tau)
					\\&\nonumber\quad+\frac{1}{2}\int_0^{\tau}\int_{\mathbb{T}^{3}}|\nabla \mu_{p,\alpha}-\nabla \mu|^{2}\dxdt
					+\frac{ C_1}{2} \| \mathbf{u}_{\alpha}-\mathbf{u}\|_{L^2(0,\tau;H^{1}(\mathbb{T}^{3}))}^{2}
					\\&
					\label{eqs:Gronwall-E-tilde}
					\leq C(T',D)\bigg(\alpha+ E(\phi_{\alpha},\mathbf{u}_{\alpha}|\phi,\mathbf{u})(0)\bigg).
				\end{align}
				This completes the proof of Theorem \ref{thm:limit}. \qed
				
				\appendix
				\section{Technical Tools}
				\subsection{Weak Neumann Laplace operator}
				This part is adapted from \cite{AH1}, one can also see \cite{FFHL2024}.
				
				Define  the orthogonal projection onto $L^{2}_{(0)}(\mathbb{T}^{3})=\{f\in L^{2}(\mathbb{T}^{3}):\bar{f}\triangleq\frac{1}{|\mathbb{T}^{3}|}\int_{\mathbb{T}^{3}}f(x)\,\mathrm dx=0\}$ by
				\begin{align*}
					P_{0}f\triangleq f-\bar{f}.
				\end{align*}
				Furthermore, we define  a Hilbert space $H_{(0)}^{1}(\mathbb{T}^{3})$ by
				\begin{align*}
					H_{(0)}^{1}(\mathbb{T}^{3})=H^{1}(\mathbb{T}^{3})\cap L^{2}_{(0)}(\mathbb{T}^{3})
				\end{align*}
				with the following inner product
				\begin{align*}
					(c,d)_{H_{(0)}^{1}(\mathbb{T}^{3})}=(\nabla c,\nabla d)_{L^{2}(\mathbb{T}^{3})},\ \  \forall c,d\in H_{(0)}^{1}(\mathbb{T}^{3}).
				\end{align*}

				The weak Neumann-Laplace operator $\Delta:H_{(0)}^{1}(\mathbb{T}^{3}))\rightarrow H_{(0)}^{-1}(\mathbb{T}^{3}))$ is defined by
				\begin{align}
					-\langle\Delta u,\varphi\rangle_{H_{(0)}^{-1}(\mathbb{T}^{3}), H_{(0)}^{1}(\mathbb{T}^{3})}=(\nabla u,\nabla \varphi)_{L^{2}(\mathbb{T}^{3})}, \quad\text{for $u,\varphi\in H_{(0)}^{1}(\mathbb{T}^{3})$},\label{def:wnl}
				\end{align}
				where $H_{(0)}^{-1}(\mathbb{T}^{3})=\big(H_{(0)}^{1}(\mathbb{T}^{3})\big)'$.
				By the Lemma of Lax-Milgram, for every $f\in H_{(0)}^{-1}(\mathbb{T}^{3})$  there is a unique $u\in H_{(0)}^{1}(\mathbb{T}^{3})$
				such that $\Delta u=f$ .

	Moreover, we embed $H_{(0)}^{1}(\mathbb{T}^{3})$ and $L^{2}_{(0)}(\mathbb{T}^{3})$ into $H_{(0)}^{-1}(\mathbb{T}^{3})$ in the standard way by defining
	\begin{align*}
		\langle c,\varphi\rangle_{H_{(0)}^{-1}(\mathbb{T}^{3}),H_{(0)}^{1}(\mathbb{T}^{3})}=\int\limits_{\mathbb{T}^{3}} c(x)\varphi(x)\,\mathrm dx, \quad\text{for $\varphi\in H_{(0)}^{1}(\mathbb{T}^{3}),c\in L^{2}_{(0)}(\mathbb{T}^{3})$},
	\end{align*}
	which gives us a useful inequality
	\begin{align}\label{HHH}
		\|f\|_{L^{2}(\mathbb{T}^{3})}^{2}\leq \|f\|_{H_{(0)}^{1}(\mathbb{T}^{3})} \|f\|_{H_{(0)}^{-1}(\mathbb{T}^{3})},\forall f\in H_{(0)}^{1}(\mathbb{T}^{3}).
	\end{align}
	Furthermore, assuming $\Delta u=f$ for $f\in H_{(0)}^{-1}(\mathbb{T}^{3})$, we have
	\begin{align*}
		\|u\|_{H_{(0)}^{1}(\mathbb{T}^{3})}\leq  \|f\|_{H_{(0)}^{-1}(\mathbb{T}^{3})}.
	\end{align*}

Finally, we define $\mathrm{div}:L^{2}(\mathbb{T}^{3})\rightarrow H_{(0)}^{-1}(\mathbb{T}^{3})$ as
\begin{align}\label{def:div}
	\langle \mathrm{div} f,\varphi \rangle_{H_{(0)}^{-1}(\mathbb{T}^{3}),H_{(0)}^{1}(\mathbb{T}^{3})}=-(f,\nabla\varphi)_{L^{2}(\mathbb{T}^{3})}, \quad\text{for $\varphi\in H_{(0)}^{1}(\mathbb{T}^{3})$},
\end{align}
which and \eqref{def:wnl} imply that $\Delta u=\mathrm{div}\nabla u$ for $u\in  H_{(0)}^{1}(\mathbb{T}^{3})$.

\subsection{Some auxiliary lemmas}
In this part, we show some auxiliary results.
\begin{lemma}\label{lem1}\cite {AH}
	Let $X,Y$ be two Banach spaces satisfying that $Y\hookrightarrow X$ and $X'\hookrightarrow Y'$ densely. For $0<T<\infty$, we have that $L^{\infty}(0,T;Y)\cap BUC([0,T];X)\hookrightarrow BC_{w}([0,T];Y)$.
\end{lemma}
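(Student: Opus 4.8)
The plan is to fix the distinguished representative $f\in BUC([0,T];X)$ of a given element of $L^{\infty}(0,T;Y)\cap BUC([0,T];X)$ and to verify the three defining properties of membership in $BC_{w}([0,T];Y)$: that $f(t)\in Y$ for \emph{every} $t\in[0,T]$ (not merely for almost every $t$), that $\sup_{t\in[0,T]}\|f(t)\|_{Y}<\infty$, and that $t\mapsto\langle g,f(t)\rangle_{Y',Y}$ is continuous on $[0,T]$ for every $g\in Y'$. Write $M\coloneqq\|f\|_{L^{\infty}(0,T;Y)}$, so there is a set $N\subset[0,T]$ of full measure with $f(t)\in Y$ and $\|f(t)\|_{Y}\le M$ for all $t\in N$; also recall that the inclusion $Y\hookrightarrow X$ is dense, so its adjoint gives a continuous injection $X'\hookrightarrow Y'$, which by hypothesis has dense range.

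First I would upgrade the membership $f(t)\in Y$ from almost every $t$ to every $t$. Given $t_{0}\in[0,T]$, pick $t_{n}\in N$ with $t_{n}\to t_{0}$. The sequence $(f(t_{n}))_{n}$ is bounded in $Y$, so, invoking weak sequential compactness of bounded subsets of $Y$, a subsequence satisfies $f(t_{n_{k}})\rightharpoonup\xi$ in $Y$ for some $\xi\in Y$, with $\|\xi\|_{Y}\le\liminf_{k}\|f(t_{n_{k}})\|_{Y}\le M$ by weak lower semicontinuity of the norm. On the other hand $f(t_{n_{k}})\to f(t_{0})$ in $X$ by continuity of $f$ into $X$, whence $\langle g,f(t_{n_{k}})\rangle_{X',X}\to\langle g,f(t_{0})\rangle_{X',X}$ for every $g\in X'$. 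Comparing with the weak-$Y$ convergence gives $\langle g,\xi\rangle=\langle g,f(t_{0})\rangle$ for all $g\in X'$, and since $X'$ separates the points of $X$ we obtain $f(t_{0})=\xi\in Y$ and $\|f(t_{0})\|_{Y}\le M$. Taking the supremum over $t_{0}\in[0,T]$ yields $\sup_{t\in[0,T]}\|f(t)\|_{Y}\le M$, which is the required boundedness; in particular the claimed inclusion is a bounded linear map.

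Next I would prove the weak continuity, which is where the density of $X'$ in $Y'$ is used. Fix $g\in Y'$ and $t_{0}\in[0,T]$. For $\varepsilon>0$ choose $g_{\varepsilon}\in X'$ with $\|g-g_{\varepsilon}\|_{Y'}<\varepsilon$. Using the uniform bound from the previous step,
\[
\bigl|\langle g,f(t)-f(t_{0})\rangle_{Y',Y}\bigr|\le\bigl|\langle g_{\varepsilon},f(t)-f(t_{0})\rangle_{X',X}\bigr|+2\varepsilon M ,
\]
and the first term on the right tends to $0$ as $t\to t_{0}$ because $f$ is (strongly, hence weakly) continuous into $X$. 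Therefore $\limsup_{t\to t_{0}}\bigl|\langle g,f(t)-f(t_{0})\rangle_{Y',Y}\bigr|\le 2\varepsilon M$; letting $\varepsilon\to0$ shows that $t\mapsto\langle g,f(t)\rangle_{Y',Y}$ is continuous at $t_{0}$. As $g\in Y'$ and $t_{0}$ were arbitrary, $f\in C_{w}([0,T];Y)$, and combined with the boundedness established above this gives $f\in BC_{w}([0,T];Y)$.

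The only step that is not pure bookkeeping is the appeal to weak sequential compactness of bounded sets in $Y$; this is where reflexivity of $Y$ (equivalently, via the Eberlein--\v{S}mulian theorem, relative weak sequential compactness of bounded sets) is needed, and it holds in every application in this paper, where $Y$ is a Hilbert or fractional Sobolev space. I expect this to be the essential ingredient, the remaining arguments reducing to the interplay of strong continuity in $X$, the natural continuous inclusion $X'\subset Y'$ dual to $Y\hookrightarrow X$, and the density of $X'$ in $Y'$.
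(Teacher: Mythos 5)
The paper only cites this result from [AH] and gives no proof of its own, so there is nothing to compare your argument against directly; what you have written is the standard argument and each individual step is carried out correctly. The one point that deserves emphasis is precisely the one you flag yourself at the end: the first step, which upgrades $f(t)\in Y$ from \emph{almost every} $t$ to \emph{every} $t$, relies on weak sequential compactness of bounded sets in $Y$, i.e., on reflexivity of $Y$ (or, equivalently via Eberlein--\v{S}mulian, on relative weak sequential compactness of bounded sets; alternatively one may assume $Y$ is a separable dual and work in the weak-$*$ topology). This is not a removable technicality, because the lemma is in fact \emph{false} as stated for general Banach $Y$. For instance, take $Y=c_0$ with the supremum norm and $X=\ell^2(n^{-2})$, the weighted $\ell^2$ space with weights $n^{-2}$; then $c_0\hookrightarrow X$ densely and $X'=\ell^2(n^{2})\hookrightarrow\ell^1=Y'$ densely, so the hypotheses hold. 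Let $f:[0,1]\to X$ be the continuous path with $f(0)=(1,1,1,\dots)$ and $f(1/n)$ the sequence with $1$ in the first $n$ entries and $0$ afterwards, linearly interpolated on each $[1/(n+1),1/n]$. Then $f\in BUC([0,1];X)$, $f(t)\in c_0$ with $\|f(t)\|_{\infty}\le 1$ for every $t>0$, so $f\in L^\infty(0,1;c_0)$, yet $f(0)\notin c_0$, so $f\notin BC_w([0,1];c_0)$.

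In short: your proof is correct, but the reflexivity (or separable-dual) assumption you invoke should really be added to the hypotheses of the lemma as it appears here; in every application in the paper $Y$ is a Hilbert or fractional Sobolev space, so the conclusion you draw is the one actually needed. The rest of the argument---identifying the weak-$Y$ limit with the strong-$X$ limit through the embedding $X'\subset Y'$, deducing the uniform bound $\sup_{t}\|f(t)\|_Y\le\|f\|_{L^\infty(0,T;Y)}$, and the $\varepsilon$-approximation by elements of $X'$ to pass from strong $X$-continuity to weak $Y$-continuity---is complete and needs no repair.
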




Similar to \cite[Lemma 4.2]{AH}, we can also define the very weak solution of Laplace operator in $\mathbb{T}^{3}$.
\begin{lemma}\label{lem3}
	Let $1<p<\infty$ and $\frac{1}{p}+\frac{1}{p'}=1$. Then for every $f\in (W_{(0)}^{2,p'}(\mathbb{T}^{3}))'$, we can find a unique solution $u\in L^{p}(\mathbb{T}^{3})$ such that
	\begin{align*}
		&(u,\Delta\varphi)=\inner{f}{\varphi}_{(W_{(0)}^{2,p'}(\mathbb{T}^{3}))',W_{(0)}^{2,p'}(\mathbb{T}^{3})}
	\end{align*}
	for all $\varphi\in W_{(0)}^{2,p'}(\mathbb{T}^{3}):=\{f\in \varphi\in W^{2,p'}(\mathbb{T}^{3}):\bar{f}=0\}$. Moreover, there exists a positive constant  $C_{p}$ such that
	\begin{align*}
		& \|u\|_{L^{p}(\mathbb{T}^{3})}\leq C_{p}\|f\|_{(W_{(0)}^{2,p'}(\mathbb{T}^{3}))'}.
	\end{align*}
\end{lemma}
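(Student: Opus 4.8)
The plan is to prove this by the standard transposition (duality) argument, exactly parallel to \cite[Lemma 4.2]{AH}. The single external ingredient is the classical $L^{q}$-theory for the Laplacian on the torus: for every $1<q<\infty$ the map $\Delta\colon W_{(0)}^{2,q}(\mathbb{T}^{3})\to L_{(0)}^{q}(\mathbb{T}^{3})$ is a bounded isomorphism, with the two-sided estimate $\|\varphi\|_{W^{2,q}(\mathbb{T}^{3})}\le C_{q}\|\Delta\varphi\|_{L^{q}(\mathbb{T}^{3})}$ for $\varphi\in W_{(0)}^{2,q}(\mathbb{T}^{3})$. This follows from the boundedness of the Fourier multipliers $k_{i}k_{j}/|k|^{2}$ on $L^{q}(\mathbb{T}^{3})$ (Marcinkiewicz/Mikhlin multiplier theorem, or transference from $\mathbb{R}^{3}$) together with the invertibility of $-\Delta$ on mean-zero periodic functions. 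I would state this as the only fact used and denote the inverse by $N_{q}\colon L_{(0)}^{q}(\mathbb{T}^{3})\to W_{(0)}^{2,q}(\mathbb{T}^{3})$.

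First I would set $q=p'$ and define the functional $\ell$ on $L_{(0)}^{p'}(\mathbb{T}^{3})$ by $\ell(g)\coloneqq\inner{f}{N_{p'}g}_{(W_{(0)}^{2,p'}(\mathbb{T}^{3}))',W_{(0)}^{2,p'}(\mathbb{T}^{3})}$. Boundedness is immediate since $|\ell(g)|\le\|f\|_{(W_{(0)}^{2,p'}(\mathbb{T}^{3}))'}\|N_{p'}g\|_{W^{2,p'}(\mathbb{T}^{3})}\le C_{p'}\|f\|_{(W_{(0)}^{2,p'}(\mathbb{T}^{3}))'}\|g\|_{L^{p'}(\mathbb{T}^{3})}$, so $\ell\in(L_{(0)}^{p'}(\mathbb{T}^{3}))'$ with norm at most $C_{p'}\|f\|_{(W_{(0)}^{2,p'}(\mathbb{T}^{3}))'}$. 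Since $L_{(0)}^{p'}(\mathbb{T}^{3})$ is a closed subspace of $L^{p'}(\mathbb{T}^{3})$, its dual is canonically identified with $L^{p}(\mathbb{T}^{3})$ modulo constants, i.e.\ with $L_{(0)}^{p}(\mathbb{T}^{3})$, so there is a unique $u\in L_{(0)}^{p}(\mathbb{T}^{3})$ with $\int_{\mathbb{T}^{3}}u g\,\dx=\ell(g)$ for all $g\in L_{(0)}^{p'}(\mathbb{T}^{3})$ and $\|u\|_{L^{p}(\mathbb{T}^{3})}=\|\ell\|\le C_{p}\|f\|_{(W_{(0)}^{2,p'}(\mathbb{T}^{3}))'}$. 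To recover the stated identity I would then take an arbitrary $\varphi\in W_{(0)}^{2,p'}(\mathbb{T}^{3})$ and put $g\coloneqq\Delta\varphi\in L_{(0)}^{p'}(\mathbb{T}^{3})$, so that $N_{p'}g=\varphi$ and hence $(u,\Delta\varphi)=\int_{\mathbb{T}^{3}}u\,\Delta\varphi\,\dx=\ell(\Delta\varphi)=\inner{f}{\varphi}_{(W_{(0)}^{2,p'}(\mathbb{T}^{3}))',W_{(0)}^{2,p'}(\mathbb{T}^{3})}$, together with the asserted bound.

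For uniqueness (understood modulo additive constants, or with the normalisation $\bar u=0$) I would argue that if $u_{1},u_{2}$ both satisfy the identity, then $\int_{\mathbb{T}^{3}}(u_{1}-u_{2})\Delta\varphi\,\dx=0$ for all $\varphi\in W_{(0)}^{2,p'}(\mathbb{T}^{3})$; since $\Delta$ maps $W_{(0)}^{2,p'}(\mathbb{T}^{3})$ onto all of $L_{(0)}^{p'}(\mathbb{T}^{3})$ this forces $\int_{\mathbb{T}^{3}}(u_{1}-u_{2})g\,\dx=0$ for every $g\in L_{(0)}^{p'}(\mathbb{T}^{3})$, and a mean-zero element of $L^{p}(\mathbb{T}^{3})$ annihilating $L_{(0)}^{p'}(\mathbb{T}^{3})$ must vanish.

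The argument is essentially routine; the only genuine input is the $W^{2,q}$-isomorphism property of $-\Delta$ on mean-zero periodic functions, so the main point to get right is to cite (or, if a self-contained proof is preferred, verify via the torus multiplier theorem) that fact cleanly, and to be careful about the duality $(L_{(0)}^{p'}(\mathbb{T}^{3}))'\cong L_{(0)}^{p}(\mathbb{T}^{3})$ with the attendant ``uniqueness up to additive constants'' caveat — which is harmless in every later application, since the pressure $p_{1,\delta}$ (resp.\ $p_{\alpha}$) to which Lemma \ref{lem3} is applied is always normalised to have zero spatial mean.
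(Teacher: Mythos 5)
Your proof is correct, and it is the standard transposition (duality) argument that the paper simply alludes to by citing the analogue \cite[Lemma 4.2]{AH} rather than giving details; the ingredients (the $L^{q}$-isomorphism $\Delta:W_{(0)}^{2,q}(\mathbb{T}^{3})\to L_{(0)}^{q}(\mathbb{T}^{3})$ via torus Fourier multipliers, the duality $(L_{(0)}^{p'})'\cong L_{(0)}^{p}$, and pulling back along $N_{p'}$) are exactly what one expects. Your remark that uniqueness holds only up to constants (equivalently, under the normalisation $\bar u=0$) is a fair and harmless caveat on the way the lemma is phrased, since every later application is to mean-zero pressures.
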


Next, we show following important results about $\Lambda^{2s}$ that will used to show the existence of approximate solutions.
\begin{lemma}\label{lem5}
	The pseudo-differential operator $\Lambda^{2s}:H^{s}\rightarrow H^{-s}$ is of type M (see \cite{SRE}). For $\lambda>0$, we can derive that $\lambda+\Lambda^{2s}:H^{s}\rightarrow H^{-s}$ is bijective with strongly continuous inverse.
\end{lemma}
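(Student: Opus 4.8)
\textbf{Proof proposal for Lemma \ref{lem5}.}

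The plan is to establish the two assertions in sequence, first that $\Lambda^{2s}\colon H^s(\mathbb{T}^3)\to H^{-s}(\mathbb{T}^3)$ is monotone, coercive, bounded and hemicontinuous (hence of type M, since a bounded monotone hemicontinuous operator is of type M, cf.\ \cite{SRE}), and then that the shifted operator $\lambda+\Lambda^{2s}$ is bijective with strongly continuous inverse via a Lax--Milgram / monotone-operator argument on the Fourier side. The proofs are most transparent in terms of Fourier coefficients on $\mathbb{T}^3$, so I would work throughout with the representation $\widehat{\Lambda^{2s}\phi}(k)=|k|^{2s}\widehat\phi(k)$ recorded in the introduction.

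First I would record the basic mapping and duality facts. For $\phi\in H^s(\mathbb{T}^3)$ and $\psi\in H^s(\mathbb{T}^3)$, Parseval gives
\begin{align*}
	\langle \Lambda^{2s}\phi,\psi\rangle_{H^{-s},H^s}
	= \sum_{k\in\mathbb{Z}^3} |k|^{2s}\,\widehat\phi(k)\,\overline{\widehat\psi(k)}
	= (\Lambda^s\phi,\Lambda^s\psi)_{L^2(\mathbb{T}^3)},
\end{align*}
which is finite since $|k|^{2s}|\widehat\phi(k)||\widehat\psi(k)| \le \tfrac12(|k|^{2s}|\widehat\phi(k)|^2+|k|^{2s}|\widehat\psi(k)|^2)$ and both summands are controlled by $\|\phi\|_{H^s}^2$, $\|\psi\|_{H^s}^2$; this shows $\Lambda^{2s}$ is bounded $H^s\to H^{-s}$. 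Monotonicity is immediate: $\langle \Lambda^{2s}\phi-\Lambda^{2s}\eta,\phi-\eta\rangle = \|\Lambda^s(\phi-\eta)\|_{L^2}^2 \ge 0$. Hemicontinuity (indeed linearity plus boundedness already gives continuity, hence hemicontinuity) follows trivially since $\Lambda^{2s}$ is a bounded linear map. Together these properties imply that $\Lambda^{2s}$ is of type M; I would cite the standard fact (e.g.\ \cite{SRE}) that a bounded, monotone, hemicontinuous operator between a reflexive Banach space and its dual is of type M.

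For the second assertion, fix $\lambda>0$ and consider the bilinear form $a(\phi,\psi)\coloneqq \lambda(\phi,\psi)_{L^2}+(\Lambda^s\phi,\Lambda^s\psi)_{L^2}$ on $H^s(\mathbb{T}^3)\times H^s(\mathbb{T}^3)$. On the Fourier side $a(\phi,\phi)=\sum_k(\lambda+|k|^{2s})|\widehat\phi(k)|^2$, and since $\lambda+|k|^{2s}\approx (1+|k|^2)^s$ uniformly in $k$ (with constants depending on $\lambda,s$), one has $a(\phi,\phi)\approx \|\phi\|_{H^s}^2$; boundedness $|a(\phi,\psi)|\lesssim\|\phi\|_{H^s}\|\psi\|_{H^s}$ is equally clear. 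By the Lax--Milgram lemma, for every $f\in H^{-s}(\mathbb{T}^3)$ there is a unique $\phi\in H^s(\mathbb{T}^3)$ with $a(\phi,\psi)=\langle f,\psi\rangle$ for all $\psi\in H^s$, i.e.\ $(\lambda+\Lambda^{2s})\phi=f$; hence $\lambda+\Lambda^{2s}$ is bijective. The explicit inverse is the Fourier multiplier with symbol $(\lambda+|k|^{2s})^{-1}$, and the coercivity estimate gives $\|\phi\|_{H^s}\le C(\lambda,s)\|f\|_{H^{-s}}$, so the inverse is bounded; being linear and bounded it is in particular strongly continuous. This completes the proof.

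I do not anticipate a genuine obstacle here — the statement is essentially a bookkeeping exercise with the Fourier multiplier $|k|^{2s}$. The only point requiring a little care is the precise definition of ``type M'' one adopts and matching it to the cited reference \cite{SRE}; depending on that definition one either invokes the monotone+hemicontinuous+bounded $\Rightarrow$ type M implication directly, or verifies the defining condition ($\phi_n\rightharpoonup\phi$, $\Lambda^{2s}\phi_n\rightharpoonup \chi$, $\limsup\langle\Lambda^{2s}\phi_n,\phi_n\rangle\le\langle\chi,\phi\rangle$ imply $\chi=\Lambda^{2s}\phi$) by hand, which again follows immediately from weak lower semicontinuity of $\phi\mapsto\|\Lambda^s\phi\|_{L^2}^2$ together with the linearity of $\Lambda^{2s}$.
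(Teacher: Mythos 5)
Your proposal is correct and reaches the same conclusions, but the route for bijectivity is different from the paper's. The paper stays entirely inside the monotone-operator framework of Showalter's book: after establishing that $\Lambda^{2s}$ is monotone and hemicontinuous (the paper's hemicontinuity computation, $\langle \Lambda^{2s}(u+(h+t)v),v\rangle-\langle \Lambda^{2s}(u+tv),v\rangle= h\|\Lambda^s v\|_{L^2}^2\leq h\|v\|_{H^s}^2$, is just your observation that a bounded linear map is trivially hemicontinuous), it quotes the type-M criterion from \cite{SRE}, then shows $\lambda+\Lambda^{2s}$ is coercive and invokes the Browder--Minty surjectivity theorem (again \cite{SRE}, Lemma 2.1 and Corollary 2.2 in Chapter II) plus strict monotonicity to get bijectivity, and finally verifies strong continuity of the inverse by the standard coercivity estimate $\|u_n-u\|_{H^s}^2\lesssim\langle f_n-f,u_n-u\rangle$. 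You instead pass to the Fourier side and apply Lax--Milgram to the bilinear form $a(\phi,\psi)=\lambda(\phi,\psi)_{L^2}+(\Lambda^s\phi,\Lambda^s\psi)_{L^2}$, noting the explicit multiplier $(\lambda+|k|^{2s})^{-1}$ and reading off boundedness of the inverse from coercivity. Both are valid; your Lax--Milgram argument is more elementary and self-contained (it does not require the type-M machinery at all for the bijectivity part), while the paper's choice keeps the proof stylistically uniform with the abstract monotone-operator framing that the type-M assertion already demands.
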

\begin{proof}
	We can easily show  for $u,v \in H^{s}(\mathbb{T}^{3})$
	\begin{align*}
		\langle \Lambda^{2s}u-\Lambda^{2s}v,u-v \rangle_{H^{-s},H^{s}}=\|\Lambda^{s}(u-v)\|_{L^{2}}^2\geq0
	\end{align*}
	and
	\begin{align*}
		&\langle \Lambda^{2s}(u+(h+t)v),v \rangle_{H^{-s},H^{s}}-\langle \Lambda^{2s}(u+tv),v \rangle_{H^{-s},H^{s}}
		\leq h\| v\|_{H^{s}(\mathbb{T}^{3})}^{2}  , \text{ for } h>0.
	\end{align*}
	Hence, we can prove that  $\Lambda^{2s}$ is of type M by  \cite[Lemma 2.1 in Chapter II]{SRE}.

	In order to prove that $\lambda+\Lambda^{2s}$ is bijective. We first verify that
	\begin{align*}
		\lambda\langle u,u\rangle_{H^{-s},H^{s}}+\langle \Lambda^{2s}u,u \rangle_{H^{-s},H^{s}}=\lambda(u,u)_{L^{2}}+(\Lambda^{s}u,\Lambda^{s}u)_{L^{2}}\gtrsim \| u\|_{H^{s}(\mathbb{T}^{3})}^{2},
	\end{align*}
	hence we can get that $\lambda+\Lambda^{2s}$ is coercive. By \cite[Lemma 2.1 and Corollary 2.2 in Chapter II]{SRE}, we then deduce that $\lambda+\Lambda^{2s}$ is surjective. Besides, we can prove that
	\begin{align*}
		\langle \Lambda^{2s}u-\Lambda^{2s}v,u-v \rangle_{H^{-s},H^{s}}=0 \ \text{if only if $u=v$.}
	\end{align*}
	Combining the above results, the proof of the first part of this lemma is completed.

	Now we begin to show strongly continuous inverse of $\lambda+\Lambda^{2s}$. We assume that
	$(\lambda+\Lambda^{2s})u_{n}=f_{n},(\lambda+\Lambda^{2s})u=f$ and $f_{n}\rightarrow f$ in $H^{-s}(\mathbb{T}^{3})$, then we have
	\begin{align*}
		\| u_{n}-u\|_{H^{s}(\mathbb{T}^{3})}^{2}&\lesssim\lambda\langle u_{n}-u,u_{n}-u\rangle_{H^{-s},H^{s}}+\langle \Lambda^{2s}u_{n}-\Lambda^{2s}u,u_{n}-u \rangle_{H^{-s},H^{s}}
		\\&=\langle f_{n}-f,u_{n}-u \rangle_{H^{-s},H^{s}}\rightarrow0(n\rightarrow\infty),
	\end{align*}
	i.e., $u_{n}\rightarrow u$ in $H^{s}(\mathbb{T}^{3})$.

	Hence, we get the desired results.
\end{proof}

The following lemma ensures the energy inequality of the limit passage.
\begin{lemma}\label{lem4}\cite{AH}
	Let $E:[0,T)\rightarrow[0,\infty)$ be a lower semi-continuous function and $D:(0,T)\rightarrow [0,\infty)$ be a integrable function, $0<T\leq\infty$. Then
	\begin{align*}
		&E(0)\varphi(0)+\int_0^{T}E(t)\varphi '(t)dt\geq \int_0^{T}D(t)\varphi(t)dt
	\end{align*}
	holds for all $\varphi\in W^{1,1}(0,T)$ with $\varphi(T)=0$ if and only if
	\begin{align*}
		&E(t)+\int_s^{t}D(\tau)d\tau\leq E(s)
	\end{align*}
	for almost all $0\leq s\leq t<T$ including $s=0$.
\end{lemma}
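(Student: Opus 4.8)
The plan is to treat this as the classical equivalence between a distributional and an almost‑everywhere pointwise energy inequality, adapting the argument in \cite{AH}. Throughout I read the test functions $\varphi$ as nonnegative --- this is what is used in every application (e.g.\ \eqref{im13-1}), and for $T=\infty$ the condition $\varphi(T)=0$ is automatic for $\varphi\in W^{1,1}$. The first move is to introduce $h(t)\coloneqq E(t)+\int_0^t D(\tau)\,\d\tau$: since $D\in L^1$, the map $t\mapsto\int_0^t D$ is continuous, and integrating by parts in $\int_0^T D\varphi$ (using $\varphi(T)=0$ and $\int_0^0D=0$) shows that the integral inequality in the statement is equivalent to
\[
E(0)\,\varphi(0)+\int_0^T h(t)\,\varphi'(t)\,\d t\ \geq\ 0
\]
for all nonnegative $\varphi\in W^{1,1}(0,T)$ with $\varphi(T)=0$, whereas the claimed pointwise inequality is equivalent to: $h$ is essentially non‑increasing on $[0,T)$ and $h(t)\leq E(0)$ for a.e.\ $t$. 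So the whole lemma reduces to proving that the displayed functional inequality holds if and only if $h$ has these two properties.

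For the implication ``$h$ essentially non‑increasing and $h\leq E(0)$ a.e.\ $\Rightarrow$ functional inequality'' I would pass to a genuine non‑increasing representative $\tilde h$ with $\tilde h=h$ a.e., whose distributional derivative $\d\tilde h$ is a non‑positive, locally finite measure, and integrate by parts in the Stieltjes sense: $\int_0^T\tilde h\,\varphi'=-\tilde h(0^+)\,\varphi(0)-\int_{(0,T)}\varphi\,\d\tilde h$, where the boundary term at $T$ drops out because $\varphi(T)=0$ (and, when $T=\infty$, because $0\leq\tilde h$ is bounded). Since $\varphi\geq0$ and $\d\tilde h\leq0$ the last integral is $\geq0$, so the functional is bounded below by $\varphi(0)\bigl(E(0)-\tilde h(0^+)\bigr)$; and $\tilde h(0^+)=\esssup_{t>0}\tilde h(t)\leq E(0)$ by the hypothesis $h\leq E(0)$ a.e., which closes this direction because $\varphi(0)\geq0$.

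For the converse I would test the displayed inequality with Lipschitz approximations of indicators. Taking $\varphi_\varepsilon\equiv1$ on $[0,t]$, decreasing linearly to $0$ on $[t,t+\varepsilon]$, and $\equiv0$ afterwards gives $\varepsilon^{-1}\int_t^{t+\varepsilon}h\leq E(0)$; since $E$ (hence $h$) is nonnegative and Borel, this already forces $h\in L^1_{\mathrm{loc}}([0,T))$ and, at Lebesgue points, $h(t)\leq E(0)$, i.e.\ $E(t)+\int_0^tD\leq E(0)$ --- and lower semicontinuity of $E$ even upgrades this to every $t$, covering the ``including $s=0$'' clause. Then, for $0<s\leq t$, testing with $\varphi_\varepsilon$ that rises from $0$ to $1$ on $[s-\varepsilon,s]$, equals $1$ on $[s,t]$, and drops from $1$ to $0$ on $[t,t+\varepsilon]$ gives $\varepsilon^{-1}\int_{s-\varepsilon}^s h\geq\varepsilon^{-1}\int_t^{t+\varepsilon}h$, and letting $\varepsilon\to0$ along Lebesgue points of $h$ yields $h(s)\geq h(t)$, that is $E(t)+\int_s^tD\leq E(s)$ for a.e.\ $0\leq s\leq t$.

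The step I expect to need the most care is the a priori regularity of $E$: it is assumed only lower semicontinuous, so one may not invoke Lebesgue differentiation until the bound $\varepsilon^{-1}\int_t^{t+\varepsilon}h\leq E(0)$ has been used to secure $h\in L^1_{\mathrm{loc}}$, and likewise the passage from ``essentially non‑increasing'' to an honest monotone representative --- together with the identification of the left boundary value $\tilde h(0^+)$ --- must precede the Stieltjes integration by parts. Beyond this measure‑theoretic bookkeeping (and the harmless convention $\varphi\geq0$), the argument is elementary and uses nothing about the PDE.
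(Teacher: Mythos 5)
The paper does not reproduce a proof of this lemma --- it is stated with the citation \cite{AH} and used as a black box. Your proof is correct and, as far as I can tell, follows essentially the same route as the proof in \cite{AH}: reduce to the statement that $h(t)\coloneqq E(t)+\int_0^t D$ is essentially non-increasing with $\esssup h\le E(0)$, test the functional inequality against trapezoidal Lipschitz functions together with Lebesgue differentiation in one direction, and pass to a monotone representative and do a Stieltjes integration by parts (with the boundary term at $T$ killed by $\varphi(T)=0$, resp.\ boundedness of $\tilde h$ when $T=\infty$) in the other. Your side remarks are also on point: the nonnegativity of $\varphi$ is implicitly assumed (and is used in the paper, cf.\ \eqref{im11-1}, \eqref{im13-1}); the bound $\varepsilon^{-1}\int_t^{t+\varepsilon}h\le E(0)$ is what secures $h\in L^1_{\mathrm{loc}}$ before Lebesgue points can be invoked; the identification $\tilde h(0^+)=\lim_{t\downarrow 0}\tilde h(t)\le E(0)$ follows by taking the limit along points where $\tilde h=h$; and lower semicontinuity of $E$ plus continuity of $t\mapsto\int_0^t D$ upgrade $h(t)\le E(0)$ from a.e.\ $t$ to every $t$, which is what ``including $s=0$'' buys.
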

The following lemma shows that for given $R,\theta>0$, we can obtain $-1-\theta<\phi<1+\theta$ under the condition $E_{\text{free}}(\phi)\leq R$ with a suitable extension of $F$, see also \cite{AH}.
\begin{lemma}\label{lem2}
	Let $R,\theta>0$, $s>\frac{3}{2}$. Then there is an extension $F\in C^{3}(\mathbb{R})$ fulfilling $F(\phi)\geq0$, $F''(\phi)\geq -M> -\infty$ such that such that 
	\begin{align*}
		-1-\theta<\phi<1+\theta
	\end{align*}
	if $ \frac{1}{|\bbt^3|} \int_{\bbt^3} \phi \,\dx=:\overline{\phi} \in(-\infty,+\infty)$  and 
	\begin{align}
		& \int_{\mathbb{T}^{3}}\Big( F(\phi)+\frac{1}{2}|\Lambda^{s} \phi|^2\Big)\, \mathrm dx\leq R.\label{condition}
	\end{align}
\end{lemma}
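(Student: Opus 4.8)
The plan is to build the extension $F$ so that it is huge outside a small neighbourhood of $[-1,1]$, so that the bound \eqref{condition} on the free energy forces $\phi$ into $(-1-\theta,1+\theta)$; the one genuinely delicate point will be to untangle a circular dependence between how large $F$ has to be and how regular $\phi$ is.

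First I would extract the a priori information that is automatic from \eqref{condition}. Since the extension will satisfy $F\ge 0$, \eqref{condition} gives $\|\Lambda^s\phi\|_{L^2(\mathbb{T}^3)}^2\le 2R$. Because $\phi-\overline\phi$ has zero mean and $1+|k|^{2s}\le 2|k|^{2s}$ for $|k|\ge 1$, this yields $\|\phi-\overline\phi\|_{H^s(\mathbb{T}^3)}\le 2\sqrt R$, and then the embedding $H^s(\mathbb{T}^3)\hookrightarrow C^{0,\beta}(\mathbb{T}^3)$ — valid for some $\beta\in(0,1]$ since $s>3/2$, cf. \eqref{BB1}--\eqref{BB3} — gives $\|\phi-\overline\phi\|_{C^{0,\beta}(\mathbb{T}^3)}\le C_S\sqrt R$, with $C_S$ depending only on $s$ and $\mathbb{T}^3$. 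The key observation is that, in order to bound $\overline\phi$, it is enough to require only the cheap lower bound $F\ge R/|\mathbb{T}^3|+1$ on $\{\,|\sigma|\ge 1+\theta/2\,\}$: if $|\overline\phi|>1+\theta/2+C_S\sqrt R$ then $|\phi|>1+\theta/2$ everywhere, hence $\int_{\mathbb{T}^3}F(\phi)>R$, contradicting \eqref{condition}. Therefore $\|\phi\|_{C^{0,\beta}(\mathbb{T}^3)}\le \mathcal R(R,\theta)$ with $\mathcal R$ depending only on $R$, $\theta$ and the fixed embedding constants, hence in particular not on whatever further freedom is left in $F$.

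With this $C^{0,\beta}$-bound secured, I would convert a hypothetical pointwise violation into a violation on a set of positive measure: setting $r_0:=\min\{(\theta/(2\mathcal R))^{1/\beta},r_1\}$, where $r_1>0$ is small enough that a ball of radius $r_1$ is contained in a fundamental domain of $\mathbb{T}^3$, if some continuous representative of $\phi$ satisfied $\phi(x_0)\ge 1+\theta$ at some $x_0$, then $\phi\ge 1+\theta/2$ on $B_{r_0}(x_0)$, so $|\{\phi\ge 1+\theta/2\}|\ge c_1:=|B_{r_0}|>0$, and $c_1$ depends only on $R$ and $\theta$; the case $\phi(x_0)\le -1-\theta$ is symmetric. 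Only at this point would I fix the large constant: put $m:=\max\{R/c_1,\ R/|\mathbb{T}^3|\}+1$ and take $F\in C^3(\mathbb{R})$ equal to a prescribed nonnegative double well on $[-1-\theta/4,1+\theta/4]$, identically equal to $m$ on $\{\,|\sigma|\ge 1+\theta/2\,\}$, and a smooth monotone nonnegative interpolation on the two transition intervals; since $F$ is $C^3$ and the transition is confined to a compact set, $F''\ge -M$ for some finite $M=M(R,\theta)$, while $F\ge 0$ and $F\ge R/|\mathbb{T}^3|+1$ outside the core, so the first two reductions apply to it. Combining $F\ge m$ on $\{\,|\sigma|\ge 1+\theta/2\,\}$ with the measure bound then gives $R\ge\int_{\mathbb{T}^3}F(\phi)\ge m\,c_1>R$ in the offending case, a contradiction; hence $-1-\theta<\phi<1+\theta$.

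The hard part is precisely the circularity just avoided: the size $m$ imposed on $F$ outside $[-1-\theta/2,1+\theta/2]$ is dictated by $c_1$, which is governed by the $C^{0,\beta}$-bound $\mathcal R$ on $\phi$, which a priori presupposes that $F$ already controls $\overline\phi$. The resolution is that controlling $\overline\phi$ — and therefore $\mathcal R$ and $c_1$ — needs only the cheap threshold $F>R/|\mathbb{T}^3|$ outside the core, so one may first pin down $\mathcal R$ and $c_1$ and only afterwards select the genuinely large $m$. The remaining points are routine: one checks the transition branch of $F$ can be taken $C^3$, nonnegative, with second derivative bounded below; and if coercivity of $F$ is wanted for later use, the constant branch $m$ may be replaced by $m+(\sigma^2-1)^2$ without affecting any of the above.
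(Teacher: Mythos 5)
Your argument is correct, and it follows the same core strategy the paper gestures at (obtain an $H^s$-bound on $\phi$, upgrade to a H\"older bound via $H^s(\mathbb{T}^3)\hookrightarrow C^{0,\beta}$ for $s>\tfrac32$, then choose $F$ large on $\{|\sigma|\ge 1+\theta/2\}$ so that a pointwise excursion beyond $1+\theta$ would force $\int F(\phi)>R$); the paper simply writes the bound $\|\phi\|_{H^s}^2\le|\mathbb{T}^3|^2|\overline\phi|^2+4R$ and then defers the rest to ``mimicking [AH, Lemma 2.3].'' What you do \emph{differently}, and what is genuinely valuable, is to resolve explicitly the circularity that the paper leaves unaddressed: the H\"older constant from the displayed bound depends on $\overline\phi$, so the radius of the ball on which $\phi\ge 1+\theta/2$, and hence the size $m$ that $F$ must exceed, would a priori depend on $\overline\phi$ — yet $F$ has to be fixed before $\phi$ is seen, and the lemma as stated allows $\overline\phi$ to be any real number. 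Your two-tier argument (first use only the cheap threshold $F\ge R/|\mathbb{T}^3|+1$ outside $[-1-\theta/2,1+\theta/2]$ together with $\|\phi-\overline\phi\|_{C^0}\lesssim\sqrt R$ to bound $|\overline\phi|$, which pins down $\mathcal R$ and $c_1$ independently of $F$; only then fix the genuinely large plateau $m$) is exactly the missing step needed to make the lemma true in the stated generality. In the paper's actual \emph{usage} the circularity is moot because the mean $\overline\phi=\overline\phi_0$ is conserved and hence fixed by the initial data, but your proof is the one that matches the lemma's hypotheses as written, and the construction of the extension (nonnegative $C^3$, constant plateau $m$ outside the transition band, so that $F''\ge-M$ with $M=M(R,\theta)$, compatible with Assumption~\ref{ass:main} via $\Phi=F+\tfrac{M}{2}\sigma^2$) is sound.
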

\begin{proof}
	Firstly, we can note  that
	\begin{align*} \|\phi\|^2_{L^{2}(\mathbb{T}^{3})}&=|\hat{\phi}(0)|^2+\sum_{k\in \mathbb{Z}^3,|k|\geq 1}|\hat{\phi}(k)|^2
		\\&=\absm{\bbt^3}^2|\overline{\phi}|^{2}+\sum_{k\in \mathbb{Z}^3,|k|\geq 1}|\hat{\phi}(k)|^2
		\\&\leq \absm{\bbt^3}^2|\overline{\phi}|^{2}+\sum_{k\in \mathbb{Z}^3,|k|\geq 1}|k|^{2s}|\hat{\phi}(k)|^2
		\\&= \absm{\bbt^3}^2|\overline{\phi}|^{2}+\int_{\mathbb{T}^{3}}|\Lambda^{s} \phi|^2dx
	\end{align*}
	which combined with \eqref{condition} implies
	\begin{align*}
		\|\phi\|^2_{L^{2}(\mathbb{T}^{3})}&\leq\absm{\bbt^3}^2|\overline{\phi}|^{2}+2R.
	\end{align*}
	Therefore $\phi\in H^{s}(\mathbb{T}^{3})$ and $\|\phi\|^2_{H^{s}(\mathbb{T}^{3})}\leq\absm{\bbt^3}^2|\overline{\phi}|^{2}+4R$. Mimicking the argument in \cite[Lemma 2.3]{AH} with the fact $H^{s}(\mathbb{T}^{3})\hookrightarrow C^{\alpha}(\mathbb{T}^{3})$ with $\alpha=s-\frac{3}{2}$ due to $s>\frac{3}{2}$, we then complete the proof of this lemma.
\end{proof}

\section*{Acknowledgments}

This work was partially done when Y. Liu visited School of Mathematics and Statistics of Anhui Normal University. The hospitality is gratefully appreciated.

M. Fei is partially supported by NSF of China under Grant No. 11871075, 11971357, and NSF of Anhui Province of China under Grant No. 2308085J10. Y. Liu is partially supported by the startup funding from Nanjing Normal University, the NSF of Jiangsu Province  of China under Grant No.~BK20240572, the NSF of Jiangsu Higher Education Institutions of China under Grant No.~24KJB110020, and the China Postdoctoral Science Foundation under Grant No.~2025M773078. The support is gratefully acknowledged.

\section*{Compliance with Ethical Standards}
\subsection*{Date avability}
Data sharing not applicable to this article as no datasets were generated during the current study.
\subsection*{Conflict of interest}
The authors declare that there are no conflicts of interest.

\end{document}